\theoremstyle{plain}
\newtheorem{theo*}{Theorem}
\newtheorem{cor*}[theo*]{Corollary}
\newtheorem{prop*}[theo*]{Proposition}
\newtheorem{theo}{Theorem}[section]
\newtheorem{cor}[theo]{Corollary}
\newtheorem{prop}[theo]{Proposition}
\newtheorem{lem}[theo]{Lemma}
\theoremstyle{remark}
\newtheorem{rem}[theo]{Remark}
\newcommand{\ca}{{\mathcal A}}
\newcommand{\ck}{{\mathcal K}}
\newcommand{\cl}{{\mathcal L}}
\newcommand{\cn}{{\mathcal N}}
\newcommand{\cm}{{\mathcal M}}
\newcommand{\Tau}{{\mathcal T}}
\newcommand{\D}{{\mathbb D}}
\newcommand{\E}{{\mathbb E}}
\newcommand{\N}{{\mathbb N}}
\renewcommand{\P}{{\mathbb P}}
\newcommand{\R}{{\mathbb R}}
\newcommand{\rE}{{\rm E}}
\newcommand{\rN}{{\rm N}}
\newcommand{\rP}{{\rm P}}
\newcommand{\ind}{{\bf 1}}
\newcommand{\Li}{\text{Li}}
\newcommand{\lam}{\lambda}
\newcommand{\ebt}{\mathrm{e}^{2 \beta \theta t}}
\newcommand{\ebs}{\mathrm{e}^{2 \beta \theta s}}
\newcommand{\embt}{\mathrm{e}^{-2 \beta \theta t}}
\newcommand{\embs}{\mathrm{e}^{-2 \beta \theta s}}
\newcommand{\eb}[1]{\mathrm{e}^{2 \beta \theta #1}}
\newcommand{\emb}[1]{\mathrm{e}^{-2 \beta \theta #1}}
\newcommand{\reff}[1]{(\ref{#1})}
\newcommand{\inv}[1]{\mathop{\frac{1}{ #1}}\nolimits}
\newcommand{\expp}[1]{\mathop {\mathrm{e}^{ #1}}}
\title[Total length of the genealogical tree for  quadratic CB]
 {Total length of the genealogical tree for  quadratic
 stationary  continuous-state  branching    processes}
\date{\today}
\author{Hongwei Bi}
\address{
Hongwei Bi,
School of Insurance and Economics, University of International Business
and Economics, Beijing 100029, China
}
\email{bihw2009@gmail.com}
\author{Jean-Fran\c cois Delmas}
\address{
Jean-Fran\c cois Delmas,
Université Paris-Est, CERMICS (ENPC), F-77455
Marne La Vallee, France.}
\email{delmas@cermics.enpc.fr}
\begin{document}
\subjclass[2010]{Primary: 60J80,  92D25; Secondary: 60G10, 60G55}

\keywords{branching process,
  population model, genealogical tree, lineage tree, time-reversal.}

\begin{abstract}
  We  prove  the   existence  of  the  total  length   process  for  the
  genealogical tree of   a population model with random  size given by a
  quadratic  stationary continuous-state  branching  processes. We  also
  give, for  the one-dimensional marginal, its Laplace transform as well
  as the fluctuation  of the corresponding  convergence.     This result
  is  to  be  compared  with   the  one  obtained  by  Pfaffelhuber  and
  Wakolbinger  for constant  size population  associated to  the Kingman
  coalescent. We  also give  a time reversal  property of the  number of
  ancestors process at all time, and give a description of the so-called
  lineage tree in this model.
\end{abstract}

\maketitle

\section{Introduction}

\subsection{The model}
Stochastic models for the evolution of a stationary population goes back
to the Wright-Fisher model, which  is for a finite fixed size population
in discrete generations.  Fleming-Viot  processes extend those models to
infinite size population  (with infinitesimal individuals) in continuous
time, see \textsc{Donnelly} and \textsc{Kurtz} \cite{dk:crfvmvd}. On the
other hand, the Galton-Watson process models the evolution of a discrete
random-size population  in discrete  generations based on  the branching
property:  descendants  of  two  individuals in  the  same  generation
behaves independently. Continuous  state branching (CB) processes extend
those   models   to  infinite   size   population  (with   infinitesimal
individuals) in continuous  time. The description of the  genealogy of a
CB  process  is done  using  historical  Dawson-Watanabe processes,  see
\textsc{Donnelly}  and \textsc{Kurtz}  \cite{dk:prmvpm}, or  Lévy trees,
see \textsc{Duquesne} and  \textsc{Le Gall} \cite{dlg:rtlpsbp}. In order
to  consider  Galton-Watson  processes  or CB  processes  in  stationary
regime,   one    has   to   condition   them    on   non-explosion   and
non-extinction. Then  one gets Galton-Watson process or  CB process with
an   immortal  individual,   see  \textsc{Delmas}   and  \textsc{Hénard}
\cite{dh:wdsds}  in  this   direction  for  non-homogeneous  models  and
references therein.  This  can also be seen as  Galton-Watson process or
CB process  with immigration if one removes the immortal  individual. We
shall consider one of the simplest model developed in \textsc{Chen} and
\textsc{Delmas}  \cite{cd:spsmrcatsbp} of  CB process  with  an immortal
individual  which  corresponds  to  a quadratic  sub-critical  branching
mechanism. The  results we present  concern neutral populations.

\subsection{The genealogy}
Describing  the genealogy  of a  large population  is a  key  issue in
population genetics.  A well-established model in this  direction is the
Kingman  coalescent  which  describes the genealogy  of  a  Fleming-Viot
process. Intuitively we  may think of the Kingman  coalescent at some
fixed time $s$ as a random tree with infinitely many leaves (corresponding
to the  individuals alive at time  $s$), when backwards in  time any two
lineages  coalesce   independently  at  rate   1.   See  \textsc{Pitman}
\cite{p:cmc}   and  \textsc{Sagitov}   \cite{s:gcamal}  for   a  general
description of  the coalescent processes.

The  study of  the evolution  in  $t$ of  the genealogical  tree of  the
population  at time  $t$,  or of  some  of its  functional has  recently
attracted  some interest  in mathematical  population genetics.  In this
direction  for the  quadratic  Fleming-Viot process  (associated to  the
Kingman  coalescent),  see  \textsc{Greven},  \textsc{Pfaffelhuber}  and
\textsc{Winter}  \cite{gpw:trdmpa}. The  functional of  the genealogical
tree of interest are:
\begin{itemize}
\item The time to the most recent common ancestor (TMRCA) at time $t$ is
  the distance between any leaf (which are all living individual at time
  $t$) of the genealogical  tree and its root. \textsc{Pfaffelhuber} and
  \textsc{Wakolbinger}  \cite{pw:pmrcaec} studied  the  evolving Kingman
  coalescent case and \textsc{Evans} and \textsc{Ralph} \cite{er:dtmrca}
  the large branching population case.

\item  The number of  mutations observed  in a  population in  a neutral
  model is distributed  as a Poisson random variable  with mean the rate
  of  mutation times  the total  tree  length of  the genealogical  tree
  (other  similar quantities  of interest  are the  number  of mutations
  which  appear only  once;  this  is distributed  as  a Poisson  random
  variable with mean the rate of  mutation times the total length of the
  external branches of the genealogical tree).  This motivated the study
  of the  rescaled total length  of coalescent trees which  converges in
  distribution to the Gumbel distribution  at a given fixed time for the
  Kingman  coalescent. (See  \textsc{Janson} and       \textsc{Kersting}
  \cite{jk:telkc}             for the external length asymptotics.)  The
   corresponding                             limiting process  has  been
  studied    in    \textsc{Pfaffelhuber},    \textsc{Wakolbinger}    and
  \textsc{Weisshaupt}  \cite{pww:tlec}      as well  as \textsc{Dahmer},
  \textsc{Knobloch}  and \textsc{Wakolbinger}  \cite{dkw:ktlphiqv} where
  it is proved the limiting process is not a semi-martingale.

Extension has been provided for other $\Lambda$-coalescents, see
\textsc{Kersting}, \textsc{Schweinsberg} and \textsc{Wakolbinger}
\cite{ksw:ebc} for Beta-coalescent and
\textsc{Schweinsberg} \cite{s:debsc} for the Bolthausen-Sznitman
coalescent.
\end{itemize}

Our main objective is to study the limit process of the renormalized
total length of the genealogical tree in  a population with random size
given by a quadratic stationary CB process.

\subsection{Main results}

We model  the random size  of the population  at time $t$ by  $Z_t$ with
$(Z_t, t\in \R)$  a stationary CB (or CB process with immigration) process with
sub-critical   quadratic   branching   mechanism.   This   model,    see
\cite{cd:spsmrcatsbp}  or   Section  \ref{sec:pop-mod}  for   a  precise
definition,  is characterized  by two  positive parameters  $\theta$ and
$\beta$, which describe the mean size of the population and a time scale:
\begin{itemize}
\item The  random size of the  population, $Z_t$, is  distributed as the
  sum  of  two  independent   exponential  random  variables  with  mean
  $1/(2\theta)$.
\item The TMRCA of the population living at time $t$, $A_t$, is
  distributed as the maximum of two  independent exponential
random variables  with mean $1/(2\beta\theta)$.
\end{itemize}
In particular, we have:
\[
\E[Z_t]=\inv{\theta} \quad\text{and}\quad
\E[A_t]=\frac{3}{4\beta} \E[Z_t].
\]

\begin{figure}[!ht]
\begin{center}
\includegraphics[width=9cm,height=6cm]{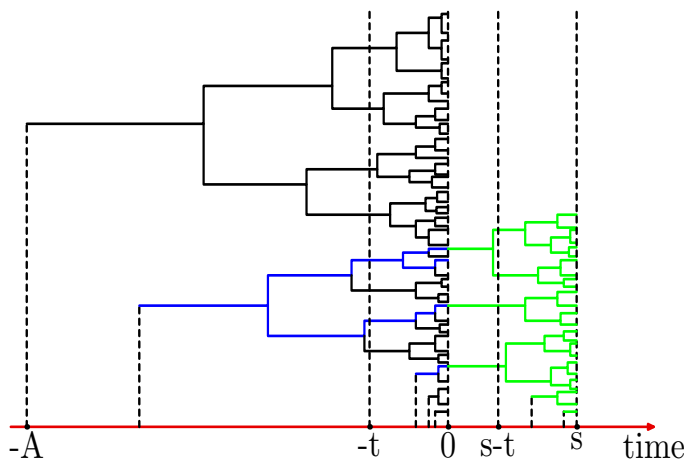}
\end{center}
\caption{We represent: in red the line of the immortal individual;
in blue and green the  genealogical tree of the population living at
time  $s$; in blue and black the genealogical tree of the population living at
time  $0$. At $t>0$ units of time before present, the number of
ancestors is $M_{-t}^0=8 $ for the population living at time 0 and
$M_{s-t}^s=4$ for the population living at time $s$. The TMRCA of the
population living at time 0 is $A$.}
\label{fig:gen}
\end{figure}

For $s<t$  let $M_s^t$  be the number of ancestors  at time
$s$  of the population living  at time $t$, the immortal  individual being excluded, see  \reff{def:mrt} for a precise definition. The following time
reversal  property for  the number  of ancestors  process $(M_{s}^{s+r},
s\in \R, r>0)$, see  Theorem \ref{theo:time-reversal}, is similar to the
time reversal property of the look-down process in the Kingman case, see
also Lemma  8 from \textsc{Aldous}  and \textsc{Popovic} \cite{ap:cbpmb}
in a  critical branching process setting  at a fixed time.  The proof of
the  next  Theorem  does  not  rely  on  discrete  approximation  as  in
\cite{ap:cbpmb}.

\begin{theo*}[Time reversal property]
The process  $(M_{s-r}^{s}, s\in \R, r>0)$ is distributed as
$(M_{s}^{s+r}, s\in \R, r>0)$.
\end{theo*}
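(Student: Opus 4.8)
The plan is to read off the statement from the explicit description of $(Z_t,t\in\R)$ and of its genealogy recalled in Section \ref{sec:pop-mod}. Two preliminary observations frame the problem. First, by stationarity of $(Z_t,t\in\R)$, for each \emph{fixed} $r>0$ the time shift $s\mapsto s+r$ sends $(M^{s}_{s-r},s\in\R)$ to $(M^{s+r}_{s},s\in\R)$; the two processes in the statement thus already have the same law for fixed $r$, and the whole content lies in the joint law across several values of $r$ (equivalently, across several ``windows'' $[s-r,s]$). Second, the genealogy attached to $(Z_t)$ is conveniently encoded by a stationary Poisson point measure $\sum_i\delta_{(\sigma_i,w_i)}$ of clans grafted on the immortal lineage, with intensity proportional to $d\sigma\otimes\cn(dw)$, where $\cn$ is the canonical measure of the quadratic branching mechanism; the $i$-th clan is founded at time $\sigma_i$, has duration $\zeta(w_i)$, and one identifies its ``level $a$'' with real time $\sigma_i+a$. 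Writing $n_w(a,b)$ for the number of ancestors at level $a$ of the level-$b$ cross-section of a clan $w$ (so $n_w(a,b)=0$ when $b\ge\zeta(w)$) and using that the immortal lineage is excluded from $M$, one has for $s<t$
\[
M^{t}_{s}=\sum_{i\,:\,\sigma_i\le s}n_{w_i}(s-\sigma_i,\,t-\sigma_i),
\]
so that the whole field $(M^{t}_{s})_{s<t}$, and in particular both processes $(M^{s}_{s-r})$ and $(M^{s+r}_{s})$, are deterministic functionals of $\sum_i\delta_{(\sigma_i,w_i)}$.

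From here I would prove equality of the finite-dimensional distributions. Fix finitely many windows $[s_k-r_k,s_k]$, $k=1,\dots,m$. Using the superposition and restriction properties of Poisson measures together with the branching property, the joint Laplace transform $\E\big[\exp(-\sum_k\lambda_kM^{s_k}_{s_k-r_k})\big]$ can be expressed through the law of $(Z_t)$ evaluated at the endpoints of the windows, with coefficients built by iterated composition of the function $u_r$ solving $\partial_ru_r=-\psi(u_r)$ (nested windows being precisely what makes the compositions occur, exactly as in the computation of a one-dimensional marginal). Running the same computation for the windows $[s_k,s_k+r_k]$ gives an analogous expression, and the required equality reduces to two ingredients: (i) the process $(Z_t,t\in\R)$ is reversible, being a stationary one-dimensional diffusion, so that $\E[\exp(-\sum_jc_jZ_{t_j})]=\E[\exp(-\sum_jc_jZ_{-t_j})]$ for all choices of $t_j$ and $c_j\ge0$; and (ii) an algebraic symmetry of the coefficient functionals under the map $t\mapsto-t$ (with the windows correspondingly relabelled), which follows from the semigroup identity $u_{r+r'}=u_r\circ u_{r'}$ and the explicit form of $u_r(\infty)$. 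Once the finite-dimensional laws coincide, equality of the two processes follows by a monotone-class argument.

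The main obstacle I expect is ingredient (ii): one must check that after the time reversal the ``number of ancestors at an earlier level'' is again read off as a number of ancestors (not as a number of descendants), with consistent bookkeeping of the founding times $\sigma_i$ and, crucially, of the excluded immortal lineage — the oldest ancestral lineage of the population at time $s$, which in the backward picture is the one absorbed into the immortal lineage, must correspond under the reversal to the lineage that leaves the immortal lineage last in the forward picture. An equivalent and perhaps more transparent route is to exhibit directly a measure-preserving transformation of $\sum_i\delta_{(\sigma_i,w_i)}$ intertwining the two functionals; this hinges on a self-duality of the clan genealogy specific to the quadratic mechanism, which is the one genuinely non-formal point. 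The remaining arguments — the Poissonian bookkeeping, the branching-property computation, and the passage from finite-dimensional laws to the process level — are routine.
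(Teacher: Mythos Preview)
Your proposal correctly locates the difficulty --- what you call ingredient (ii), the ``self-duality'' of the clan genealogy --- but stops short of establishing it, and this is precisely where all the content lies. Moreover, the reduction you propose in ingredient (i) does not go through as stated: expressing the joint Laplace transform of $(M^{s_k}_{s_k-r_k})_k$ as $\E[\exp(-\sum_j c_j Z_{t_j})]$ and then invoking reversibility of the diffusion $Z$ fails once the windows overlap nontrivially. For example, $M_0^r$ and $Z_s$ (with $0<r<s$) are \emph{not} conditionally independent given $Z_0$, because a subtree reaching level $r$ also contributes to $Z_s$; hence the joint Laplace transform of $(M_0^r,M_s^{2s})$ is not an exponential functional of $(Z_0,Z_s)$ alone. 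So the reduction to ``reversibility of $Z$ plus an algebraic identity'' is too optimistic --- the genealogical correlations between different windows are not captured by finitely many values of $Z$. Your alternative route via a measure-preserving transformation of the Poisson point measure is likewise only named, not constructed.

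The paper organizes the computation differently, and this organizing device is the main idea you are missing. Rather than matching arbitrary finite-dimensional marginals, it observes that the \emph{path-valued} processes $\cm^{(\text{b})}=(\cm^{(\text{b})}_s)_{s\in\R}$ and $\cm^{(\text{f})}=(\cm^{(\text{f})}_s)_{s\in\R}$, with $\cm^{(\text{b})}_s=(M^s_{s-r})_{r>0}$ and $\cm^{(\text{f})}_s=(M_s^{s+r})_{r>0}$, are both Markov in $s$. The theorem then reduces to two finite checks: (a) the one-time marginals agree, $\cm^{(\text{b})}_0\stackrel{d}{=}\cm^{(\text{f})}_0$ (Lemma~\ref{lem:MZ}), which --- since each of $(M_{-r})_{r>0}$ and $(M_0^r)_{r>0}$ is itself Markov in $r$ --- comes down to matching the explicit two-dimensional Laplace transforms of $(M_{-t},M_{-r})$ and $(M_0^t,M_0^r)$; and (b) the transition kernels agree (Lemma~\ref{lem:mf=mb}), shown by computing the conditional two-dimensional Laplace transforms $\E[e^{-\lambda M^{-s}_{-s-r}-\mu M^{-s}_{-s-t}}\mid\cm^{(\text{b})}_0]$ and $\E[e^{-\lambda M_s^{s+r}-\mu M_s^{s+t}}\mid\cm^{(\text{f})}_0]$ and verifying they are the \emph{same} explicit functional of the conditioning path. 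Step (b) is where the real work sits: the backward side uses the reconstruction of $Z$ from $M$ (Proposition~\ref{prop:ZfromM}), the forward side uses the Williams decomposition, and one then matches the resulting exponential integrals term by term. This Markov reduction is exactly what keeps the computation two-dimensional; your direct approach to general $m$-point Laplace transforms would face uncontrolled combinatorics without it.
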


We define for $r>0$ the ``probability'' of
an  infinitesimal  individual  to  have  descendants $r$  unit  of  time
forward, see  definition \reff{eq:defc}, as:
\[
c(r)=\frac{2\theta}{\expp{2\beta\theta r}  -1}
\cdot
\]
The lineage tree $\ca_s$ of the population at time $s$ is defined
by \textsc{Popovic} \cite{p:agcbp} (see also \cite{ap:cbpmb})  in a critical
branching setting (see also the references in Remark \ref{rem:AldPop}),
and it corresponds in our setting  to the jumping times of the process $(M_{s-r}^s, r>0)$:
\[
\ca_s=\{r>0;  M_{s-r}^s
-M_{(s-r)_-}^s =1\}.
\]
The lineage tree of  $Z_s$  at  some   current time $s$ is depicted
in Figure \ref{fig:gen}.
Using the time reversal property, we
deduce in Remark \ref{rem:AldPop} the following Corollary.

\begin{cor*}
  The  lineage  tree  $\ca_s$  has  the same  distribution  as  the  set
  $\{\zeta_j; x_j <Z_s\}$  where $\sum_{j\in J} \delta_{x_j,\zeta_j}(dx,
  dz)$ is a Poisson point measure on $(0,+\infty )^2$ with intensity $dx
  |c'(z)|dz$ and independent of $Z_s$.
\end{cor*}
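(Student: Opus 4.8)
The plan is to use the time reversal property to pass from the backward number-of-ancestors process, whose jump set is $\ca_s$, to the forward one, and then to read off the Poisson structure from the canonical (L\'evy forest) decomposition of the quadratic CB process. First observe that $\ca_s$ is a deterministic measurable functional of the path $r\mapsto M_{s-r}^s$: it is the set of $r>0$ at which this non-increasing integer-valued path jumps down by one. Hence, applying Theorem \ref{theo:time-reversal} with $s$ fixed, $\ca_s$ has the same distribution as the same functional of $r\mapsto M_s^{s+r}$, namely the set of $r>0$ at which $r\mapsto M_s^{s+r}$ has a downward jump of size one.

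I would then condition on $Z_s=z$ and describe this forward process. By the branching Markov property of the stationary CB process, the sub-population made of the descendants of the individuals alive at time $s$, seen at time $s+r$, is a sub-critical quadratic CB process without immigration issued from mass $z$; the mass immigrated by the immortal individual after time $s$ is not ancestral to the time-$s$ population and so does not enter it, which is precisely why removing the immortal individual causes no difficulty here. By the canonical decomposition of this CB process, the time-$s$ population of mass $z$ is a Poisson cloud of independent sub-families, each issued from a single (infinitesimal) time-$s$ individual, and such an individual is an ancestor of the population at time $s+r$ exactly when the height of its sub-family exceeds $r$. Thus $M_s^{s+r}$ is the number of sub-families of height $>r$; since these heights are a.s.\ distinct, $r\mapsto M_s^{s+r}$ has only unit down-jumps, located exactly at the heights of the sub-families. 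Finally, $c(r)$ from \reff{eq:defc} is the intensity, under the canonical measure, of the set of sub-families of height $>r$: it is the maximal solution of $c'=-\psi(c)$ on $(0,\infty)$ with $c(0^+)=+\infty$ for the branching mechanism $\psi$ of the model, equivalently $\mathrm{e}^{-z\,c(r)}$ is the probability that a quadratic CB process issued from $z$ is extinct at time $r$; this is already contained in the computation of the law of $M_s^t$.

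By the mapping theorem for Poisson random measures it then follows that, conditionally on $Z_s=z$, the heights of the sub-families form a Poisson point measure on $(0,+\infty)$ with intensity $z\,|c'(r)|\,dr$; in particular its atoms are a.s.\ pairwise distinct, as used above. Integrating over the law of $Z_s$, the set of down-jumps of $(M_s^{s+r},\,r>0)$ has the law of $\{\zeta_j;\ x_j<Z_s\}$, where $\sum_{j\in J}\delta_{x_j,\zeta_j}$ is a Poisson point measure on $(0,+\infty)^2$ with intensity $dx\,|c'(z)|\,dz$ independent of $Z_s$: indeed, given $Z_s=z$, the restriction of this measure to $\{x<z\}$ has intensity $\ind_{\{x<z\}}\,dx\,|c'(\zeta)|\,d\zeta$, whose projection on the second coordinate is a Poisson point measure on $(0,+\infty)$ with intensity $z\,|c'(\zeta)|\,d\zeta$. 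Combined with the reduction of the first paragraph, this is the claim.

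The one non-routine point is the description, in the second paragraph, of the conditional law given $Z_s$ of the forward ancestor process $(M_s^{s+r},\,r>0)$ — that it is the pure-death process counting the sub-families of a Poisson cloud whose heights have tail $c(r)$ under the canonical measure. This relies on the L\'evy forest / canonical decomposition of the quadratic CB process and is essentially already isolated in the computation of the law of $M_s^t$; granting it, the rest is bookkeeping on Poisson point measures.
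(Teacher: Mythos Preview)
Your argument is correct and follows essentially the same route as the paper's own derivation in Remark~\ref{rem:AldPop}: identify $\ca_s$ with the jump set of the backward ancestor process, transfer to the forward process $(M_s^{s+r},\,r>0)$ by time reversal, and then read off the Poisson structure of its jump set from the branching/canonical decomposition, which is exactly the content of \reff{eq:M=zeta}. The only cosmetic difference is that you invoke the full Theorem~\ref{theo:time-reversal}, whereas the paper uses the weaker Lemma~\ref{lem:MZ} (equality in law of $(Z_0,(M_{-t},t>0))$ and $(Z_0,(M_0^t,t>0))$), which already suffices for a fixed $s$; both yield the same marginal identity you need.
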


The  process $(M_{s-r}^s,  r>0)$  is  a (forward)  death  process and  a
(backward)  birth  process whose  intensity  are  given in  Propositions
\ref{prop:M-mart}  and \ref{prop:M-back}.  We  also give  in Proposition
\ref{prop:ZfromM}  a  reconstruction result  of  the process  $(Z_{s-t},
t>0)$ from the process $(M_{s-r}^s,  r>0)$ by grafting CB processes, and
we  then deduce  a  formula on  the  weighted integral  of the  ancestor
process,  see  Corollary  \ref{cor:exp-M}.   For reconstructions  of  CB
processes  from   backbones  instead  of  genealogical   tree  see  also
\textsc{Duquesne} and \textsc{Winkel} \cite{dw:glt}.

The total length of the genealogical tree for the population
living at time $s$, up to time $s-\varepsilon$ (with $\varepsilon>0$) is
given by:
\[
L_\varepsilon^s= \int_\varepsilon^\infty M_{s-r}^s\, dr,
\]
and we consider the normalized  total length  up to time $s-\varepsilon$
defined by:
\[
\cl_\varepsilon^s=L_\varepsilon^s-Z_s\int_\varepsilon^\infty c(r)\, dr.
\]

We have the following result, see Theorems \ref{cl2conver} and
\ref{theo:cadlag} as well as Lemma \ref{laplace-W}.

\begin{theo*}
There exists a càdlàg stationary process $(W_s, s\in \R)$  such that for all $s\in
\R$ the compensated tree length  $(\cl_\varepsilon^s,
\varepsilon>0)$ converges a.s. and in $L^2$ to $W_s$.

Furthermore we have for $\lambda>0$:
\[
\rE\left[\expp{-2 \beta\theta\lambda W_0}\,|\,Z_0 =\frac{z}{2\theta}\right]
=\expp{-z\varphi(\lambda)}
\quad\text{and}\quad
\rE\left[\expp{-2\lambda\beta \theta  W_0}\right]
=\left(1+\varphi(\lambda)\right)^{-2}
\]
with
\[
\varphi(\lambda)=  -\lambda \int_0^1 dv\, \frac{1-v^{\lambda}}{1-v}\cdot
\]
\end{theo*}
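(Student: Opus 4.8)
The plan is to establish the three assertions in turn: (i) almost sure and $L^2$ convergence of $\cl_\varepsilon^s$ as $\varepsilon \downarrow 0$; (ii) existence of a c\`adl\`ag stationary modification $(W_s, s\in\R)$ of the limit; (iii) computation of the (conditional) Laplace transform. For (i), fix $s$ and write $\cl_\varepsilon^s = \int_\varepsilon^\infty (M_{s-r}^s - Z_s\, c(r))\, dr$. The natural strategy is to view $\varepsilon \mapsto \cl_\varepsilon^s$ as a backward martingale (or to exhibit an explicit martingale closing it). Using the birth/death structure of $(M_{s-r}^s, r>0)$ from Propositions \ref{prop:M-mart} and \ref{prop:M-back}, together with Corollary \ref{cor:exp-M} giving $\rE[M_{s-r}^s \mid Z_s] = Z_s\, c(r)$ (which is exactly what makes $c$ the right compensator), one computes $\rE[(\cl_\varepsilon^s - \cl_{\varepsilon'}^s)^2]$ for $\varepsilon < \varepsilon'$ via a second-moment/covariance formula for the ancestor process. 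The key point is that $\int_0^1 r\, |c'(r)|\, dr < \infty$ and $\int_0^1 c(r)^2 r^2\, dr$-type quantities are finite because $c(r) \sim 1/(\beta r)$ near $0$, so the variance increments vanish as $\varepsilon,\varepsilon' \to 0$; this gives an $L^2$-Cauchy sequence, hence an $L^2$-limit $W_s$, and the backward-martingale property upgrades this to almost sure convergence.

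For (ii), stationarity of $(W_s, s\in\R)$ follows from the stationarity of $(Z_t, t\in\R)$ and the time-homogeneity built into the definition of $M_{s-r}^s$ (formally, from the time reversal property, Theorem \ref{theo:time-reversal}, which identifies the law of $r \mapsto M_{s-r}^s$ uniformly in $s$). To get a c\`adl\`ag modification one studies $s \mapsto W_s$: since $W_s = \lim_\varepsilon \cl_\varepsilon^s$ and each $\cl_\varepsilon^s = \int_\varepsilon^\infty M_{s-r}^s dr - Z_s \int_\varepsilon^\infty c(r)dr$ has explicit path regularity in $s$ (the ancestor process evolves by finitely many jumps on compact $r$-intervals, and $Z$ is c\`adl\`ag), one shows the convergence is locally uniform in $s$ in a suitable sense, or one applies a Kolmogorov-type continuity/c\`adl\`ag criterion to $\rE[(W_t - W_s)^2]$ or $\rE[(W_t-W_s)^2(W_s - W_u)^2]$. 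I expect this is where one invokes Theorem \ref{theo:cadlag} as the substantive step; it should follow the same moment estimates as (i), now tracking the $s$-dependence through the jump times of $M$ and the jumps of $Z$.

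For (iii), the Laplace transform is the computational heart. Conditionally on $Z_s = z/(2\theta)$, the reconstruction/lineage-tree description (the Corollary on $\ca_s$, i.e.\ a Poisson point measure with intensity $dx\,|c'(z)|dz$ on $(0,\infty)^2$ restricted to $x < Z_s$) together with the grafting picture of Proposition \ref{prop:ZfromM} should express $W_0$ as a functional of this Poisson measure plus the grafted CB excursions. One then applies the exponential formula for Poisson point measures: $\rE[\exp(-2\beta\theta\lambda W_0)\mid Z_0 = z/(2\theta)]$ becomes $\exp(-z \int_0^\infty (1 - g(r))\,|c'(r)|\,dr)$ for an appropriate $g$ coming from the Laplace functional of the CB pieces grafted along a branch of length $r$; the change of variables $v = c(r)/(2\theta) = 1/(e^{2\beta\theta r}-1)$, so that $r \in (0,\infty)$ corresponds to $v \in (0,1)$ and $|c'(r)|dr$ transforms cleanly, should collapse the integral to $\varphi(\lambda) = -\lambda\int_0^1 \frac{1-v^\lambda}{1-v}\,dv$. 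The unconditional transform then follows by integrating against the law of $Z_0$: since $2\theta Z_0$ is the sum of two independent standard exponentials (a $\Gamma(2,1)$ variable, density $z e^{-z}$), $\rE[\exp(-z\varphi(\lambda))]$ over that density gives $(1+\varphi(\lambda))^{-2}$.

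The main obstacle I anticipate is step (iii): correctly identifying the functional $g$, i.e.\ verifying that the contribution of an individual in the lineage tree born at backward-time $r$ — namely the length it and its grafted subtree contribute to $W_0$ after compensation — has the precise exponential moment whose logarithm, after the substitution $v = 1/(e^{2\beta\theta r}-1)$, produces $\log(1/v)$ against the weight $\lambda v^{\lambda-1}$ needed to yield $-\lambda\int_0^1 (1-v^\lambda)/(1-v)\,dv$. This requires carefully matching the CB grafting Laplace exponents with the $c$-compensation and checking that the (infinite) intensity $|c'(r)|dr$ near $r=0$ is tamed exactly by the $1-g(r) = O(r)$ behaviour, mirroring the $L^2$ bound from step (i).
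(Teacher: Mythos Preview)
Your outline has two genuine gaps.

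\textbf{Step (i): the backward-martingale claim fails.} You propose to obtain a.s.\ convergence from a backward-martingale structure, but $(\cl_\varepsilon)_{\varepsilon>0}$ is \emph{not} a backward (or forward) martingale in any of the natural filtrations. Using the time-reversed lifetime representation (Lemma~\ref{lem:MZ} or \reff{eq:M=zeta}), conditionally on $Z_0$ the process $(M_{-r},r>0)$ has the law of $r\mapsto \#\{j:\zeta_j\ge r,\ x_j<Z_0\}$ for a Poisson point measure of intensity $Z_0\,|c'(t)|\,dt$. Hence for $r<\varepsilon$,
\[
\rE\big[M_{-r}-c(r)Z_0\ \big|\ Z_0,\ (M_{-u})_{u\ge \varepsilon}\big]
= M_{-\varepsilon}-c(\varepsilon)Z_0,
\]
which is nonzero, so $\rE[\cl_{\varepsilon'}-\cl_\varepsilon\mid\cdot]=(\varepsilon-\varepsilon')(M_{-\varepsilon}-c(\varepsilon)Z_0)\neq 0$. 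The paper instead proves $L^2$ convergence by an explicit second-moment computation (Lemma~\ref{lem:ntlp2}), using the identity $\rE[(M_{-r}-c(r)Z_0)(M_{-t}-c(t)Z_0)]=c(r)/\theta$ for $r>t$, and then obtains a.s.\ convergence by a subsequence argument along $a_n=1/n^2$ together with the sandwich bound $\cl_{a_n}-Z_0\int_{a_{n+1}}^{a_n}c\le \cl_\varepsilon\le \cl_{a_{n+1}}+Z_0\int_{a_{n+1}}^{a_n}c$. Your $L^2$ sketch is fine, but you need this (or another) replacement for the a.s.\ part.

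\textbf{Step (iii): you are overcomplicating the Laplace computation.} You plan to express $W_0$ via the lineage tree \emph{plus} grafted CB excursions from Proposition~\ref{prop:ZfromM}, and anticipate difficulty identifying the Laplace functional $g$ of those grafted pieces. In fact no grafting is needed. The time-reversal (Remark~\ref{rem:tr-totl}, equation~\reff{eq:=timer}) gives directly
\[
(L_\varepsilon,\varepsilon>0)\ \stackrel{\text{(d)}}{=}\ \Big(\sum_{x_j<Z_0}(\zeta_j-\varepsilon)_+,\ \varepsilon>0\Big),
\]
so that $\cl_\varepsilon \stackrel{\text{(d)}}{=}\sum_{x_j<Z_0}(\zeta_j-\varepsilon)_+ - Z_0\,\N[(\zeta-\varepsilon)_+]$, a compensated Poisson sum of \emph{lifetimes only}. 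The exponential formula then gives, with $\N[d\zeta]=|c'(t)|\,dt$,
\[
\rE\!\left[\expp{-2\beta\theta\lambda W_0}\ \Big|\ Z_0=\tfrac{z}{2\theta}\right]
=\exp\!\Big(-z\int_0^\infty \tfrac{\expp{t}}{(\expp{t}-1)^2}\big(1-\expp{-\lambda t}-\lambda t\big)\,dt\Big),
\]
and an integration by parts followed by the substitution $v=\expp{-t}$ (not $v=1/(\expp{2\beta\theta r}-1)$ as you suggest) yields $\varphi(\lambda)=-\lambda\int_0^1\frac{1-v^\lambda}{1-v}\,dv$. The unconditional formula then follows from \reff{eq:exp-Z}. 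Your anticipated ``main obstacle'' disappears once you use the time-reversed representation.

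Your step~(ii) is essentially right: the paper proves the c\`adl\`ag property via a Billingsley-type criterion controlling $\rP(|W_{-t}-W_0|\wedge|W_s-W_0|\ge \lambda)$, close to what you guessed.
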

Proposition \ref{prop:fluctW} gives the fluctuations:
$\sqrt{\beta } (\cl_\varepsilon-W_0)/\sqrt{\varepsilon}$ converges in
distribution as $\varepsilon$ goes down to 0 towards
$\sqrt{2Z_0}\,  G$,
with $G\sim \cn(0,1)$ a standard Gaussian random variable independent of
$Z_0$.

Notice the process $(\cl_\varepsilon^s, s\in \R)$ is not continuous, and
this implies that $W$ is not continuous.
We also provide the covariance of $W$, see Proposition \ref{prop:ewswo},  and get, see Remark
\ref{rem:cov}, that there exists some finite positive constant $C$
such that:
\[
\rE[(W_s-W_0)^2]
\sim_{0+} Cs \, \log(s)^2.
\]

\section{Population model}\label{sec2:pre}
Let $\beta>0$ and $\theta>0$ be fixed scale parameters.

\subsection{Sub-critical quadratic CB process}
Consider  a  sub-critical  branching  mechanism  $\psi(\lambda)  =  \beta
\lambda^2 + 2 \beta \theta \lambda $, let $\P_x$
be the  law of  a CB process $Y =  (Y_t, t \geq  0)$ started  at mass  $x$ with
branching mechanism $\psi$. We extend $Y$ on $\R$ by setting $Y_t=0$ for
$t<0.$ Let $\E_x$ and $\N$ be respectively the corresponding expectation
and the  canonical measure (excursion  measure) associated to  $Y$.
Recall that $Y$ is Markovian under $\P_x$ and $\N$. We have for
every $t > 0$:
\[
\E_x\left[\expp{-\lam Y_t}\right] = \expp{-x u(\lam, t)} \quad
\text{for}\quad
\lambda > - \frac{2\theta}{1-\embt}
\]
with
\begin{equation} \label{eq:defu}
u(\lam, t)  =  \N [ 1 - \expp {- \lam Y_t} ]
= \frac{ 2 \theta \lam}{( 2 \theta + \lam) \ebt - \lam}
\end{equation}
satisfying  the  backward  and    forward    equations:
\begin{equation}\label{eq:backe}
\partial_t u(\lam, t)=-\psi(u(\lam, t)) \quad \text{and}\quad
\partial_t u(\lam, t)= -\psi(\lam)\,
\partial_\lam u(\lam, t).
\end{equation}
Then it is easy to  derive that for $t>0$:
\begin{equation}
\label{eq:int-u}
\beta \int_0^t \!\!u(\lambda, r)\; dr= \log\left(1+ \lambda
  \frac{1-\expp{-2\beta\theta t }}{2\theta}\right)
\quad\text{and}\quad
\beta\int_0^\infty \!\!u(\lam, t)\,dt=\log\left(1+
\frac{\lam}{2\theta}\right).
\end{equation}

Let      $ c(t)= \lim_{\lam \to \infty} u(\lam, t)$ and denote by
$\zeta=\inf\{ t>0;  Y_t=0\}$ the lifetime of $Y$ under $\N$. Then we
have for $t>0$:
 \begin{equation}\label{eq:defc}
 c(t) =\N[\zeta > t] = \frac{2 \theta}{\ebt - 1}\cdot
 \end{equation}
From the Markov property of $Y$, we deduce that for $s>0$ and $t,\lambda\geq 0$:
\begin{equation}
   \label{eq:uc=c}
u(u(\lambda,s),t)=u(\lambda, t+s) \quad \text{and}\quad u(c(s), t)=c(t+s).
  \end{equation}
 We deduce from \reff{eq:int-u} that for $s>t>0$:
\begin{equation}
   \label{eq:int-c}
\beta\int_s^{+\infty} c(r)\, dr=\beta \int_0^{+\infty } u(c(s), r) \, dr= \log
\left(1+ \frac{ c(s)}{2\theta} \right)= -\log\left(1-\expp{-2\beta
    \theta s} \right)
\end{equation}
as well as
\begin{equation}
   \label{eq:intc}
\beta\int_t^s c(r)\, dr=  \log
\left(\frac{1-\expp{-2\beta\theta s}}{1- \expp{-2\beta\theta t}}\right).
\end{equation}
We easily get the following results for $t>0$:
\begin{equation}\label{eq:nyt}
\N [Y_t] = \embt
\end{equation}
as well as
\begin{equation}
\label{eq:NeYzeta}
\N[\expp{-\lambda Y_t} \ind_{\{\zeta>t\}}]
= c(t) -u(\lambda,t),
\end{equation}
 and, thanks to the Markov property of $Y$ and \eqref{eq:backe} for
 $s>0$, $t>0$:
\begin{equation}\label{eq:yrys0}
\N[Y_{s}\ind_{\{Y_{s+t}=0\}}]
=\N\left[Y_{s}\expp{-c(t)Y_{s}}\right]
=\frac{\psi(c(s+t))}{\psi(c(t))}
=\eb{s} \left(\frac{c(s+t)}{c(t)}\right)^2\cdot
\end{equation}

\subsection{Genealogy of the CB process $Y$}
\label{sec:genY}
We will recall the genealogical tree for the CB process which is studied
in \textsc{Le  Gall} \cite{lg:iert} or  \textsc{Duquesne} and \textsc{Le
  Gall} \cite{dlg:rtlpsbp}. Since  the branching mechanism is quadratic,
the corresponding L\'evy process is just the Brownian motion with drift.
Let  $ W  = (W_t,  t \in  \R_+)  $ be  a standard  Brownian motion.   We
consider  the Brownian  motion $W^\theta=(W_t^\theta,  t\in  \R_+)$ with
negative drift and the corresponding reflected process above its minimum
$H=(H(t), t\in \R_+)$:
$$
W_t^\theta=\sqrt{\frac{2}{\beta}} W_t - 2\theta t
\quad\text{and}\quad
H(t)= W_t^\theta -\inf_{s\in [0,t]} W_s^\theta.
$$
We deduce from equation $(1.7)$ in \cite{dlg:rtlpsbp} that $H$ is the
height process associated to the branching  mechanism $\psi$. For a
function $h$ defined on $\R_+$, we set:
$$
\max(h)=\max(h(t), t\in \R_+).
$$
Let $\rN[dH]$ be the excursion measure of $H$ above $0$ normalized  such that $\rN[\max(H)\geq r]= c(r)$. Let  $(\ell_t^x(H), t\in \R_+, x\in \R_+)$ be the local time of    $H$ at time $t$ and level $x$. Let $\zeta=\inf\{t>0; H(t)=0\}$   be the duration of the excursion $H$ under $\rN[dH]$.  We recall that $(\ell_{\zeta}^r (H), r\in\R_+)$ under $\rN$ is distributed as $Y$ under $\N$.       From now on we shall identify $Y$ with    $(\ell_{\zeta}^r(H), r\in \R_+)$ and write $\N$ for $\rN$. We now recall the construction of the genealogical tree of the CB process  $Y$ from $H$.

Let      $f$ be a  continuous  non-negative function defined  on $[0,+\infty)$, such that $f(0)=0$,  with compact support. We set $\zeta^f = \sup\{t; f(t) > 0\}$,        with the convention that $\sup\emptyset=0$.        Let $d^f$ be the non-negative function
defined by:
$$
d^f(s,t)=f(s)+f(t)- 2 \inf_{u\in [s\wedge t, s\vee t]} f(u).
$$
It can be easily checked that    $d^f$ is a semi-metric on   $[0, \zeta_f]$. One can define the  equivalence relation associated  to $d^f$ by $s\sim t$ if and only if  $d^f(s,t)=0$. Moreover, when we consider the quotient space $T^f=[0,\zeta_f]/_{\sim}$ and,  noting again  $d^f$ the induced metric on  $T^f$ and rooting     $T^f$ at $\emptyset^f$,     the equivalence class of $0$, it can be checked that the space    $(T^f,d^f,\emptyset^f)$ is a compact rooted real tree.

The so-called  genealogical tree of the CB process $Y$ is the real tree
$\Tau=(T^H, d^H, \emptyset^H)$. In what follows,     we shall mainly
present the result using the height process  $H$ instead of the
genealogical tree $\Tau$, and say that $H$ codes for the genealogy of
$Y$.

Let $a>0$ and  $(H_k, k\in \ck_a)$ be the excursions  of $H$ above level
$a$.  It  is well  known that $\sum_{k  \in \ck_a}  \delta_{H_k}(dH)$ is
under $\N$  and conditionally on $(Y_r,  r \in [0,a])$,  a Poisson point
measure with intensity  $Y_a \N[dH]$. We define the  number of ancestors
at  time $a$  of the  population living  at time  $b$ as  the  number of
excursions above level $a$ which reach level $b > a$ by:
$$
R_a^b(H) = \sum_{k \in \ck_a} \ind_{\{\max(H_k) \geq b - a\}}.
$$
When there is no confusion, we shall write $R_a^b$ for $R_a^b(H)$.
Notice that $R_a^b$ is conditionally on $Y_a$ a Poisson random variable with mean
$c(b-a) Y_a$.

We compute functionals of $R$ in Section \ref{sec:app-R}.

\subsection{The population model}
\label{sec:pop-mod}
We model the population using a stationary CB process.
Let $\D$ be the space of c\`adl\`ag paths having $0$  as a
trap. Consider under $\rP$ a Poisson point measure
\begin{equation}
   \label{eq:defcn}
\cn(dt, dY)=\sum_{i \in I} \delta_{(t_i, Y^i)}(dt, dY)
\end{equation}
     on
$\R \times \D$  with intensity $2 \beta dt \N[dY]$.   We shall consider the process $Z=(Z_t, t\in \R)$ defined by
\begin{equation}\label{defz}
Z_t= \sum_{t_i \leq t} Y_{t-t_i}^i.
\end{equation}
Let  $\rE$ be  the  expectation  with respect  to  $\rP$.  According  to
\cite{cd:spsmrcatsbp},  $Z$ is  a  CB process with  branching mechanism  $\psi$,
conditionally on non-extinction.  Notice the process $Z$ is a.s. finite,
a.s.  positive and  stationary. We shall model a  population with random
size  by the  process $Z$.  The process  $Z$ can  be seen  as a  CB process with
immigration  or  a population  with  an  infinite  lineage (or  immortal
individual).

Using  the property  of  the Poisson  point
measure, we have:
\begin{equation}
   \label{eq:exp-Z}
\rE[\expp{-\lam Z_t}]=\left(1+\frac{\lam}{2 \theta}\right)^{-2},
\end{equation}
which also gives:
\begin{equation}\label{eq:momentz}
\rE[Z_t]=\frac{1}{\theta} \quad \text{and}\quad \rE[Z_t^2]=\frac{3}{2\theta^2}\cdot
\end{equation}
Using the branching property of $Y$, it is easy to get for $s\geq 0$:
\begin{equation}
   \label{eq:EZ0Zs}
\E[Z_0 Z_s ]=\frac{2+\embs}{2\theta^2}\cdot
\end{equation}

\section{The number of ancestors process}\label{sec:ancespro}
\subsection{Definition}\label{sec:AP-def}

We describe the genealogy of $Z$ using the framework developed in
Section \ref{sec:genY}. Let
\[
 \cn'(dt, dH)=\sum_{i \in I}\delta_{(t_i,  H^i)}(dt, dH)
\]
be a  Poisson point  measure with intensity  $2 \beta\, dt  \N(dH)$.  We
will write $Y^i_a$ for $\ell^a(H^i)$  for $i \in I$ and use \eqref{defz}
for  the definition  of  $Z$.   Let $\Tau^i$  be  the genealogical  tree
associated to  $H^i$. Consider the real  line as an  infinite spine, and
for  all  $i\in I$,  graft  the  tree $\Tau^i$  at  level  $t_i$ on  the
infinite spine.   This defines  a tree which  we call  the genealogical
tree of  the process  $Z$.  Thus $\sum_{i  \in I} \delta_{(t_i,  H^i )}$
allows to  code (on an enlarged  space) the genealogy of  $Z$ defined by
\eqref{defz}.

Let $r < t$. We define the number of ancestors, excluding the immortal
individual, at time  $r$ of the  population living at time $t$, $M_r^t$, by:
\begin{equation}\label{def:mrt}
M_r^t =\sum_{i\in I}\ind_{\{t_i < r\}} R_{r - t_i}^{t-t_i}(H^{i}).
\end{equation}
We  shall identify $M_{-r}$  with $M_{-r}^0$ for $r>0$,  when there  is no  risk of
confusion.  The time  to the most recent common  ancestor (TMRCA) of the
population living at time $0$ is defined as $\inf\{r>0; M_{-r}=0\}$.  We
shall  call $(M_r^t,  -\infty <r<t<+\infty  )$ the  number  of ancestors
process.

\begin{rem}
   \label{rem:planar}
   Notice the time order on $H^i$ allows to define an order structure on
   $\Tau^i$,  which could  then be  described  as a  planar tree.   Then
   grafting $\Tau^i$ at  level $t_i$ either on the left  or on the right
   of the infinite spine would  define a planar genealogical tree of the
   process $Z$.  Since this order structure is of no use to the  study of the
   length of the genealogical tree, we decide to
   omit it and concentrate on the number of ancestors process instead.
\end{rem}

Recall  from  \cite[Section 6]{cd:spsmrcatsbp},  that  conditionally  on
$(Z_{-u}, u\geq r) $, the random variable $ M_{-r} $ is a Poisson random
variable    with   intensity    $c(r)Z_{-r}$.   This    implies,   using
\reff{eq:momentz} and \reff{eq:exp-Z} that for $t>0$:
\begin{equation}
   \label{eq:EM}
   \rE[M_{-t}]=\frac{c(t)}{\theta},\quad
   \rE\left[M_{-t}^2 \right]=\frac{c(t)}{\theta}\left(1+
     \frac{3}{2}\frac{c(t)}{\theta}\right)= 2\frac{\expp{2\beta\theta
       t}+2}{(\expp{2\beta\theta t} -1)^2},
\end{equation}
\begin{equation}
\label{eq:lapm}
\rE[\expp{-\lambda M_{-r}}]
=\rE[\expp{-(1-\expp{-\lambda})c(r) Z_{-r}}]
= \left(1+\frac{c(r)}{2\theta} (1- \expp{-\lambda})\right)^{-2},
\end{equation}
and moreover a.s.:
\begin{equation}
   \label{eq:limM/c}
\lim_{r\to 0+}\frac{M_{-r}}{c(r)} =Z_0.
\end{equation}

\subsection{Associated birth and death process}
\label{sec:AP-bd}

Thanks to the  branching property, we get that  the process $(M_t, t<0)$
is a birth process starting from 0  at $-\infty $. The birth rate is the
sum of  two terms:  the first  one is the  contribution of  the immortal
individual and it is  equal to $2 \beta c(-t) \, dt$;  the second one is
the contribution of the current  ancestors and is equal to $ \beta
c(-t)  M_{t-}\, dt$, see Proposition \ref{prop:Y|R}. We deduce the
following result.

\begin{prop}
   \label{prop:M-mart}
The process $(M_t, t<0)$
is a càdlàg birth process starting from 0  at $-\infty $ with rate $\beta c(|t|)
\left(M_t+2\right) $ at time $t<0$. Equivalently, the process $(\tilde M_t,
t<0)$ defined by $\lim_{t\rightarrow -\infty } \tilde M_t=0$ and
\[
d\tilde M_t = dM_t - \beta c(|t|) \left(M_t+2\right) dt
\]
is a martingale (with respect to its natural filtration) whose jumps are
equal to 1.
\end{prop}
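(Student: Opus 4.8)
The plan is to reduce everything to the analysis of a single excursion grafted on the spine and then to superpose over the Poisson point measure $\cn'$. Fix $h>0$ and work under $\N$. Since $\{\max(H)\ge h\}=\{R_s^h\ge 1\}$ for $s<h$, I would first show that, conditionally on $\{\max(H)\ge h\}$, the process $(R_s^h,\ 0\le s<h)$ is a pure birth process started from $1$ which jumps from state $k$ to state $k+1$ at rate $k\beta c(h-s)$ at time $s$. The Markov property in $s$ follows from the branching property of $H$ recalled in Section~\ref{sec:genY}: conditionally on $(Y_r,r\le s)$ and on $R_s^h=k$, the $k$ excursions of $H$ above level $s$ that reach level $h$ are i.i.d.\ with law $\N[\,\cdot\mid\max(H)\ge h-s\,]$, being a thinning of the Poisson point measure of excursions above level $s$; hence $(R_{s'}^h,\ s'\ge s)$ equals a sum of $k$ independent copies of $(R_{s'-s}^{h-s}(\tilde H),\ s'\ge s)$ with $\tilde H$ of that law, so $(R_s^h)_s$ is a Markov process.

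To identify the rate I would compute the conditional Laplace transform. Conditioning first on $Y_s$, for which $R_s^h$ is Poisson with parameter $c(h-s)Y_s$, and using the identity $u(c(h-s),s)=c(h)$ from \reff{eq:uc=c}, one obtains
\[
\N\!\left[\mathrm{e}^{-\lambda R_s^h}\,;\ \max(H)\ge h\right]
=c(h)-u\!\left(c(h-s)\,(1-\mathrm{e}^{-\lambda}),\,s\right),
\]
so that $\phi_s(\lambda):=\N[\mathrm{e}^{-\lambda R_s^h}\mid\max(H)\ge h]=1-u(c(h-s)(1-\mathrm{e}^{-\lambda}),s)/c(h)$. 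Differentiating in $s$ and in $\lambda$, and using the forward equation in \reff{eq:backe} for $u$ together with the explicit form of $\psi$, one checks that $\phi_s$ solves $\partial_s\phi_s=-\beta c(h-s)(\mathrm{e}^{-\lambda}-1)\,\partial_\lambda\phi_s$, which is precisely the Kolmogorov forward equation of a pure birth process whose rate in state $k$ is $k\beta c(h-s)$; combined with the Markov property this forces the jumps to have size $1$ and the rate to be $\beta c(h-s)$ per current lineage. (Equivalently one may invoke the known structure of the reduced tree of a CB process, which is presumably the content of Proposition~\ref{prop:Y|R}.)

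For the superposition, write $M_t=\sum_{i\in I}\ind_{\{t_i<t\}}R_{t-t_i}^{-t_i}(H^i)$ and let $t$ increase towards $0$. A term enters the sum as $t$ crosses a birth time $t_i$, contributing its initial value $R_0^{-t_i}(H^i)=\ind_{\{\max(H^i)\ge -t_i\}}$; since $\cn'$ has intensity $2\beta\,dt\,\N[dH]$ and $\N[\max(H)\ge -t_i]=c(|t_i|)$, these immigration jumps have size $1$ and occur at rate $2\beta c(|t|)$ at time $t<0$. A term already present, at level $s=t-t_i$, has remaining height $-t_i-(t-t_i)=|t|$, so by the previous step it is a pure birth process with jumps of size $1$ at rate $\beta c(|t|)$ times its current value; summing over $i$ with $t_i<t$ gives the rate $\beta c(|t|)M_{t-}$. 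Since each component's rate depends only on its current value and on $t$, the superposition $M$ is Markov with total birth rate $\beta c(|t|)(M_{t-}+2)=\beta c(|t|)(M_t+2)$. That $M$ is c\`adl\`ag is clear from the construction, and $M$ starts from $0$ at $-\infty$ because $t\mapsto M_t$ is non-decreasing and $\rE[M_{-t}]=c(t)/\theta\to 0$ by \reff{eq:EM}. Finally, for $t<0$ one has $\rE\big[\int_{-\infty}^{t}\beta c(|s|)(M_s+2)\,ds\big]=\int_{-\infty}^{t}\beta c(|s|)\big(c(|s|)/\theta+2\big)\,ds<\infty$, since the integrand decays exponentially at $-\infty$ and the non-integrable singularity of $c$ at $0$ is not reached for $t<0$; hence the compensator is a.s.\ finite, and the equivalence with the martingale statement is the standard compensation of a counting process, the unit jumps being inherited from $M$.

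The delicate point is the single-excursion step: one has to set up the filtration correctly (the relevant part of each excursion is the part \emph{above} the current level, which is discovered only progressively) and to extract the full generator, namely that the jumps have size exactly $1$ and that each lineage branches at rate exactly $\beta c(h-s)$, rather than merely matching the first moment; this is where the branching property of the height process and the Laplace-transform computation above are both essential. The superposition step and the passage to the martingale formulation are then routine.
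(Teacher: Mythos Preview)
Your proof is correct and follows the same decomposition the paper sketches in the paragraph preceding the proposition: the birth rate splits into the immigration contribution $2\beta c(|t|)$ from the spine and the branching contribution $\beta c(|t|)M_{t-}$ from the existing ancestors. The paper simply cites Proposition~\ref{prop:Y|R} (the spine/Williams decomposition) for the latter rate, whereas you derive it by computing the Laplace transform $\phi_s(\lambda)=1-u(c(h-s)(1-e^{-\lambda}),s)/c(h)$ and checking the forward PDE; both routes are valid and closely related, and your version is more self-contained.
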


Similarly,  we can check the following result.
\begin{prop}
   \label{prop:M-back}
The process   $(M_{(-t)_-}, t>0)$
is a càdlàg death process with rate:
\[
M_{-t} |c'(t)|/c(t)=\beta
M_{-t}(2\theta+c(t)).
\]
 \end{prop}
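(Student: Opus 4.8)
The plan is to identify the jump rate of the backward process $(M_{(-t)_-}, t>0)$ directly from the jump structure established for the forward birth process in Proposition \ref{prop:M-mart}, using the known law of $M_{-t}$. Observe first that, since $(M_t, t<0)$ is a pure birth process whose jumps are all $+1$, the time-reversed process $(M_{(-t)_-}, t>0)$ is a pure death process whose jumps are all $-1$; its state space at time $t$ is $\{0,1,2,\dots\}$ and its one-dimensional marginal is the mixed-Poisson law from \reff{eq:EM}--\reff{eq:lapm}. So the content of the statement is purely the computation of the instantaneous death rate. The clean way to get it is the elementary ``time-reversal of rates'' identity: if a forward inhomogeneous Markov chain has upward rate $q_{n\to n+1}(t)$ at time $t<0$ and marginal $p_n(t)=\rP(M_t=n)$, then the reversed chain has downward rate at time $-t$ equal to
\[
q^{\leftarrow}_{n+1\to n}(t)=q_{n\to n+1}(t)\,\frac{p_n(t)}{p_{n+1}(t)}.
\]

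First I would record, from Proposition \ref{prop:M-mart}, that $q_{n\to n+1}(t)=\beta c(|t|)(n+2)$. Next I would compute the ratio $p_n(t)/p_{n+1}(t)$. Conditionally on $Z_{-t}$, $M_{-t}$ is Poisson with parameter $\mu:=c(t)Z_{-t}$, so $p_n = \rE[\expp{-\mu}\mu^n/n!]$; using that $Z_{-t}$ has the Gamma-type law coming from \reff{eq:exp-Z} (sum of two independent exponentials with mean $1/(2\theta)$), the mixed-Poisson probabilities $p_n(t)$ are explicit and one finds, after simplification, that
\[
(n+2)\,\frac{p_n(t)}{p_{n+1}(t)} = n\,\Bigl(1+\frac{2\theta}{c(t)}\Bigr),
\]
which combined with the previous display gives $q^{\leftarrow}_{n\to n-1}(t)=\beta c(|t|)\bigl(1+2\theta/c(t)\bigr)\,n = \beta n (c(t)+2\theta)$ at the state $M_{-t}=n$. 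Finally I would check the two asserted forms agree: from \reff{eq:defc} one has $c(t)=2\theta/(\ebt-1)$, hence $c'(t) = -2\beta\theta\cdot 2\theta\,\ebt/(\ebt-1)^2 = -\beta c(t)(c(t)+2\theta)$, so $|c'(t)|/c(t)=\beta(2\theta+c(t))$, matching the claim $M_{-t}|c'(t)|/c(t)$.

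An alternative, perhaps cleaner route avoids the explicit mixed-Poisson algebra: use the reconstruction/consistency of the Poisson point measure $\cn'$ directly. Conditionally on the configuration of excursions contributing to $M_{-t}=n$, each of the $n$ current ancestral lineages at backward time $t$, together with the excursion it sits in, independently fails to reach back to backward time $t+dt$; by the branching property and the normalization $\N[\max(H)\ge r]=c(r)$, the probability that a given lineage present at level (height) $t$ does not survive to level $t+dt$ is governed by $-c'(t)\,dt/c(t)$ — this is exactly the hazard rate of the lifetime of an excursion conditioned to exceed $t$, since $\N[\zeta>t]=c(t)$. Summing over the $n$ independent lineages yields death rate $n\,|c'(t)|/c(t)$, and the computation $|c'(t)|/c(t)=\beta(2\theta+c(t))$ from the previous paragraph finishes it. The main obstacle in either approach is purely bookkeeping: in the first route, carrying out the mixed-Poisson ratio without error; in the second, justifying rigorously that conditionally on $M_{-t}$ the surviving-lineage indicators are independent with the stated hazard, which is where one must invoke the branching property and the Poisson-point-measure structure of excursions above a level (as recalled in Section \ref{sec:genY}) rather than hand-wave it.
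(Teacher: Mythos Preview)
The paper offers no proof beyond the sentence ``Similarly, we can check the following result'' after Proposition~\ref{prop:M-mart}, so there is nothing detailed to compare against; both of your routes are legitimate ways to supply the missing argument.

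Your first route is methodologically sound but contains an off-by-one slip in the key identity. From the Gamma$(2,2\theta)$ law of $Z_{-t}$ one gets the negative-binomial marginal $p_n(t)=(2\theta)^2(n+1)\,c(t)^n/(2\theta+c(t))^{n+2}$, hence
\[
(n+2)\,\frac{p_n(t)}{p_{n+1}(t)}=(n+1)\Bigl(1+\frac{2\theta}{c(t)}\Bigr),
\]
with $n+1$ on the right, not $n$. The death rate from state $n$ is then
\[
q^{\leftarrow}_{n\to n-1}(t)=q_{n-1\to n}(t)\,\frac{p_{n-1}(t)}{p_n(t)}
=\beta c(t)(n+1)\cdot\frac{n}{n+1}\Bigl(1+\frac{2\theta}{c(t)}\Bigr)
=\beta n\bigl(2\theta+c(t)\bigr),
\]
so your final answer survives. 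You anticipated exactly this bookkeeping hazard.

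Your second route is the right heuristic, and it is cleanly rigorous for the \emph{forward} process $(M_0^t,t>0)$ via representation~\reff{eq:M=zeta}: there the $n$ lineages really are independent excursion lifetimes under $\N$, each with hazard $|c'(t)|/c(t)$ since $\N[\zeta>t]=c(t)$. For the backward process $(M_{-t},t>0)$, however, the assertion that the $n$ ancestral lineages ``independently fail to reach back'' with that same hazard is not an immediate consequence of the branching property and the excursion structure above a level; it is essentially the content of Lemma~\ref{lem:MZ} and Remark~\ref{rem:AldPop}, which come later in the paper. So if you want a self-contained argument at this point in the exposition, your time-reversal computation is the route to take; if you are willing to forward-reference, your second route is the more conceptual one.
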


We can also recover the process $(Z_t, t<0)$ from $(M_t, t<0)$ by
grafting CB processes on the number of ancestors process. Notice there is a
contribution from the immortal individual with rate $2\beta
\N\left[dY; \zeta<|t|\right]\, dt$ (as we do not take into account the
contributions which reach the current time $0$) and from the
genealogical tree, according to Proposition \ref{prop:Y|R}, we have the
contributions of $ Y^{\text{(g)},i}$ and we only keep the
contributions of $ Y^{\text{(d)},i}$ which do not reach the current time
$0$; this gives a contribution with rate   $2\beta M_t
\N\left[dY; \zeta<|t|\right]\, dt$. Therefore, we have the following result.

\begin{prop}
   \label{prop:ZfromM}
Let $\sum_{i \in I} \delta_{t_i, \tilde Y_i}(dt, dY)$  be, conditionally on
$(M_t, t<0)$, a Poisson point measure  on
$(-\infty ,0) \times \D$ with intensity:
\[
2\beta (M_t+1)  \N\left[dY; \zeta<|t|\right]\, dt.
\]
Then, conditionally on
$(M_t, t<0)$, the process $(\tilde Z_t, t<0)$ is distributed as $(Z_t, t<0)$ where
for all $t<0$:
\[
\tilde Z_t= \sum_{t_i \leq t} \tilde Y_{t-t_i}^i.
\]
\end{prop}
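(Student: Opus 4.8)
The plan is to turn the decomposition sketched just above the statement into a proof. The first step is to consider the genealogical tree of $Z$ coded by $\cn'(dt,dH)$, together with its \emph{backbone}: the union of all lineages whose descendants are alive at the present time $0$. At a time $t<0$ this backbone has exactly $M_t+1$ branches — the infinite spine, plus the $M_t$ ancestral branches counted by $M_t=M_t^0$ — and all the remaining mass is organised into \emph{decorations}: the maximal sub-excursions that become extinct strictly before time $0$, each grafted at some time $t_i\in(-\infty,0)$ onto one of the backbone branches. Since at each level the backbone reduces to finitely many points, which the (diffuse) local time $Z_t$ does not charge, we have a.s.\ $Z_t=\sum_{t_i\le t}\tilde Y^i_{t-t_i}$ for every $t<0$, where $\tilde Y^i$ is the mass process of the $i$-th decoration; this identity and the law of the decorations described below are what Proposition~\ref{prop:Y|R} provides branch by branch (see also \cite{dlg:rtlpsbp}).

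Next I would identify, conditionally on the whole backbone, the law of the decoration point measure $\sum_i\delta_{(t_i,\tilde Y^i)}$. Along the spine the excursions of $\cn'$ arrive at rate $2\beta\,dt\,\N[dY]$; discarding those with $\zeta\ge|t|$ — which reach time $0$ and hence lie on the backbone — leaves decorations at rate $2\beta\,\N[dY;\zeta<|t|]\,dt$. Along each of the $M_t$ ancestral branches at time $t$, the Williams-type decomposition of Proposition~\ref{prop:Y|R} produces two families of sub-excursions hanging off the branch: one, $Y^{(\mathrm g),i}$, automatically extinct before time $0$ and kept in full, and one, $Y^{(\mathrm d),i}$, of which only the atoms with $\zeta<|t|$ are kept (the others being themselves backbone branches — births of the $M$-process). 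Under the normalisation $\N[\max(H)\ge r]=c(r)$ each family contributes decorations at rate $\beta\,\N[dY;\zeta<|t|]\,dt$, so a branch contributes at rate $2\beta\,\N[dY;\zeta<|t|]\,dt$, exactly as the spine does; and by the branching property the contributions of the spine and of the distinct branches are independent. Hence, conditionally on the backbone, $\sum_i\delta_{(t_i,\tilde Y^i)}$ is a Poisson point measure on $(-\infty,0)\times\D$ of intensity $2\beta(M_t+1)\,\N[dY;\zeta<|t|]\,dt$. (As a check, the \emph{discarded}, surviving sub-excursions arrive at rate $2\beta c(|t|)\,dt$ along the spine and $\beta c(|t|)\,dt$ per branch, matching the birth rate $\beta c(|t|)(M_t+2)\,dt$ of Proposition~\ref{prop:M-mart}.)

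It remains to replace conditioning on the backbone by conditioning on $(M_t,t<0)$. Forgetting on which branch each atom sits, the conditional intensity $2\beta(M_t+1)\,\N[dY;\zeta<|t|]\,dt$ depends on the backbone only through $(M_t,t<0)$; since a Poisson point measure is characterised by its intensity, the conditional law of $\sum_i\delta_{(t_i,\tilde Y^i)}$ given the backbone equals its conditional law given $(M_t,t<0)$, which is exactly the law in the statement. As $\tilde Z$ is a measurable functional of this point measure, it follows that the conditional law given $(M_t,t<0)$ of $(\tilde Z_t,t<0)$ — with $\tilde Z$ built from the fresh point measure of the statement — coincides with the conditional law given $(M_t,t<0)$ of $\bigl(\sum_{t_i\le t}\tilde Y^i_{t-t_i},\,t<0\bigr)=(Z_t,t<0)$; integrating over the law of $(M_t,t<0)$ then recovers the law of $Z$, as it must.

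The delicate step is the second one: one must verify that the single-branch decomposition of Proposition~\ref{prop:Y|R} glues along the whole, time-inhomogeneous backbone — which becomes infinitely branched as $t\uparrow 0$ — into one Poisson point measure with \emph{exactly} intensity $2\beta(M_t+1)\,\N[dY;\zeta<|t|]\,dt$, the crucial point being the cancellation by which ``all of $Y^{(\mathrm g)}$ together with the $\zeta<|t|$ part of $Y^{(\mathrm d)}$'' collapses to the clean per-branch rate $2\beta\,\N[dY;\zeta<|t|]\,dt$; and one must justify that the backbone carries no mass, so that $Z_t=\sum_{t_i\le t}\tilde Y^i_{t-t_i}$ pathwise. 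A more computational route would bypass the pathwise picture altogether: compute $\rE[\exp(-\sum_j\lambda_j\tilde Z_{t_j})\mid(M_t,t<0)]$ via the exponential formula for Poisson point measures together with \reff{eq:NeYzeta} and \reff{eq:yrys0}, integrate it against the law of $(M_t,t<0)$ using \reff{eq:lapm} and the Laplace functional of the stationary CB process read off from \reff{eq:defcn}--\reff{defz}, and match the result with $\rE[\exp(-\sum_j\lambda_j Z_{t_j})]$; since finite-dimensional marginals determine the law of a process, this suffices.
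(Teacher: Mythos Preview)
Your proposal is correct and follows exactly the approach the paper takes: the paper gives no formal proof of this proposition beyond the explanatory paragraph immediately preceding it, and that paragraph is precisely the backbone/decoration decomposition via Proposition~\ref{prop:Y|R} that you have fleshed out. Your computation of the per-branch decoration rate $2\beta\,\N[dY;\zeta<|t|]\,dt$ (all of $Y^{(\mathrm g)}$ plus the $\zeta<|t|$ part of $Y^{(\mathrm d)}$) and of the spine contribution matches the paper's sketch verbatim.
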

Moments for the process $(M_t, t<0)$ are given in Section
\ref{sec:MomZ}.

We deduce from Proposition \ref{prop:ZfromM}, the following remarkable formula on the weighted integral of
the number of ancestors process.

\begin{cor}
   \label{cor:exp-M}
Let $t>0$. We have:
\[
\rE\left[\expp{- 2\beta \int _t^{+\infty } dr\, \big(c(r-t)-
      c(r)\big)M_{-r} }\right]
= \left(\frac{2\theta}{2\theta+c(t)}\right)^2=\rE\left[\expp{-c(t)
    Z_0}\right].
\]
\end{cor}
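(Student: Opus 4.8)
The plan is to use the reconstruction of $Z$ from the ancestor process given in Proposition~\ref{prop:ZfromM}. Fix $t>0$. By Proposition~\ref{prop:ZfromM}, conditionally on $(M_s, s<0)$, the value $Z_0$ is a sum over the atoms of a Poisson point measure: writing $\sum_{i\in I}\delta_{s_i,\tilde Y^i}$ for the reconstruction measure with intensity $2\beta(M_s+1)\,\N[dY;\zeta<|s|]\,ds$ on $(-\infty,0)\times\D$, we have $Z_0 = \sum_{s_i\le 0}\tilde Y^i_{-s_i}$. The point of the restriction $\zeta<|s|$ in the intensity is precisely that these grafted excursions die before time $0$, so $\tilde Y^i_{-s_i}$ is the \emph{final} mass $Y_{-s_i}$ of an excursion conditioned to have lifetime $\zeta<-s_i$; but in fact we only need its Laplace functional under $\N[dY;\zeta<|s|]$. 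First I would compute, for $\mu\ge 0$ and $s<0$, the quantity $\N\big[(1-\expp{-\mu Y_{-s}})\ind_{\{\zeta<|s|\}}\big]$. Using $\ind_{\{\zeta<|s|\}} = 1-\ind_{\{\zeta>|s|\}}$ together with \eqref{eq:defc} and \eqref{eq:NeYzeta} (and $Y_{-s}=0$ on $\{\zeta<|s|\}$ contributing nothing to the exponential), this gives
\[
\N\big[(1-\expp{-\mu Y_{-s}})\ind_{\{\zeta<|s|\}}\big]
= c(|s|) - u(\mu,|s|).
\]

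Next I would apply the exponential formula for Poisson point measures. Conditionally on $(M_s,s<0)$,
\[
\rE\!\left[\expp{-\mu Z_0}\,\big|\,(M_s,s<0)\right]
= \exp\!\left(-2\beta\int_{-\infty}^0 (M_s+1)\,\N\big[(1-\expp{-\mu Y_{-s}})\ind_{\{\zeta<|s|\}}\big]\,ds\right)
= \exp\!\left(-2\beta\int_{0}^{\infty} (M_{-r}+1)\big(c(r)-u(\mu,r)\big)\,dr\right),
\]
after the substitution $r=-s$. Now I would specialize $\mu=c(t)$ for the fixed $t>0$: by the semigroup identity \eqref{eq:uc=c} we have $u(c(t),r)=c(t+r)$, so $c(r)-u(c(t),r)=c(r)-c(r+t)$. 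Here the key observation is that $c(r)-c(r+t)$ should be matched with the integrand $c(r-t)-c(r)$ appearing in the statement; these are not literally equal, but shifting the variable of integration by $t$ turns one into the other on the relevant range. Concretely, $\int_t^{\infty}(c(r-t)-c(r))M_{-r}\,dr$ after $r\mapsto r+t$ becomes $\int_0^{\infty}(c(r)-c(r+t))M_{-r-t}\,dr$; the mismatch is that the reconstruction formula produces $M_{-r}$ where we want $M_{-r-t}$. I will need to be slightly more careful: the reconstruction in Proposition~\ref{prop:ZfromM} recovers $Z_0$ from $(M_s,s<0)$, but to get $M_{-r-t}$ one should instead apply the reconstruction to recover $Z_{-t}$ from $(M_s, s<-t)$, giving
\[
\rE\!\left[\expp{-\mu Z_{-t}}\,\big|\,(M_s,s<-t)\right]
= \exp\!\left(-2\beta\int_{t}^{\infty} (M_{-r}+1)\big(c(r-t)-u(\mu,r-t)\big)\,dr\right),
\]
and then choosing $\mu$ suitably. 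Taking $\mu\to\infty$ so that $u(\mu,r-t)\to c(r-t)$ is not what we want either; rather, I keep $\mu$ finite and use that the conditional law of $Z_{-t}$ given the ancestor process factors.

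The cleanest route is the following. Recall that conditionally on $(Z_{-u},u\ge t)$ the variable $M_{-t}$ is Poisson with mean $c(t)Z_{-t}$, hence $\rE[\expp{-\lambda M_{-t}}\mid Z_{-t}] = \expp{-(1-\expp{-\lambda})c(t)Z_{-t}}$; but more useful is that we can write $\rE[\expp{-c(t)Z_0}]$ directly via \eqref{eq:exp-Z}: $\rE[\expp{-c(t)Z_0}] = (1+c(t)/2\theta)^{-2} = (2\theta/(2\theta+c(t)))^2$, which is the right-hand side. So it remains only to show the left-hand side equals $\rE[\expp{-c(t)Z_0}]$. For this I apply Proposition~\ref{prop:ZfromM} to reconstruct $Z_0$: combining the two displays above with $\mu$ chosen as a running parameter, I get
\[
\rE\!\left[\expp{-c(t)Z_0}\right]
= \rE\!\left[\exp\!\left(-2\beta\int_0^\infty (M_{-r}+1)\big(c(r)-c(r+t)\big)\,dr\right)\right],
\]
using $u(c(t),r)=c(t+r)$. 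Separating the deterministic part $\expp{-2\beta\int_0^\infty(c(r)-c(r+t))\,dr}$: by \eqref{eq:int-c}, $\beta\int_0^\infty c(r)\,dr$ diverges, but the difference $\beta\int_0^\infty(c(r)-c(r+t))\,dr = \beta\int_0^t c(r)\,dr = \log\big((2\theta+c(t))/(2\theta)\big)$ using $c(0+)=\infty$ handled via the limiting form of \eqref{eq:intc} as the lower limit tends to $0$; more precisely $\beta\int_\varepsilon^\infty c(r)\,dr - \beta\int_{\varepsilon+t}^\infty c(r)\,dr = \beta\int_\varepsilon^{\varepsilon+t}c(r)\,dr \to \beta\int_0^t c(r)\,dr$, and by \eqref{eq:int-c} this equals $-\log(1-\expp{-2\beta\theta\cdot 0})+\cdots$ — I'll instead use $\beta\int_0^t c(r)\,dr$ computed as $\log(1+c(t)/2\theta) - \lim_{\varepsilon\to 0}\log(1+c(\varepsilon)/2\theta)$ which diverges, so the deterministic factor alone is not finite and the expectation genuinely couples the deterministic and random parts. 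Hence the honest computation keeps everything together:
\[
-2\beta\int_t^\infty (c(r-t)-u(\mu,r-t))M_{-r}\,dr
\quad\text{(after }r\mapsto r+t\text{)}\quad
= -2\beta\int_0^\infty (c(r)-u(\mu,r))M_{-r-t}\,dr,
\]
and one recognizes this, via Proposition~\ref{prop:ZfromM} applied at time $-t$, as $\log\rE[\expp{-\mu Z_{-t}}\mid (M_s,s<-t)]$ minus its deterministic ``$+1$'' part. Setting $\mu=c(t)$ and using $u(c(t),r)=c(t+r)$ gives $c(r)-u(c(t),r)=c(r)-c(r+t)$, but the integrand we want is $c(r-t)-c(r)$ evaluated at $M_{-r}$, i.e. after the shift $M_{-r-t}$ against $c(r)-c(r+t)$ — these match iff we re-index.

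The main obstacle is exactly this bookkeeping: aligning the index of $M$ with the correct shift of $c$ and correctly absorbing the contribution of the ``$+1$'' (immortal individual) term into a finite deterministic factor despite $\int_0^\infty c = \infty$. I expect the resolution is that the ``$+1$'' contributes $\expp{-2\beta\int_0^\infty(c(r)-u(\mu,r))\,dr}$, and \eqref{eq:int-u} gives $\beta\int_0^\infty u(\mu,r)\,dr=\log(1+\mu/2\theta)$ while $\beta\int_0^\infty c(r)\,dr$ is infinite, so in fact the correct reconstruction integral must have the roles arranged so that the combination is $\int_0^\infty(u(\mu,r)-u(\nu,r))\,dr$ for two finite parameters $\mu,\nu$ — which points to the conclusion that the left-hand side of the Corollary, with its specific integrand $c(r-t)-c(r)=u(c(t),r-t)\big|_{\cdots}$, is built precisely so that after taking the conditional expectation over the reconstruction PPM one lands on $\rE[\expp{-c(t)Z_0}]$ with all divergences cancelling. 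Once the index alignment is pinned down, the remaining steps — the exponential formula, \eqref{eq:NeYzeta}, \eqref{eq:uc=c}, \eqref{eq:exp-Z} — are routine, and I would present the argument in the order: (1) Laplace functional of $\N[\,\cdot\,\ind_{\{\zeta<|s|\}}]$; (2) exponential formula under Proposition~\ref{prop:ZfromM}; (3) substitution and the identity $u(c(t),\cdot)=c(t+\cdot)$; (4) identification of the result with $\rE[\expp{-c(t)Z_0}]$ via \eqref{eq:exp-Z}.
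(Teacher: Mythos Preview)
There is a genuine gap, and it sits exactly at the two places where you felt uneasy.

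First, Proposition~\ref{prop:ZfromM} reconstructs $(Z_s,s<0)$, not $Z_0$: every grafted excursion at time $t_i<0$ is conditioned on $\zeta<|t_i|$ and therefore has mass zero at time $0$. You even note that $Y_{-s}=0$ on $\{\zeta<|s|\}$; but then $(1-\expp{-\mu Y_{-s}})\ind_{\{\zeta<|s|\}}\equiv 0$, so your displayed identity $\N[(1-\expp{-\mu Y_{-s}})\ind_{\{\zeta<|s|\}}]=c(|s|)-u(\mu,|s|)$ is false (the left side vanishes). The same error reappears when you pass to $Z_{-t}$: the correct integrand is obtained via the Markov property at time $r-t$, which inserts an extra factor $\expp{-c(t)Y_{r-t}}$ from the conditioning $\{\zeta<r\}$, giving
\[
\N\big[(1-\expp{-\mu Y_{r-t}})\ind_{\{\zeta<r\}}\big]
=u(\mu+c(t),\,r-t)-u(c(t),\,r-t)
=u(\mu+c(t),\,r-t)-c(r),
\]
and \emph{not} $c(r-t)-u(\mu,r-t)$. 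The shift by $c(t)$ in the first argument is the whole point; without it the integrand is wrong and, as you discovered, the deterministic ``$+1$'' contribution diverges.

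Second, the missing idea that resolves the divergence is a limit in $\mu$, not a bookkeeping fix. With the correct integrand, the ``$+1$'' part is finite for every finite $\mu$ (by \eqref{eq:int-u} and \eqref{eq:int-c} it equals $2\log\frac{2\theta+\mu+c(t)}{2\theta+c(t)}$), and combining with $\rE[\expp{-\mu Z_{-t}}]=(1+\mu/2\theta)^{-2}$ yields an explicit closed form for
\[
\rE\Big[\exp\Big(-2\beta\int_t^\infty\big(u(\mu+c(t),r-t)-c(r)\big)M_{-r}\,dr\Big)\Big].
\]
Only \emph{after} this does one send $\mu\to\infty$, so that $u(\mu+c(t),r-t)\to c(r-t)$ produces the integrand $c(r-t)-c(r)$ of the Corollary, while the explicit right-hand side converges to $(2\theta/(2\theta+c(t)))^2$. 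Your proposal never reaches this step because the Laplace functional was miscomputed upstream; once corrected, steps (1)--(4) of your outline do go through, with the addition of this final limit.
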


\begin{proof}
   According to Proposition \ref{prop:ZfromM}, we have:
\[
\rE\left[\expp{-\lambda Z_t} \right]
=\rE\left[\exp\left(- 2\beta \int _t^{+\infty }dr\,  (M_{-r}+1)
\N\left[(1-\expp{-\lambda Y_{r-t}})\ind_{\{\zeta<r\}}\right]
  \right)\right].
\]
Notice that:
\[
\N\left[(1-\expp{-\lambda Y_{r-t}})\ind_{\{\zeta<r\}}\right]=
u(\lambda+c(t), r-t) - u(c(t), r-t)=u(\lambda+c(t), r-t) - c(r).
\]
Thanks to \reff{eq:int-u} and \reff{eq:int-c}, we get:
\[
2\beta \int _t^{+\infty }dr\, \big(u(\lambda+c(t), r-t) - c(r)\big)
= 2 \log \left(1+ \frac{\lambda + c(t)}{2\theta} \right) - 2\log
\left(1+ \frac{ c(t)}{2\theta} \right) .
\]
Then use \reff{eq:exp-Z} to get:
\[
\rE\left[\exp\left(- 2\beta \int _t^{+\infty }dr\,\big(u(\lambda+c(t),
    r-t) - c(r)\big) M_{-r}  \right)\right]
= \left(\frac{2\theta}{2\theta +\lambda}\right)^2 \left(\frac{2\theta
      +\lambda+c(t)}{2\theta + c(t)}\right)^2.
\]
Letting $\lambda$ goes to infinity and  \reff{eq:exp-Z} give  the result.
\end{proof}

\section{Time reversal of the number of ancestors process}
\label{sec:timereversal}
The next result is in a sense a consequence of the time
reversibility of  the process $Y$ with respect to its lifetime
$\zeta$.

\begin{lem}
   \label{lem:MZ}
The random variable $(Z_0, (M_{-t}, t>0))$ and $(Z_0, (M_0^{t}, t>0))$ have the same
distribution.
\end{lem}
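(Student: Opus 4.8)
The plan is to exhibit a common measurable functional of a single Poisson point measure whose two expressions give $(Z_0, (M_{-t}, t>0))$ on one side and $(Z_0, (M_0^t, t>0))$ on the other, so that the identity in law follows from a symmetry of the Poisson intensity together with the time-reversibility of the excursion $H$ (equivalently of $Y$) about its lifetime $\zeta$. First I would recall that, by \reff{eq:defcn} and \eqref{defz}, the pair $(Z_0,(M_{-t},t>0))$ is built from $\cn'(dt,dH)=\sum_{i\in I}\delta_{(t_i,H^i)}$, a Poisson point measure on $\R\times\D$ with intensity $2\beta\,dt\,\N[dH]$, via $Z_0=\sum_{t_i\le 0}Y^i_{-t_i}$ and $M_{-t}^0=\sum_{i\in I}\ind_{\{t_i<-t\}}R^{-t_i}_{-t-t_i}(H^i)$, where only atoms with $t_i<0$ contribute. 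Likewise $(Z_0,(M_0^t,t>0))$ is built from the \emph{same} kind of Poisson point measure but now using atoms with $t_i<0$ for $Z_0$ and atoms with $0<t_i<t$ for $M_0^t$, together with $M_0^t=\sum_i\ind_{\{0<t_i<t\}}R^{t-t_i}_{-t_i+0^+}(\cdots)$ — more precisely $M_0^t=\sum_i \ind_{\{t_i<0\}}R^{-t_i}_{t-t_i}(H^i)$, counting excursions above level $-t_i$ that reach level $t-t_i$.

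The key observation is that, for a single excursion $H$ started at time $t_i<0$, the quantity $R^{-t_i}_{-t-t_i}(H)$ counts the sub-excursions of $H$ above level $-t_i$ that climb to height $-t$ (i.e. an extra height $-t-(-t_i)$), whereas $R^{-t_i}_{t-t_i}(H)$ counts those above level $-t_i$ climbing to height $t$. Reflecting the height process of each excursion about the midpoint of its lifetime — equivalently using that $(\ell^r_\zeta(H), r\ge 0)$ under $\N$ is invariant under $r\mapsto H_{\max}-r$ in the appropriate sense, which is the time-reversibility of $Y$ with respect to $\zeta$ alluded to just before the lemma — exchanges "excursions high up that reach the top" with "excursions near the bottom that were present initially," and thereby turns the family $(R^{-t_i}_{-t-t_i})_{t>0}$ into $(R^{-t_i}_{t-t_i})_{t>0}$ while leaving $Y^i_{-t_i}=\ell^{-t_i}_\zeta(H^i)$ — hence $Z_0$ — unchanged. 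Combined with the fact that $\N[dH]$ is invariant under this reflection and that the atoms $\{t_i\}$ are unaffected, applying this reflection independently to every $H^i$ (a measure-preserving bijection of the space supporting $\cn'$) sends the functional producing $(Z_0,(M_{-t}^0,t>0))$ to the functional producing $(Z_0,(M_0^t,t>0))$, giving the claimed equality in distribution.

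I would organize the argument in three steps: (1) write both pairs as explicit functionals of one Poisson point measure $\cn'$ with intensity $2\beta\,dt\,\N[dH]$, isolating the dependence on each excursion $H^i$ and on $-t_i$; (2) prove the single-excursion identity: under $\N$, jointly over the level shift, $(\ell^{-t_i}_\zeta(H), (R^{-t_i}_{-t-t_i}(H))_{t>0})$ has the same law as $(\ell^{-t_i}_\zeta(H),(R^{-t_i}_{t-t_i}(H))_{t>0})$, via time-reversal of $H$ about $\zeta$ and the standard Poissonian description of sub-excursions above a level (the intensity $Y_a\N[dH]$ recalled in Section \ref{sec:genY}); (3) integrate over the Poisson point measure, using its restriction/independence properties over disjoint sets of atoms and the invariance of $\N[dH]$ under the reflection. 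The main obstacle is step (2): one must make the time-reversal of the excursion precise at the level of the height (or exploration) process rather than just the mass process $Y$, and check that the reflection simultaneously matches \emph{all} the counting functionals $(R^{-t_i}_{-t-t_i})_{t>0}$ to $(R^{-t_i}_{t-t_i})_{t>0}$ in a joint (process-in-$t$) sense, not merely marginally; once the joint single-excursion statement is in hand, the Poissonization in step (3) is routine. (I expect the paper to exploit its Proposition \ref{prop:Y|R}-type decomposition in lieu of, or alongside, this reflection argument.)
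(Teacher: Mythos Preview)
Your step (2) --- the single-excursion identity --- is where the argument breaks down. Fix an atom with $t_i<0$ and set $a=-t_i$. You are claiming that under $\N$,
\[
\bigl(Y_a,\ (R_{a-t}^{\,a})_{0<t<a}\bigr)\ \stackrel{\text{(d)}}{=}\ \bigl(Y_a,\ (R_a^{\,a+t})_{t>0}\bigr),
\]
but already the first moments disagree: $\N[R_{a-t}^{\,a}]=c(t)\,\N[Y_{a-t}]=c(t)\expp{-2\beta\theta(a-t)}$, whereas $\N[R_a^{\,a+t}]=c(t)\,\N[Y_a]=c(t)\expp{-2\beta\theta a}$. So no measure-preserving map of $H$ alone (with $t_i$ fixed) can carry one to the other. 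You have also conflated two different reflections: reversing $H$ about the midpoint of its \emph{exploration time} $\sigma$ (i.e.\ $\tilde H(s)=H(\sigma-s)$) leaves all local times, hence $Y$ and every $R_a^b$, unchanged; the genuine symmetry is the reversal of $Y$ about its \emph{lifetime} $\zeta$, which sends level $a$ to level $\zeta-a$ and therefore does not fix $Y_a$ or match the two families above at the same level. Your sentence ``the atoms $\{t_i\}$ are unaffected'' is precisely the problem.

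There \emph{is} a reflection argument, but it is global: send each atom $(t_i,H^i)$ to $\bigl(-(t_i+\zeta_i),\ \text{flip}(H^i)\bigr)$, where $\text{flip}$ is the tree flip realizing $Y_r\mapsto Y_{\zeta_i-r}$. This preserves the intensity $2\beta\,dt\,\N[dH]$ (the $t$-shift by $-\zeta(H)$ is Lebesgue-preserving for each fixed $H$, and $\N$ is flip-invariant), fixes $Z_0$, and genuinely interchanges $M_{-t}^0$ with $M_0^t$ because it is time-reversal of the whole genealogical picture about $0$. If you pursue this route you must still make the tree flip precise at the level of the height process and check it acts on $R_a^b$ as claimed; this is not the simple $s\mapsto\sigma-s$ operation. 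The paper, by contrast, does not use a reflection at all: it first observes that $Z_0=\lim_{t\downarrow 0}M_{-t}/c(t)=\lim_{t\downarrow 0}M_0^t/c(t)$ a.s., so it suffices to match the laws of the two processes; since both are Markov in $t$, it then computes the two-dimensional Laplace transforms $\rE[\expp{-\lambda M_{-r}-\mu M_{-t}}]$ and $\rE[\expp{-\lambda M_0^r-\mu M_0^t}]$ explicitly (via the decomposition \reff{eq:decomm} and the binomial thinning in Corollary \ref{cor:binR}) and checks they coincide.
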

This result will be generalized in  Theorem \ref{theo:time-reversal}.

\begin{rem}
   \label{rem:AldPop}
Up to a random labeling of the individuals, see Remark \ref{rem:planar}
in our setting, the lineage tree defined in \cite{ap:cbpmb} or \cite{p:agcbp}
of the
population living at time 0 is given by the coalescent times of the
genealogical tree  or equivalently by the jumping times of the process
$(M_t, <>0)$:
\[
\ca=\{|t|; t<0 \text{ s.t. } M_t
-M_{t_-} =1\}.
\]

Let $\sum_{j\in J} \delta_{x_j,\hat Y^j}$  be a Poisson point measure on
$(0,+\infty )\times  \D$ with intensity  $dx \N[dY]$ and  independent of
$Z_0$.  Let  $\hat\zeta_j$  denote   the  lifetime  of  $\hat  Y^j$.  By
considering the genealogies and using the branching property, we get:
\begin{equation}
   \label{eq:M=zeta}
(M_0^{t}, t>0) \stackrel{\text{(d)}}{=} \left(\sum_{x_j<Z_0}
  \ind_{\{\hat \zeta_j\geq t\}}, t>0\right).
\end{equation}
Then, thanks to Lemma \ref{lem:MZ}, we deduce that the coalescent times
$\ca$ are distributed as the family of lifetimes:
\[
\ca \stackrel{\text{(d)}}{=} \{\hat \zeta_j; j\in
J\text{ s.t. } x_j<Z_0\}.
\]
Notice that by construction, $\sum_{j\in J} \delta_{x_j,\hat \zeta_j}$
is  a Poisson point measure on $(0,+\infty )^2$
 with intensity  $dx |c'(t)| dt$ and  independent of
$Z_0$.

This result is similar to the one in \cite{ap:cbpmb} or \cite{p:agcbp}
for a critical CB process (corresponding to $\theta=0$ in our framework)
born in the past according to the Lebesgue measure on $(-\infty , 0)$,
with the intensity of the corresponding Poisson point measure on
$(0,1)\times (0,+\infty )$ given by  $dx\,  t^{-2}  dt$; see also  \cite{g:crp}
for extensions concerning the model developed in \cite{p:agcbp}.
(Notice the two
intensities are similar near  $0$ as
$|c'(t)| \sim_{0+} 1/ (\beta t^2) $.)
Similar results are given for other models, see \cite{lp:cppbt} for
non-quadratic CB process,   and \cite{l:apcpp} for
Crump-Mode-Jagers processes.
\end{rem}

\begin{proof}
Notice that a.s. $Z_0=\lim_{t\to 0+}{M_{-t}}/{c(t)}$. Thanks to
\reff{eq:limM/c} and  $c(t)=\N[\zeta\geq t]$,   we can deduce from
\reff{eq:M=zeta}, using standard results on Poisson point measure,
that a.s. $Z_0=\lim_{t\to 0+}{M_0^{t}}/{c(t)}$. This and the fact that
$(M_{-t}, t>0)$ and $(M_0^{t}, t>0)$ are Markov processes, imply that it is enough
to check
that $(M_{-t}, M_{-r})$ and $(M_0^t, M_0^r)$ have the same
distribution for $r>t>0$ to prove the Lemma.

Let $r>t>0$.
On one hand, notice that each of the  $ M_0^t$  ancestors at time $0$ of
the population living at time $t$ generate independently a population
(at time 0) distributed according to $\N[dY|\zeta>t]$. This implies that
\begin{equation}
   \label{eq:repMr}
M_0^r\stackrel{\text{(d)}}{=}\sum_{i=1}^ {M_0^t} \ind_{\{\tilde
    \zeta_i>r\}},
\end{equation}
where  $(\tilde Y^i, i\in \N^*)$ are independent, independent of $M_0^t$
and distributed according to $\N[dY|\zeta>t]$. This readily implies that
$M_0^r$  is,   conditionally  on  $M_0^t$,  binomial  with
parameter $\left(M_0^t,\frac{c(r)}{c(t)}\right)$.  (This could have been
deduced from Corollary \ref{cor:binR}.)
Thus using
\reff{eq:lapm}, we have for $\lambda>0$ and $\mu>0$:
\begin{align}
\nonumber
\rE\left[\expp{-\lambda M_0^r -\mu M_0^t}\right]
&=\rE\left[\left(\expp{-\mu} \left(1 - \frac{c(r)}{c(t)}
      (1-\expp{-\lambda}) \right) \right)^ {M_0^t}\right]\\
\nonumber
&= \left(1+\frac{c(t)}{2\theta}\left(1-\expp{-\mu}\
\left(1-\frac{c(r)}{c(t)}+\frac{c(r)}{c(t)}\expp{-\lambda}\right)\right)\right)^{-2}\\
\label{eq:LapMrMt}
&=\left(1+\frac{c(t)}{2\theta}(1-\expp{-\mu})
+\frac{c(r)}{2\theta}(1-\expp{-\lambda})\expp{-\mu}\right)^{-2}.
\end{align}

On the other hand, given  $M_{-r}$,  $M_{-t}$  can be decomposed into    two parts:
\begin{equation}
\label{eq:decomm}
 M_{-t}=M^{I[-r, -t]}+\sum_{j=1}^{M_{-r}} \tilde M_{r-t}^{r, j},
\end{equation}
where
\begin{enumerate}
\item[(i)] $M^{I[-r, -t]}$ is the number of ancestors at time $-t$ of
  population living at time $0$  corresponding to a population $Y^i$
  (see definition \reff{eq:defcn})
  with immigration time $t_i$ belonging to $(-r, -t)$ and $M^{I[-r,
    -t]}$ is independent of $M_{-r}$.
\item[(ii)] $(\tilde M_{r-t}^{r, j}, j\in \N^*)$ are independent,
  independent of $M_{-r}$ and each one represents the number of ancestor
  at time $-t$ generated by one of the ancestors at time $-r$. By
  construction, $(\tilde M_{r-t}^{r, j}, j\in \N^*)$ are distributed as
  $R_{r-t}^r$ under $\N[dY|\zeta>r]$.
\end{enumerate}
We get  for $\lambda>0$ and $\mu>0$:
\begin{align}
   \nonumber
\rE\left[\expp{-\lambda M_{-r} -\mu M_{-t}}\right]
&=\rE\left[\expp{-\lambda M_{-r}}\rE\left[\expp{-\mu M_{-t}}|M_{-r}\right]\right]\\
\label{expMrMt}
&=\rE\left[\expp{-\lambda M_{-r}} \N\left[\expp{-\mu R_{r-t}^r}|
    \zeta>r\right]^{M_{-r}}\right]\E\left[\expp{-\mu M^{I[-r,-t]}}\right],
\end{align}
Using \reff{eq:nexpR}, we obtain:
\[
 \N\left[\expp{-\mu R_{r-t}^r}| \zeta>r\right]=
\frac{\N\left[\zeta>r\right] - \N\left[1- \expp{- \mu
        R_{r-t}^r}\right]}{\N\left[\zeta>r\right]}=
\frac{c(r)-u((1-\expp{-\mu})c(t), r-t)}{c(r)}
\]
and:
\[
\rE\left[\expp{-\mu M^{I[-r,-t]}}\right]
=\expp{-2\beta \int_0^{r-t} ds \, \N\left[1-\expp{-\mu R_{s}^{t+s}
      }\right]}=\left(1+(1-\expp{-\mu})(1-\expp{-2\beta \theta
    (r-t)})\frac{c(t)}{2\theta}\right)^{-2}.
\]
Plugging the above computations in \reff{expMrMt}, and
 using \reff{eq:lapm}, we get:
\[
\rE\left[\expp{-\lambda M_{-r} -\mu M_{-t}}\right]
=\left[1+\frac{c(t)}{2\theta}(1-\expp{-\mu})
+\frac{c(r)}{2\theta}(1-\expp{-\lambda})\expp{-\mu}\right]^{-2}.
\]

This and \reff{eq:LapMrMt}
imply that $(M_{-t}, M_{-r})$ and $(M_0^t, M_0^r)$ have the same
distribution.
\end{proof}

We now give the main Theorem of this Section on the time reversal of the
number of ancestors process.

\begin{theo}
   \label{theo:time-reversal}
The process  $(M_{s-r}^{s}, s\in \R, r>0)$ is distributed as
$(M_{s}^{s+r}, s\in \R, r>0)$.
\end{theo}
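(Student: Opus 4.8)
The plan is to realize the time reversal by a measure-preserving transformation of the Poisson point measure $\cn'(dt,dH)=\sum_{i\in I}\delta_{(t_i,H^i)}(dt,dH)$ (intensity $2\beta\,dt\,\N(dH)$) coding the genealogy of $Z$, for which $M_r^t=\sum_{i\in I}\ind_{\{t_i<r\}}R_{r-t_i}^{t-t_i}(H^i)$ by \reff{def:mrt}. For an excursion $H$ write $m(H)=\max(H)$, which is the lifetime of $Y=\ell^{\cdot}(H)$, and let $H\mapsto\hat H$ denote the reversal of the genealogical tree of $Y$ about its lifetime — the refinement, at the level of the whole tree, of the ``time reversibility of $Y$ with respect to $\zeta$'' already used for Lemma \ref{lem:MZ}. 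The property I would use is that $\hat H$ has the same law as $H$ under $\N$, that $m(\hat H)=m(H)$, and that the ancestor counts get their levels reversed: $R_a^b(\hat H)=R_{m(H)-b}^{m(H)-a}(H)$ for $0<a<b<m(H)$. Setting $\Phi(t,H)=\bigl(-t-m(H),\hat H\bigr)$, the first step is to check that $\Phi(\cn')\stackrel{\text{(d)}}{=}\cn'$: since $\Phi(\cn')$ is again Poisson it is enough that $\Phi$ leaves $2\beta\,dt\,\N(dH)$ invariant, and this follows by Fubini because for each fixed $H$ the map $t\mapsto-t-m(H)$ preserves Lebesgue measure while $H\mapsto\hat H$ preserves $\N$.

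Next I would compute the effect of $\Phi$ on the ancestor field. Inserting $\Phi(\cn')$ into \reff{def:mrt} and using $R_{r+t_i+m_i}^{t+t_i+m_i}(\hat H^i)=R_{-t-t_i}^{-r-t_i}(H^i)$ (with $m_i=m(H^i)$), one finds for $r<t$
\[
M_r^t\bigl(\Phi(\cn')\bigr)=\sum_{i\in I}\ind_{\{t_i<-t\}}R_{-t-t_i}^{-r-t_i}(H^i)=M_{-t}^{-r}(\cn'),
\]
the reduction of the indicator being automatic: the $i$-th term vanishes when $t_i\ge-t$ (then $t+t_i+m_i\ge\max(\hat H^i)$) and, when $t_i<-t$, the constraint $-t_i-m_i<r$ is enforced by $R_{-t-t_i}^{-r-t_i}(H^i)$ being zero as soon as $H^i$ fails to reach level $-r-t_i$. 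Together with the first step this shows $(M_r^t,\,r<t)\stackrel{\text{(d)}}{=}(M_{-t}^{-r},\,r<t)$; writing $F(a,b)=M_a^b$ on $\{a<b\}$, the field $F$ is invariant in law under the reflection $(a,b)\mapsto(-b,-a)$.

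To pass from this reflection invariance to the Theorem I would argue as follows. Both $(M_{s-r}^s)_{s\in\R,r>0}$ and $(M_s^{s+r})_{s\in\R,r>0}$ are determined by the values of $F$ on a countable set and enjoy enough regularity (cf. Propositions \ref{prop:M-mart}--\ref{prop:M-back}), so it suffices to match finite-dimensional laws. Via the branching property, both are Markov in the ``present-time'' parameter $s$; the relevant one-step moves are the thinning \reff{eq:repMr} in the population variable and the branching decomposition \reff{eq:decomm} in the ancestral variable. By stationarity together with Lemma \ref{lem:MZ} the two processes have the same one-($s$-)dimensional marginal, namely the law of $(M_{-r})_{r>0}$; feeding the reflection invariance of the previous step into \reff{eq:repMr} and \reff{eq:decomm} should identify the two transition mechanisms, a matching that can be checked by a joint Laplace-transform computation of the type \reff{eq:LapMrMt}--\reff{expMrMt}, now carrying two time origins. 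Concluding then gives equality of all finite-dimensional laws.

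The main obstacle is twofold. First, making rigorous the ``tree-level'' reversal $H\mapsto\hat H$: one must produce it with the exact identity $R_a^b(\hat H)=R_{m(H)-b}^{m(H)-a}(H)$ and with $\hat{\N}=\N$, a strengthening of the reversibility quoted for Lemma \ref{lem:MZ} which should follow from the reversibility of the quadratic continuous-state branching process $Y$ about its lifetime (equivalently, reversibility of the reduced trees between two levels). Second, the genuinely two-dimensional bookkeeping in the last step: the transformation $\Phi$ only yields a reflection symmetry of $F$, whereas the Theorem is an ``$r$-dependent shift'' symmetry, so one really has to exploit stationarity and the Markov structure in $s$ to close the argument. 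A fully computational alternative is to skip the transformation and compute directly the joint Laplace transforms of $(M_{s_k-r_k}^{s_k})_k$ and $(M_{s_k}^{s_k+r_k})_k$ by iterated use of \reff{eq:repMr} and \reff{eq:decomm}, exactly in the spirit of the proof of Lemma \ref{lem:MZ}.
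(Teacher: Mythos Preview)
Your final paragraph identifies the crux correctly: once you reduce to ``both $\cm^{(\text{b})}$ and $\cm^{(\text{f})}$ are stationary Markov in $s$, share the one-$s$-marginal by Lemma~\ref{lem:MZ}, and have the same transition kernel'', you are done --- and that is exactly the paper's proof. The paper does not attempt any pathwise tree reversal; it proves the transition-kernel identity by an explicit conditional Laplace-transform computation (Lemma~\ref{lem:mf=mb}), which is the substantial work you defer with ``can be checked by a joint Laplace-transform computation''. Your ``fully computational alternative'' is also the paper's route, except that the paper uses the Markov structure to reduce to two time origins and a two-dimensional conditional law, rather than attacking general $k$-tuples directly.

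The genuinely different ingredient in your proposal is the transformation $\Phi(t,H)=(-t-m(H),\hat H)$. Two issues remain. First, the existence of a measurable $H\mapsto\hat H$ with $\N(\hat H\in\cdot)=\N(H\in\cdot)$ and the exact identity $R_a^b(\hat H)=R_{m(H)-b}^{\,m(H)-a}(H)$ is not the ``time reversibility of $Y$ with respect to $\zeta$'' quoted before Lemma~\ref{lem:MZ}; it is a height-reversibility of the full genealogical tree (equivalently, re-rooting at the highest leaf preserves the law under $\N$), which you would have to prove separately. Second --- and this is the more structural gap --- even granting the reflection, you only obtain $(M_{s-r}^s)_{s,r}\stackrel{\text{(d)}}{=}(M_{-s}^{-s+r})_{s,r}$, i.e.\ $\cm^{(\text{b})}$ time-reversed equals $\cm^{(\text{f})}$ in law. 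To pass from this to $\cm^{(\text{b})}\stackrel{\text{(d)}}{=}\cm^{(\text{f})}$ you still need that one of them is reversible in $s$, and establishing that is essentially equivalent to the transition-kernel identity in Lemma~\ref{lem:mf=mb}. So the reflection does not shortcut the computation; it repackages the remaining work without reducing it.
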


\begin{rem}
   \label{rem:tr-totl}
Using stationarity, we deduce that the truncated total  length  process
of the genealogical
tree, see \reff{defL}:
\[
\left(L_\varepsilon^s= \int_\varepsilon^\infty M_{s-r}^s\, dr, s\in \R,
  \varepsilon>0 \right)
\]
is distributed as the (backward) truncated total lifetime process of the population:
\[
\left(\int_\varepsilon^\infty M_{-s}^{-s+r}\, dr, s\in \R, \varepsilon>0
\right).
\]
In particular, we deduce from Remark \ref{rem:AldPop}  the following
distribution equality:
\begin{equation}
   \label{eq:=timer}
(L_\varepsilon^0, \varepsilon>0) \stackrel{\text{(d)}}{=}
\left(   \sum_{x_j<Z_0} ( \zeta_j -\varepsilon)_+,
  \varepsilon>0\right),
\end{equation}
where $x_+=\max(x,0)$ and $\sum_{j\in J} \delta_{x_j, \zeta_j}$
is  a Poisson point measure on $(0,+\infty )^2$
 with intensity  $dx |c'(t)| dt$  independent of
$Z_0$.
\end{rem}

Before giving the proof of Theorem \ref{theo:time-reversal} which is
postponed at the end of this Section, we first
give a preliminary Lemma.

We define the forward process for the individuals living at time $s$ (which relies on
their life-time) $\cm^{(\text{f})}_s=(M_s^{r+s}, r>0)$ and the backward
process for the ancestors of the population living at time $s$,
$\cm^{(\text{b})}_s=(M_{s-r}^s, r>0)$.

 \begin{lem}
\label{lem:mf=mb}
 We have for $t>r>0$, $s\geq 0$, $\lambda>0$ and $\mu>0$:
 \begin{align}
 \label{eq:MbLap-second}
\E\left[\expp{-\lambda M_{-s-r}^{-s} -\mu M_{-s-t}^{-s} }\big|
  \cm^{(\text{b})}_0\right]
& =  \ck_{\lambda,\mu} \left(\cm^{(\text{b})}_0\right),\\
\label{eq:MfLap-second}
 \E\left[\expp{-\lambda M^{s+r}_ s- \mu M^{s+t}_s}| \cm^{(\text{f})}_0\right]
& = \ck_{\lambda,\mu} \left(\cm^{(\text{f})}_0\right),
\end{align}
with $\ck_{\lambda,\mu} $ some measurable deterministic function
depending on $\lambda$ and
$\mu$.
 \end{lem}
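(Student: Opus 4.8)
The plan is to establish the two identities \eqref{eq:MbLap-second} and \eqref{eq:MfLap-second} separately, and then observe that the two resulting functions of $\cm^{(\text{b})}_0$ and $\cm^{(\text{f})}_0$ must coincide because they are forced by the same combinatorial/branching decomposition. For the backward process, I would first argue that, conditionally on $\cm^{(\text{b})}_0 = (M_{-r}^0, r>0)$, the pair $\bigl(M_{-s-r}^{-s}, M_{-s-t}^{-s}\bigr)$ depends on the conditioning only through $M_{-s}^0$, i.e.\ through the single number of ancestors at time $-s$ of the population alive at time $0$. This is a consequence of the branching property applied along the genealogical tree: given that there are $M_{-s}^0$ ancestors at time $-s$, the subtrees hanging below these ancestors (toward the past) are i.i.d.\ and independent of everything above level $-s$, and $M_{-s-r}^{-s}$, $M_{-s-t}^{-s}$ are functionals of those subtrees only. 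More precisely, each of the $M_{-s}^0$ ancestors at time $-s$ generates, backward in time, an independent copy of the ancestor structure, so that conditionally on $M_{-s}^0 = n$ one has a representation of $\bigl(M_{-s-r}^{-s}, M_{-s-t}^{-s}\bigr)$ as a sum of $n$ i.i.d.\ pairs plus an independent immigration contribution — exactly the kind of decomposition used in \eqref{eq:decomm}. Taking the joint Laplace transform then gives a deterministic function of $M_{-s}^0$, hence a measurable function $\ck_{\lambda,\mu}$ of $\cm^{(\text{b})}_0$.

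For the forward process the argument is the mirror image: conditionally on $\cm^{(\text{f})}_0 = (M_0^r, r>0)$, the pair $\bigl(M_s^{s+r}, M_s^{s+t}\bigr)$ depends on the conditioning only through $M_s^0$, the number of individuals alive at time $s$ that are ancestors of the population alive at time $0$ — wait, more carefully, one uses that $M_s^{s+r}$ counts ancestors at time $s$ of the population at time $s+r$, and conditionally on $Z_s$ (equivalently, on enough of $\cm^{(\text{f})}_0$ to recover the relevant information) these are governed by the branching mechanism in the same way. Here I would invoke the representation \eqref{eq:M=zeta}-style identity: conditionally on $Z_s$, $(M_s^{s+r}, r>0)$ is distributed as $\bigl(\sum_{x_j<Z_s}\ind_{\{\hat\zeta_j\ge r\}}, r>0\bigr)$ for an independent Poisson point measure with intensity $dx\,\N[dY]$, and the pair at levels $r<t$ is then a thinning of this, yielding again an explicit function of $Z_s$. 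The subtlety is that $\cm^{(\text{f})}_0$ does not directly contain $Z_s$; I would recover $Z_s$ (or the needed functional) as a measurable function of the path $\cm^{(\text{f})}_0$, using $\lim_{r\to 0+} M_s^{s+r}/c(r) = Z_s$ analogous to \eqref{eq:limM/c}, so that the conditional law of $\bigl(M_s^{s+r}, M_s^{s+t}\bigr)$ given $\cm^{(\text{f})}_0$ is indeed a deterministic function of $\cm^{(\text{f})}_0$ alone.

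Finally, I must check that the function obtained in the forward computation equals the one obtained in the backward computation, so that the \emph{same} symbol $\ck_{\lambda,\mu}$ works in both lines. The cleanest route is to compute both explicitly and compare, or — more in the spirit of this paper — to deduce it from Lemma \ref{lem:MZ} together with the Markov property: both $(M_{-r}^0, r>0)$ and $(M_0^r, r>0)$ are Markov, they have the same one-dimensional-path law by Lemma \ref{lem:MZ}, and the conditional law of $\bigl(M_{-s-r}^{-s}, M_{-s-t}^{-s}\bigr)$ given the whole past path reduces, by the branching/Markov structure, to a function of $M_{-s}^0$ of the same functional form as the conditional law of $\bigl(M_s^{s+r}, M_s^{s+t}\bigr)$ given $M_s^0$; since $M_{-s}^0$ and $M_s^0$ have the same law this forces the two $\ck_{\lambda,\mu}$ to agree. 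I expect the main obstacle to be the bookkeeping in the forward case: unlike the backward process, where \eqref{eq:decomm} is already available, here one must carefully justify the reduction ``conditioning on $\cm^{(\text{f})}_0$ = conditioning on $Z_s$'' for the increments between levels $s+r$ and $s+t$, i.e.\ show that the extra information in the path $\cm^{(\text{f})}_0$ beyond $Z_s$ is independent of the future increments $\bigl(M_s^{s+r}, M_s^{s+t}\bigr)$ — this is where the branching property must be invoked with some care, splitting the population at time $s$ according to whether a given infinitesimal individual has descendants surviving to time $0$.
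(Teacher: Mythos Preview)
Your proposal rests on a claim that is false: you assert that, conditionally on $\cm^{(\text{b})}_0$, the pair $(M_{-s-r}^{-s}, M_{-s-t}^{-s})$ depends on the conditioning only through the single number $M_{-s}^0$. This is not so. The quantity $M_{-s-r}^{-s}$ counts the ancestors at time $-s-r$ of the \emph{entire} population $Z_{-s}$ at time $-s$, not merely of the $M_{-s}^0$ individuals who happen to be ancestors of the time-$0$ population. Hence $M_{-s-r}^{-s}$ splits as $M_{-s-r}^{-s}=M_{-s-r}^0 + (\text{extra ancestors})$, where the first term is itself $\cm^{(\text{b})}_0$-measurable and the second counts ancestors of individuals alive at $-s$ that die out before time~$0$. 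By Proposition~\ref{prop:ZfromM}, those extra individuals are obtained by grafting CB excursions along the backbone with intensity proportional to $M_{-v}+1$ at each level $-v$, so the conditional law of the extras depends on the whole path $(M_{-v}^0,\,v\ge s+r)$. The paper's computation makes this explicit: the resulting function is
\[
\ck_{\lambda,\mu}(m)=\exp\Bigl(-(\lambda+\mu)\,m(s+t)-2\beta\int_{s+r}^{\infty}g(v)\bigl(m(v)+1\bigr)\,dv\Bigr)
\]
for a path $m=(m(v),v>0)$, with an explicit $g$; this is not a function of $m(s)$ alone.

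The forward side has the mirror problem, plus a measurability slip: you propose to recover $Z_s$ via $\lim_{r\to 0+} M_s^{s+r}/c(r)$, but $M_s^{s+r}$ is not part of $\cm^{(\text{f})}_0=(M_0^r,r>0)$, so $Z_s$ is simply not $\sigma(\cm^{(\text{f})}_0)$-measurable. What the paper does instead is use the Williams decomposition along each lineage present at time $0$: conditionally on $\cm^{(\text{f})}_0$ one knows the set of lifetimes $\{\zeta_i\}$, and along each such lineage (and along the immortal spine over $[0,s]$) one grafts independent CB excursions. Matching the resulting conditional Laplace transform to the backward formula requires a genuine computation --- one shows that a certain function $G$ satisfies $G'=g$ --- and this is where the work lies. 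It cannot be short-circuited by Lemma~\ref{lem:MZ}: that lemma only gives equality in law of the two conditioning processes $\cm^{(\text{b})}_0$ and $\cm^{(\text{f})}_0$, which says nothing about equality of the two \emph{transition kernels}; establishing that equality is precisely the content of the present lemma, and is what is then fed into the proof of Theorem~\ref{theo:time-reversal}.
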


\begin{proof}
  We    first    prove    \reff{eq:MbLap-second}.   Using    Proposition
  \ref{prop:ZfromM},     the     distribution    of     $(M_{-s-r}^{-s},
  M_{-s-t}^{-s})$,  conditionally  on  $\cm^{(\text{b})}_0$,  consists  of
  three parts:
\begin{itemize}
   \item[(i)] The ancestors at time $-s-t$ of the current population at
  $0$,  that is $M_{-s-t}$.
\item[(ii)] The  ancestors coming    from the  immortal
  individual  (or from  the  immigration) over   $v\in(-\infty, -s-t)$
  whose   intensity  is  $2\beta   dv(M_{v}+1)\N[dY; \zeta<-v]$.
\item[(iii)]  The  ancestors coming   from the  immortal
  individual  (or from  the  immigration)  over   $v\in(-s-t,  -s-r)$  whose   intensity  is  $2\beta
  du(M_{v}+1)\N[dY;  \zeta<-v]$.  Notice that  in  this  case, we  have
  $M_{-s-t}^{-s}=0$.
\end{itemize}
This implies:
\begin{multline*}
\E\left[\expp{-\lambda M_{-s-r}^{-s} -\mu M_{-s-t}^{-s} }\big|
  \cm^{(\text{b})}_0\right]\\
=\expp{-(\mu+\lambda)M_{-s-t}}\exp\left(-2\beta
  \int^{s+t}_{s+r}dv\; (M_{-v}+1) \N\left[\left(1-\expp{-\lambda
        R_{v-s-r}^{v-s}}\right)\ind_{\{\zeta<v\}}\right]\right)\\
\exp\left(-2\beta \int^{\infty}_{s+t}dv\; (M_{-v}+1)
  \N\left[\left(1-\expp{-\lambda R_{v-s-r}^{v-s}-\mu
        R_{v-s-t}^{v-s}}\right)\ind_{\{\zeta<v\}}\right]\right).
\end{multline*}
Lemma \ref{lem:Rab} implies
\begin{equation}\label{eq:La_MZeta}
\N\left[\left(1-\expp{-\lambda R^{v-s}_{v-s-r}}\right)\ind_{\{\zeta<v\}}\right]
=u(\delta_1, v-s-r)-c(v),
\end{equation}
 \begin{equation}\label{thirdterm}
 \N\left[\left(1-\expp{-\lambda R_{v-s-r}^{v-s}-\mu
       R_{v-s-t}^{v-s}}\right)\ind_{\{\zeta<v\}}\right]=u(\delta_3,
 v-s-t)-c(v),
\end{equation}
with
\begin{equation}
   \label{eq:def-l}
\delta_1=\left(1-\expp{-\lambda} \right)c(r)+\expp{-\lambda}c(r+s)
\quad\text{and}\quad
\delta_3=(1-\expp{-\mu})c(t)+\expp{-\mu} u(\delta_1, t-r).
\end{equation}
Thus, we deduce \reff{eq:MbLap-second} with:
\begin{equation}
   \label{eq:df-K}
\ck_{\lambda,\mu} \left(\cm^{(\text{b})}_0\right) = \expp{-(\lambda+\mu)
  M_{-s-t} -2\beta \int_{s+r}^{+\infty }  g(v)(M_{-v}+1)\, dv},
\end{equation}
with $g$ defined by:
\begin{equation}
   \label{eq:def-g}
g(v)= \ind_{(s+r,s+t)}(v) u(\delta_1, v-s-r)+\ind_{(s+t, +\infty )}(v)
u(\delta_3, v-s-t)  -c(v) .
\end{equation}

On the  other hand, using  the Williams' decomposition given  in Abraham
and  Delmas  \cite{ad:wdlcrtseppnm},  the distribution  of  $(M^{s+r}_s,
M^{s+t}_s)$,  conditionally  on  $\cm^{(\text{f})}_0$,  consists  of  three
parts:
\begin{itemize}
   \item[(i)] The individuals of  the current populations which have descendants
at  time $s+t$,  that is,  $M_0^{s+t}$.
\item[(ii)] The part coming from the immigration over the
time interval  $[0,s]$ which arrives with  rate
$2\beta dv\N[dY] $.
\item[(iii)] The individuals $i\in I$, for some index set $I$, living at
  time  0  with   life-time  $\zeta_i>s+r$  generate  individuals,  with
  intensity $2\beta dv \N[dY; \zeta<\zeta_i -v]$, over the time interval
  $[0,s]$ and some of them are still alive at time $s+r$ and $s+t$.
 \end{itemize}
Notice the set $\{\zeta_i, i\in I\}=\{t; M_0^t =M_0^{t-}+1\}$ is
measurable  with respect to the $\sigma$-field generated by
$\cm^{\text{(f)}}_0$. Therefore, we have:
\begin{multline*}
\E\left[\expp{-\lambda M^{s+r}_ s- \mu M^{s+t}_s}|
  \cm^{(\text{f})}_0\right]\\
=\expp{-(\mu+\lambda) M_{0}^{s+t}}
 \exp\left(-2\beta \int_{0}^{s}dv\;
  \N\left[\left(1-\expp{-\lambda R_{s-v}^{s+r-v}-\mu
        R_{s-v}^{s+t-v}}\right)\right]\right) \\
 \exp\left(-2\beta \sum_{\zeta_i>s+r}\int_{0}^{s}dv\;
  \N\left[\left(1-\expp{-\lambda R_{s-v}^{s+r-v}-\mu
        R_{s-v}^{s+t-v}}\right)\ind_{\{\zeta<\zeta_i-v\}}\right]\right).
\end{multline*}
We set for $q\geq s+r$:
\[
G(q)= \int_{0}^{s}dv\;  \N\left[\left(1-\expp{-\lambda R_{s-v}^{s+r-v}-\mu
        R_{s-v}^{s+t-v}}\right)\ind_{\{\zeta<q-v\}}\right].
\]
Notice that $G(s+r)=0$.
With this notation we can write:
\begin{equation}
   \label{eq:fM}
\E\left[\expp{-\lambda M^{s+r}_ s- \mu M^{s+t}_s}|
  \cm^{(\text{f})}_0\right]
=\expp{-(\mu+\lambda) M_{0}^{s+t} -2\beta G(+\infty ) - 2\beta \sum_{\zeta_i>s+r} G(\zeta_i)}.
\end{equation}

First, we compute the derivative of $G$ on $(s+r,s+t)$. Thanks to Lemma
\ref{lem:Rab}, see \reff{eq:nrabz1}, we have
for $q\in [s+r,s+t]$:
\[
G(q)
= \int_{0}^{s}dv\;  \N\left[\left(1-\expp{-\lambda
      R_{s-v}^{s+r-v}}\right)\ind_{\{\zeta<q-v\}}\right]
= \int_{0}^{s}dv\; \Big(
 u\left(\gamma_1(q) ,s-v\right)-c(q-v)\Big)
\]
with
\[
\gamma_1(q)=(1-\expp{-\lambda})c(r)+\expp{-\lambda}c(q-s).
\]
Notice that for $q\in (s+r,s+t)$:
\begin{align*}
   \partial_q \int_{0}^{s}dv\;
 u\left(\gamma_1(q) ,s-v\right)
&   =\partial_q \gamma_1(q) \int_{0}^{s}dv\;
\partial_\lambda
u\left(\gamma_1(q) ,s-v\right) \\
&   = - \partial_q \gamma_1(q) \int_{0}^{s}dv\;
\frac{\partial_t
  u\left(\gamma_1(q),s-v\right)}
{\psi(\gamma_1(q))} \\
&   = \partial_q \gamma_1(q)
\frac{\gamma_1(q) - u(\gamma_1(q),s)}
{\psi(\gamma_1(q))}, \\
\end{align*}
where we used \reff{eq:backe} for the second equality.
We also have:
\[
 - \partial_q \int_{0}^{s}dv\; c(q-v)=c(q-s) -c(q).
\]
Recall $\delta_1$ defined in \reff{eq:def-l}.
Elementary  computations yield for $q\in (s+r,s+t)$:
\begin{align*}
c(q)+\partial_q G(q)
&= \partial_q \gamma_1(q)
\frac{\gamma_1(q) - u(\gamma_1(q),s)}
{\psi(\gamma_1(q))} + c(q-s)\\
&=2\theta \frac{(\expp{2\beta \theta(r+s)} - 1) - \expp{-\lambda} \expp{2\beta \theta
    r}( \expp{2\beta \theta s} - 1 )}
{(\expp{2\beta \theta(r+s)} - 1)(\expp{2\beta \theta(q-s)} - 1) -
  \expp{-\lambda} (\expp{2\beta \theta s} - 1)(\expp{2\beta \theta(q-s)}
  - \expp{2\beta \theta r})}\\
   &= u(\delta_1, q-s-r).
\end{align*}
Thanks to \reff{eq:def-g}, we deduce that for $q\in (s+r,s+t)$,
$\partial_q G(q)=g(q)$.

Second, we compute the derivative of $G$ on $(s+t,+\infty )$. Thanks to
Lemma \ref{lem:Rab}, see \reff{eq:nrabz1.5}, we have for $q>s+t$:
\[
G(q)
= \int_{0}^{s}dv\;   \Big(
 u\left(\gamma_2(q) ,s-v\right)-c(q-v)\Big),
\]
with
\[
\gamma_2(q)=(1-\expp{-\lambda})c(r)+\expp{-\lambda}(1-\expp{-\mu}) c(t)
+ \expp{-(\lambda+\mu)} c(q-s).
\]
Similar arguments as in the first part give  for $q>s+t$:
\[
c(q)+\partial_q G(q)
= \partial_q \gamma_2(q)
\frac{\gamma_2(q) - u(\gamma_2(q),s)}
{\psi(\gamma_2(q))} + c(q-s).
\]
Recall $\delta_3$ defined in \reff{eq:def-l}. Elementary (but
tedious) computations yield  for $q>s+t$:
\[
c(q)+\partial_q G(q)
 = u(\delta_3, q-s-t)
\]
so that for $q>s+t$,
$\partial_q G(q)=g(q)$.

For all $0<v<s$, $t>r>0$, we have $\N$-a.e. $\lim_{q\downarrow
  s+t} R^{s+t-v}_{s-v} \ind_{\{\zeta<q-v\}} =0$. This implies  that $G$ is
continuous at $s+t$. Since $G(s+r)=0$, we deduce that for all $q\geq
s+r$,
\[
G(q)=\int_{s+r}^q g(v)\, dv.
\]
In particular, we get $G(+\infty )= \int_{s+r}^{+\infty } g(v)\,dv$ as
well as:
\[
\sum_{i; \zeta_i>s+r} G(\zeta_i)
=\int_{s+r}^{+\infty } dv\, g(v) \sum_{i; \zeta_i>s+r} \ind_{\{v\leq \zeta_i\}}
= \int_{s+r}^{+\infty } dv\, g(v) M_0^v.
\]
This, \reff{eq:fM} and \reff{eq:df-K} imply:
\[
\E\left[\expp{-\lambda M^{s+r}_ s- \mu M^{s+t}_s}|
  \cm^{(\text{f})}_0\right]
=\expp{-(\mu+\lambda) M_{0}^{s+t} -2\beta  \int_{s+r}^{+\infty }
dv\, g(v) \left( M_0^v+1\right)}
=\ck_{\lambda,\mu} \left(\cm^{(\text{f})}_0\right) .
\]
This ends the proof of the Lemma.
\end{proof}

\begin{proof}[Proof of Theorem \ref{theo:time-reversal}]
Notice the (stationary) processes $\cm^{(\text{b})}=(\cm^{(\text{b})}_s, s\in \R)$
and $\cm^{(\text{f})}=(\cm^{(\text{f})}_s, s\in \R)$ are Markov.
Since the process  $M_{-s-\bullet}^{-s}$ (resp. $M^{s+
  \bullet}_s$) conditionally on
$\cm^{(\text{b})}_0$ (resp. $\cm^{(\text{f})}_0$)  is Markov, we deduce
from Lemma \ref{lem:mf=mb} that the transition kernel of
$\cm^{(\text{b})}$ and $\cm^{(\text{f})}$ are equal. Then use
Lemma \ref{lem:MZ} to conclude.
\end{proof}

\section{The total length process}\label{sec3:tlp}

\subsection{Total length process}

We define the total length of the genealogical tree for the population
living at time $s$, up to time $s-\varepsilon$ (with $\varepsilon>0$) by:
\begin{equation}
\label{defL}
L_\varepsilon^s= \int_\varepsilon^\infty M_{s-r}^s\, dr.
\end{equation}
In order to study the asymptotic of $L_\varepsilon^s$ as $\varepsilon$ goes to $0$, we consider the normalized total length
\begin{equation}
\label{eq:defcl}
\cl_\varepsilon^s=L_\varepsilon^s-Z_s\int_\varepsilon^\infty c(r)\, dr
= L_\varepsilon^s+\frac{Z_s}{\beta} \log (1-\expp{-2\beta\theta
  \varepsilon}),
\end{equation}
where we used \reff{eq:int-c} for the last equality.
When $s=0$, we write $L_\varepsilon$ (resp. $\cl_\varepsilon$) for
$L_\varepsilon^s$
(resp. $\cl^s_\varepsilon$). By stationarity, the distributions of
$(L_\varepsilon^s, \varepsilon>0)$ and $(\cl_\varepsilon^s, \varepsilon>0)$ do not depend on $s$.
We have:
\begin{equation}
   \label{eq:EL}
\rE[L_\varepsilon] = \inv{\theta} \int _\varepsilon^\infty  c(r)\; dr= -\frac{1}{\beta
  \theta} \log (1-\expp{-2\beta\theta \varepsilon})
\quad\text{and}\quad
\rE[\cl_\varepsilon] = 0.
\end{equation}

In order to study the convergence of $\cl_\varepsilon$, we first give an elementary Lemma.
Recall the  dilogarithm function is defined for $0\leq t\leq 1$ by:
\[
\Li_2(t)=- \int_0^t \frac{\log(1-x)}{x} \, dx
\]
and we have: $\Li_2(0)=0$,  $\Li_2(1)=\pi^2/6$.

\begin{lem}
\label{lem:ntlp2}
For $\eta>\varepsilon>0$, we have:
\begin{equation}
   \label{eq:Les}
\rE\left[\left(\cl_\eta-\cl_\varepsilon\right)^2\right]=
\frac{1}{\beta^2\theta^2}\left[\Li_2(1-\expp{-2\beta\theta
    \eta})-\Li_2(1-\expp{-2\beta\theta\varepsilon})\right]
    +\frac{2\varepsilon}{\beta\theta}\log\left(
    \frac{1-\expp{-2\beta\theta\varepsilon}}{1-\expp{-2\beta\theta
    \eta}}\right).
\end{equation}
In particular, we have $\lim_{\varepsilon\rightarrow 0}
\rE\left[(\cl_\varepsilon)^2\right]=\frac{1}{\beta^2\theta^2}\frac{\pi^2}{6}\cdot$
\end{lem}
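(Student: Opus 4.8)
The plan is to compute $\rE[(\cl_\eta-\cl_\varepsilon)^2]$ by using the Poissonian description of the time-reversed genealogy to reduce it to a single one-dimensional integral against $c$. First, by \reff{eq:defcl} and \reff{defL} one has
\[
\cl_\eta-\cl_\varepsilon = -\int_\varepsilon^\eta \big(M_{-r}-Z_0\, c(r)\big)\, dr .
\]
Setting $\phi(z)=\int_\varepsilon^\eta \ind_{\{z\ge r\}}\, dr=(\min(z,\eta)-\varepsilon)_+$, the distributional identity \reff{eq:=timer} (equivalently, Remark~\ref{rem:AldPop} together with Lemma~\ref{lem:MZ}, which give the joint law of $Z_0$ and $(M_{-r},r>0)$) yields
\[
\cl_\eta-\cl_\varepsilon \stackrel{\text{(d)}}{=} -\Big(\sum_{x_j<Z_0}\phi(\zeta_j)-Z_0\int_\varepsilon^\eta c(r)\, dr\Big),
\]
where $\sum_j\delta_{x_j,\zeta_j}$ is a Poisson point measure on $(0,+\infty)^2$ with intensity $dx\,|c'(t)|\, dt$ independent of $Z_0$ (the ``law of $\zeta$ under $\N$'' corresponding to $|c'(t)|\, dt$ since $c(t)=\N[\zeta\ge t]$).

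Second, I would observe that $\int_\varepsilon^\eta c(r)\, dr=\int_0^\infty \phi(t)\,|c'(t)|\, dt$, so that, conditionally on $Z_0$, the bracketed quantity is a centered Poisson integral: its conditional mean is $0$ and its conditional variance is $Z_0\int_0^\infty \phi(t)^2\,|c'(t)|\, dt$ by Campbell's formula (the functions $\phi$ and $\phi^2$ being $|c'(t)|\, dt$-integrable because $\phi$ vanishes on $[0,\varepsilon]$, is bounded, and $c$ is exponentially small at infinity). Taking expectations via the tower property, all cross terms disappear, and using $\rE[Z_0]=1/\theta$ from \reff{eq:momentz},
\[
\rE[(\cl_\eta-\cl_\varepsilon)^2]=\frac{1}{\theta}\int_0^\infty \phi(t)^2\,|c'(t)|\, dt .
\]
Writing $\phi(t)^2=\int_\varepsilon^\eta\int_\varepsilon^\eta \ind_{\{t\ge r\vee s\}}\, dr\, ds$ and applying Tonelli with $\int_u^\infty |c'(t)|\, dt=c(u)$ turns the right-hand side into $\frac{1}{\theta}\int_\varepsilon^\eta\int_\varepsilon^\eta c(r\vee s)\, dr\, ds=\frac{2}{\theta}\int_\varepsilon^\eta (s-\varepsilon)\,c(s)\, ds$.

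Third, I would evaluate $\int_\varepsilon^\eta (s-\varepsilon)\,c(s)\, ds$ explicitly. Integration by parts with the primitive $\frac{1}{\beta}\log(1-\expp{-2\beta\theta s})$ of $c$ (cf.\ \reff{eq:intc}) reduces this to $\int_\varepsilon^\eta \log(1-\expp{-2\beta\theta s})\, ds$, and the change of variable $u=\expp{-2\beta\theta s}$ brings the latter to $\frac{1}{2\beta\theta}\int \frac{\log(1-u)}{u}\, du$, i.e.\ to a difference of values of $-\Li_2$ at $\expp{-2\beta\theta\varepsilon}$ and $\expp{-2\beta\theta\eta}$. Euler's reflection formula $\Li_2(x)+\Li_2(1-x)=\pi^2/6-\log x\,\log(1-x)$ then converts these into the $\Li_2(1-\expp{-2\beta\theta\cdot})$ terms of the statement, the constants $\pi^2/6$ cancelling in the difference; collecting everything yields \reff{eq:Les}. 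For the final assertion, running the same computation with $\eta=+\infty$ (legitimate since $c$ and $|c'|$ decay exponentially) gives $\rE[(\cl_\varepsilon)^2]=\frac{2}{\theta}\int_\varepsilon^\infty (s-\varepsilon)\,c(s)\, ds$, which is \reff{eq:Les} with $\Li_2(1-\expp{-2\beta\theta\eta})$ replaced by $\Li_2(1)=\pi^2/6$ and the log term replaced accordingly; letting $\varepsilon\to 0$ and using $\Li_2(0)=0$ together with $\varepsilon\log(1-\expp{-2\beta\theta\varepsilon})\to 0$ gives the value $\frac{1}{\beta^2\theta^2}\frac{\pi^2}{6}$. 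I expect the only fussy points to be the bookkeeping with the dilogarithm identity and checking the integrability needed to apply Campbell's formula and Tonelli; there is no genuine conceptual obstacle.
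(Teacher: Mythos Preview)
Your proof is correct, and it reaches the key intermediate identity
\[
\rE\left[(\cl_\eta-\cl_\varepsilon)^2\right]=\frac{2}{\theta}\int_\varepsilon^\eta (r-\varepsilon)\,c(r)\,dr
\]
by a genuinely different route from the paper. The paper expands the square as a double integral and computes the covariance $\rE[(M_{-r}-c(r)Z_0)(M_{-t}-c(t)Z_0)]=c(r)/\theta$ for $r>t$ by evaluating four separate expectations term by term, using \reff{eq:MrMt}, \reff{eq:momentz} and Lemma~\ref{lem:MZ}. You instead pull back along the Poisson representation of $(Z_0,(M_{-r})_{r>0})$ furnished by Lemma~\ref{lem:MZ} and \reff{eq:M=zeta}, and then apply Campbell's formula for the conditional variance given $Z_0$; this packages all four moment computations into a single identity and makes the structural reason for the answer transparent. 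The paper's approach is more self-contained (it only needs the moment formulas already established), while yours is shorter and conceptually cleaner once the time-reversal representation is in hand.

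For the evaluation of $\int_\varepsilon^\eta(r-\varepsilon)c(r)\,dr$, the paper substitutes $y=\expp{-2\beta\theta r}$ directly and recognises $\frac{\log y}{1-y}$ as the derivative of $\Li_2(1-y)$, arriving at \reff{eq:Les} without Euler's reflection. Your integration-by-parts followed by the substitution $u=\expp{-2\beta\theta s}$ produces $\Li_2(\expp{-2\beta\theta\cdot})$ terms, which you then convert via the reflection identity; this is equivalent but slightly longer, and you should make sure to track the boundary term $(\eta-\varepsilon)\beta^{-1}\log(1-\expp{-2\beta\theta\eta})$ from the integration by parts explicitly, since it combines with the logarithmic pieces coming out of the reflection formula to give the final $\frac{2\varepsilon}{\beta\theta}\log(\cdot)$ term.
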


\begin{proof}
Notice that:
\[
\rE\left[\left(\cl_\eta-\cl_\varepsilon\right)^2\right]=
\rE\left[\left(\int_\varepsilon^\eta (M_{-r}- c(r) Z_0) \, dr
  \right)^2\right].
\]
   We have:
\[
  \rE\left[\left(\int_\varepsilon^\eta (M_{-r}- c(r) Z_0) \, dr
    \right)^2\right]
=  2\int_{[\varepsilon,\eta]^2} dtdr\ind_{\{t<r\}} \rE \left[(M_{-r}- c(r)
  Z_0 )(M_{-t}- c(t) Z_0 )\right].
\]
It is easy to derive that for $r>t>0$:
\begin{align*}
\rE[c(r)c(t) Z_0^2]
&=\frac{3}{2}\frac{c(r)c(t)}{\theta^2}, \\
  \rE \left[M_{-r}M_{-t}\right]
&= \frac{c(r)}{\theta}\left(1+
  \frac{3}{2}\frac{c(t)}{\theta}\right),\\
\rE \left[M_{-r}c(t) Z_0 \right]
&=
\rE \left[M_0^rc(t) Z_0 \right] =c(r)c(t)\rE\left[Z_0^2\right]=
\frac{3}{2}\frac{c(r)c(t)}{\theta^2}, \\
\rE \left[M_{-t}c(r) Z_0 \right]
&=
\frac{3}{2}\frac{c(r)c(t)}{\theta^2},
\end{align*}
where we used \reff{eq:momentz} for the first equality, \reff{eq:MrMt}
for the second, Lemma \ref{lem:MZ} for the third, and the fact that
conditionally on $Z_0$, $M_0^r$ is Poisson with parameter $c(t) Z_0$ for
the fourth.
We deduce  that for $r>t>0$:
\[
\rE \left[(M_{-r}- c(r)
  Z_0 )(M_{-t}- c(t) Z_0 )\right]= \frac{c(r)}{\theta}
\]
and thus:
\begin{multline*}
\rE\left[\left(\int_\varepsilon^\eta (M_{-r}- c(r) Z_0 )\, dr
    \right)^2\right]\\
\begin{aligned}
&=\frac{2}{\theta} \int_\varepsilon^\eta (r-\varepsilon)c(r)\, dr\\
&=-\inv{\beta^2 \theta^2}  \int_{\expp{-2\beta\theta
    \eta}}^{\expp{-2\beta\theta\varepsilon}} \frac{\log(y)}{1-y}\, dy
+\frac{2\varepsilon}{\beta\theta}\int_{\expp{-2\beta\theta
    \eta}}^{\expp{-2\beta\theta\varepsilon}}\frac{1}{y-1}dy\\
    &=\frac{1}{\beta^2\theta^2}\left[\Li_2(1-\expp{-2\beta\theta
    \eta})-\Li_2(1-\expp{-2\beta\theta\varepsilon})\right]
    +\frac{2\varepsilon}{\beta\theta}\log\left(
    \frac{1-\expp{-2\beta\theta\varepsilon}}{1-\expp{-2\beta\theta
    \eta}}\right).
\end{aligned}
\end{multline*}
The second assertion is immediate.
\end{proof}

We have the a.s. and the $L^2$ convergence of $(\cl_\varepsilon^s,
\varepsilon>0)$ as $\varepsilon$ goes down to $0$.

\begin{theo}\label{cl2conver}
Let $s\in \R$. The compensated tree length  $(\cl_\varepsilon^s,
\varepsilon>0)$ converges a.s. and in $L^2$:
there exists a random variable $W_s\in L^2$ such that
\[
\cl_\varepsilon^s \xrightarrow[\varepsilon\to 0]{\text{$L^2$ and a.s.}} W_s.
\]
We have:
\[
\rE[W_s]=0 \quad\text{and}\quad
\rE[W_s^2]=\frac{1}{\beta^2\theta^2}\frac{\pi^2}{6}\cdot
\]
\end{theo}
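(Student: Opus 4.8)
The plan is to establish the $L^2$ convergence first by showing that $(\cl_\varepsilon^s)$ is Cauchy in $L^2$ as $\varepsilon\to 0$, and then to upgrade this to almost sure convergence using a martingale argument. By stationarity we may and do take $s=0$.

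For the $L^2$ part, I would use Lemma \ref{lem:ntlp2}. From the explicit formula \reff{eq:Les} for $\rE[(\cl_\eta-\cl_\varepsilon)^2]$, both terms on the right-hand side tend to $0$ as $\eta,\varepsilon\to 0$: indeed $\Li_2(1-\expp{-2\beta\theta\varepsilon})\to \Li_2(0)=0$ since $\Li_2$ is continuous at $0$, and $\varepsilon\log\big((1-\expp{-2\beta\theta\varepsilon})/(1-\expp{-2\beta\theta\eta})\big)\to 0$ because $\varepsilon\log(1/\varepsilon)\to 0$ and the ratio behaves like $\varepsilon/\eta$ near $0$. Hence $(\cl_\varepsilon)_{\varepsilon>0}$ is Cauchy in $L^2$ along any sequence $\varepsilon_n\downarrow 0$, and therefore converges in $L^2$ to some limit $W_0\in L^2$. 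The identities $\rE[W_0]=0$ and $\rE[W_0^2]=\pi^2/(6\beta^2\theta^2)$ follow from $\rE[\cl_\varepsilon]=0$ (see \reff{eq:EL}), from the $L^2$-continuity of the expectation and the second moment, and from the last assertion of Lemma \ref{lem:ntlp2}.

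For the almost sure convergence, the key observation is that $(\cl_{1/n})_{n\geq 1}$, or more precisely a suitable reversed filtration version, is a martingale. Concretely, set $\cl_\varepsilon = \int_\varepsilon^\infty (M_{-r}-c(r)Z_0)\,dr$ and note that for $\eta>\varepsilon$ the increment $\cl_\varepsilon-\cl_\eta = \int_\varepsilon^\eta (M_{-r}-c(r)Z_0)\,dr$. Using the third and fourth moment computations in the proof of Lemma \ref{lem:ntlp2} (which rest on the fact that $M_{-r}$ is, conditionally on $Z_0$, Poisson with mean $c(r)Z_0$, together with Lemma \ref{lem:MZ}), one checks that $\rE[\cl_\varepsilon-\cl_\eta \mid \mathcal{F}_\eta]=0$ where $\mathcal{F}_\eta=\sigma(Z_0,(M_{-r}, r\geq \eta))$ is the decreasing (as $\eta\downarrow 0$) filtration generated by the ancestor process truncated away from $0$; equivalently $(\cl_{1/n}, \mathcal{G}_n)$ with $\mathcal{G}_n=\mathcal{F}_{1/n}$ is a martingale indexed by $n$. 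Since it is bounded in $L^2$ (by Lemma \ref{lem:ntlp2}), the martingale convergence theorem gives almost sure convergence along $\varepsilon=1/n$ to $W_0$. To pass from the sequence $1/n$ to the full limit $\varepsilon\to 0$, I would use monotonicity-type control on the oscillation of $\varepsilon\mapsto \cl_\varepsilon$ on each interval $[1/(n+1),1/n]$: writing $\cl_\varepsilon-\cl_{1/n}=\int_\varepsilon^{1/n}(M_{-r}-c(r)Z_0)\,dr$, the integrand is integrable near $0$ (indeed $\rE\int_0^\delta M_{-r}\,dr<\infty$ and $\int_0^\delta c(r)\,dr<\infty$ both hold by \reff{eq:int-c} and Fubini), so $\sup_{\varepsilon\in[1/(n+1),1/n]}|\cl_\varepsilon-\cl_{1/n}|\leq \int_{1/(n+1)}^{1/n} (M_{-r}+c(r)Z_0)\,dr \to 0$ a.s. as $n\to\infty$, since it is the tail of an a.s. convergent integral.

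The main obstacle I expect is the martingale identification step: verifying that $\rE[\cl_\varepsilon-\cl_\eta\mid\mathcal{F}_\eta]=0$ requires knowing that, conditionally on $\mathcal{F}_\eta$, the process $(M_{-r}, r\in[\varepsilon,\eta))$ still has the right conditional mean $c(r)Z_0$ — or rather $c(r)$ times an appropriate $\mathcal{F}_\eta$-measurable "population" — which is a statement about the backward (birth) dynamics of $M$ in Proposition \ref{prop:M-back} and the reconstruction in Proposition \ref{prop:ZfromM}. One clean way around it is to avoid the conditional mean computation entirely and instead exploit the representation \reff{eq:=timer} from Remark \ref{rem:tr-totl}: there $L_\varepsilon^0 \stackrel{(d)}{=}\sum_{x_j<Z_0}(\zeta_j-\varepsilon)_+$ for a Poisson point measure with intensity $dx\,|c'(t)|\,dt$, and the compensated sum $\sum_{x_j<Z_0}(\zeta_j-\varepsilon)_+ - Z_0\int_\varepsilon^\infty c(r)\,dr$ is manifestly an integral of a compensated Poisson functional, whose a.s. convergence as $\varepsilon\to 0$ follows from standard criteria for convergence of Poisson integrals (the second moment being finite, by Lemma \ref{lem:ntlp2}) — this gives a.s. convergence in distribution of the relevant object, which combined with the $L^2$ convergence already established pins down $W_0$. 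I would present the martingale route as the main argument and mention the Poisson representation as the conceptual explanation.
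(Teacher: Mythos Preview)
Your $L^2$ argument is correct and matches the paper's. The almost sure argument, however, has two genuine gaps.

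\textbf{The martingale claim is false.} With $\mathcal{F}_\eta=\sigma(Z_0,(M_{-r},r\geq\eta))$ you need $\rE[M_{-r}-c(r)Z_0\mid\mathcal{F}_\eta]=0$ for $r<\eta$. But by Lemma~\ref{lem:MZ}, $(Z_0,(M_{-t},t>0))$ has the law of $(Z_0,(M_0^t,t>0))$, and in that representation $M_0^r-M_0^\eta$ counts Poisson points with lifetime in $[r,\eta)$, which given $Z_0$ is independent of the points with lifetime $\geq\eta$ and has mean $(c(r)-c(\eta))Z_0$. Hence
\[
\rE[M_{-r}-c(r)Z_0\mid\mathcal{F}_\eta]=M_{-\eta}-c(\eta)Z_0,
\]
so $\rE[\cl_\varepsilon-\cl_\eta\mid\mathcal{F}_\eta]=(\eta-\varepsilon)\bigl(M_{-\eta}-c(\eta)Z_0\bigr)\neq 0$. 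The process $(\cl_{1/n})$ is not a martingale for this filtration.

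\textbf{The gap-filling step is also wrong.} You claim $\int_0^\delta M_{-r}\,dr<\infty$ and $\int_0^\delta c(r)\,dr<\infty$, citing \reff{eq:int-c}. But \reff{eq:int-c} says precisely the opposite: $\beta\int_\varepsilon^\infty c(r)\,dr=-\log(1-\expp{-2\beta\theta\varepsilon})\to\infty$ as $\varepsilon\downarrow 0$. Since $M_{-r}/c(r)\to Z_0>0$ a.s., $\int_0^\delta M_{-r}\,dr=\infty$ a.s. as well. So the bound $\sup_{\varepsilon\in[1/(n+1),1/n]}|\cl_\varepsilon-\cl_{1/n}|\leq\int_{1/(n+1)}^{1/n}(M_{-r}+c(r)Z_0)\,dr$ is not the tail of a convergent integral; it is the increment of a divergent one, and you have not shown it tends to $0$.

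The paper avoids both issues with a direct Borel--Cantelli argument. From Lemma~\ref{lem:ntlp2} one has $\rE[(\cl_\eta-W_0)^2]\leq C\eta$ for small $\eta$; choosing $a_n=1/n^2$ makes this summable, so $\cl_{a_n}\to W_0$ a.s. For $\varepsilon\in[a_{n+1},a_n]$ the paper sandwiches
\[
\cl_{a_n}-Z_0\int_{a_{n+1}}^{a_n}c(r)\,dr\leq \cl_\varepsilon\leq \cl_{a_{n+1}}+Z_0\int_{a_{n+1}}^{a_n}c(r)\,dr,
\]
using that $L_\varepsilon^0$ is monotone in $\varepsilon$ and the compensator is monotone the other way. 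The key point is that one only needs $\int_{a_{n+1}}^{a_n}c(r)\,dr\leq c(a_{n+1})(a_n-a_{n+1})=O(1/n)\to 0$, not finiteness of $\int_0^\delta c(r)\,dr$. Your Poisson-representation alternative could in principle be made to work (the a.s.\ convergence event is determined by the law of the process $(\cl_\varepsilon,\varepsilon>0)$, and \reff{eq:=timer} transfers the joint law), but as written ``a.s.\ convergence in distribution'' is not meaningful, and in any case it is more circuitous than the paper's two-line Borel--Cantelli route.
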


By  stationarity, we  deduce that  the  distribution of  $W_s$ does  not
depend on $s$.  Furthermore, we have the  convergence a.s. and in $L^2$ of
the  finite dimensional  marginals of  the  process $(\cl_\varepsilon^s,
s\in  \R)$  towards   those  of  the  process  $W=(W_s,   s\in  \R)$  as
$\varepsilon$ goes down to $0$.

\begin{proof}
  By stationarity,  we only need to  consider the case  $s=0$. We deduce
  from Lemma \ref{lem:ntlp2} the $L^2$ convergence of $(\cl_\varepsilon,
  \varepsilon>0)$  as $\varepsilon$  goes down  to $0$  towards  a limit
  $W_0$ as well as the first and second moment of $W_0$.

We now prove the a.s. convergence. We deduce from Lemma \ref{lem:ntlp2}
that for $\eta>0$ small enough,
\[
\rE\left[\left(\cl_\eta-W_0\right)^2\right]=
\frac{1}{\beta^2\theta^2}\Li_2(1-\expp{-2\beta\theta
    \eta}) \leq  \frac{4\eta}{\beta\theta}\cdot
\]
Set $a_n=1/n^2$ for $n\in \N^*$.
We deduce that $(\cl_{a_n}, n\in \N^*)$ converges a.s. to $W_0$. For
$\varepsilon\in [a_{n+1}, a_n]$, we have:
\[
\cl_{a_n} - Z_0\int_{a_{n+1}}^{a_n} c(r)\, dr \leq \cl_\varepsilon \leq  \cl_{a_{n+1}} +
Z_0\int_{a_{n+1}}^{a_n} c(r)\, dr.
\]
Since for $n$ large enough:
\[
\int_{a_{n+1}}^{a_n} c(r)\, dr\leq  c(a_{n+1}) (a_n -a_{n+1}) \leq
\frac{5}{\beta n},
\]
we deduce that $(\cl_\varepsilon,
  \varepsilon>0)$ converges a.s. to $W_0$  as $\varepsilon$  goes down
  to $0$.
\end{proof}

\begin{rem}
   \label{rem:equivL}
We deduce from Lemma
\ref{lem:ntlp2} and \reff{eq:momentz} that:
\[
\rE\left[(L_\varepsilon)^2\right]
= \rE\left[(\cl_\varepsilon)^2\right]+ \frac{2}{\beta}
\log(1-\expp{-2\beta\theta \varepsilon}) \rE[Z_0L_\varepsilon] - \frac{3}{2\beta^2\theta^2}
\log(1-\expp{-2\beta\theta \varepsilon})^2.
\]
Arguing as in the proof of Lemma
\ref{lem:ntlp2} we get:
\[
\rE[Z_0L_\varepsilon]=\int_\varepsilon^\infty c(r)\, dr\, \rE[Z_0^2]=\frac{3}{2\beta\theta^2}
\log(1-\expp{-2\beta\theta \varepsilon}).
\]
This gives:
\[
\rE\left[(L_\varepsilon)^2\right]
=\rE\left[(\cl_\varepsilon)^2\right]+ \frac{3}{2\beta^2\theta^2}
\log(1-\expp{-2\beta\theta \varepsilon})^2.
\]
We get
the following equivalent
for the expectation and variance
of $L_\varepsilon$ as $\varepsilon$ goes down to $0$:
$$
\rE[L_\varepsilon]\sim_{0+} \frac{1}{\beta \theta}
\log (1/ \varepsilon)
\quad\text{and}\quad
\text{Var}(L_\varepsilon)\sim_{0+} \frac{1}{2 \beta^2 \theta^2}
\log(1/\varepsilon)^2.
$$
\end{rem}

\subsection{Distribution and fluctuation for the 1-dimensional marginal}

We  provide the distribution of $W_0$ via its Laplace transform.
\begin{lem}\label{laplace-W}
For $\lambda>0$ and $z>0$, we have:
\[
\rE\left[\expp{-2 \beta\theta\lambda W_0}\,|\,Z_0 =\frac{z}{2\theta}\right]
=\expp{-z\varphi(\lambda)}
\quad\text{and}\quad
\rE\left[\expp{-2\lambda\beta \theta  W_0}\right]
=\left(1+\varphi(\lambda)\right)^{-2},
\]
with
\[
\varphi(\lambda)=  -\lambda \int_0^1 dv\, \frac{1-v^{\lambda}}{1-v}\cdot
\]
\end{lem}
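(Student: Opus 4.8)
The plan is to compute the law of $W_0$ conditionally on $Z_0$ by identifying it with a compensated Poisson integral, and then to integrate out $Z_0$ using \reff{eq:exp-Z}. The key input is the time-reversal representation \reff{eq:=timer} of Remark \ref{rem:tr-totl}, which, being obtained from Lemma \ref{lem:MZ} and \reff{eq:M=zeta}, is in fact an identity in law jointly with $Z_0$: conditionally on $Z_0$, the whole family $(L_\varepsilon)_{\varepsilon>0}$ is distributed as $\big(\sum_{x_j<Z_0}(\zeta_j-\varepsilon)_+\big)_{\varepsilon>0}$, where $\sum_j \delta_{x_j,\zeta_j}$ is a Poisson point measure on $(0,+\infty)^2$ with intensity $dx\,|c'(t)|\,dt$. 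Since $c$ is decreasing with $\lim_{t\to\infty} t\,c(t)=0$, an integration by parts gives $\int_\varepsilon^\infty c(r)\,dr=\int_0^\infty (t-\varepsilon)_+\,|c'(t)|\,dt$, so the compensator in \reff{eq:defcl} is precisely the compensator of this Poisson functional; hence, conditionally on $Z_0$, the process $(\cl_\varepsilon)_{\varepsilon>0}$ has the same law as $\big(\sum_{x_j<Z_0}(\zeta_j-\varepsilon)_+-Z_0\int_0^\infty (t-\varepsilon)_+\,|c'(t)|\,dt\big)_{\varepsilon>0}$. Because $\int_0^\infty (t\wedge t^2)\,|c'(t)|\,dt<\infty$ (recall $|c'(t)|\sim_{0+}1/(\beta t^2)$, and $|c'(t)|$ decays exponentially at $+\infty$), the right-hand side converges in $L^2$ as $\varepsilon\downarrow0$ to the compensated Poisson integral with L\'evy measure $Z_0\,|c'(t)|\,dt$; since $\cl_\varepsilon\to W_0$ in $L^2$ by Theorem \ref{cl2conver}, $W_0$ has, conditionally on $Z_0$, the law of that compensated Poisson integral, and the L\'evy--Khintchine formula for compensated Poisson integrals gives, for $\mu\ge0$,
\[
\rE\big[\expp{-\mu W_0}\,\big|\,Z_0\big]=\exp\!\Big(-Z_0\int_0^\infty \big(1-\expp{-\mu t}-\mu t\big)\,|c'(t)|\,dt\Big).
\]

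It remains to evaluate the exponent at $\mu=2\beta\theta\lambda$. I would substitute $v=\expp{-2\beta\theta t}$: then $c=2\theta v/(1-v)$, $|c'(t)|\,dt=2\theta\,(1-v)^{-2}\,dv$, $\expp{-2\beta\theta\lambda t}=v^{\lambda}$ and $2\beta\theta\lambda t=-\lambda\log v$, which turns $\tfrac1{2\theta}\int_0^\infty (1-\expp{-2\beta\theta\lambda t}-2\beta\theta\lambda t)\,|c'(t)|\,dt$ into $\int_0^1 \frac{1-v^{\lambda}+\lambda\log v}{(1-v)^2}\,dv$. Integrating this by parts against the primitive $v\mapsto v/(1-v)$ of $(1-v)^{-2}$, both boundary terms vanish — the numerator $p(v)=1-v^{\lambda}+\lambda\log v$ behaves like $-\tfrac{\lambda^2}{2}(1-v)^2$ at $v=1$ and like $\lambda\log v$ at $v=0$, the latter killed against the factor $v/(1-v)\sim v$ — and since $p'(v)=\tfrac{\lambda}{v}(1-v^{\lambda})$ one is left with $-\lambda\int_0^1\frac{1-v^{\lambda}}{1-v}\,dv=\varphi(\lambda)$. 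Plugging this in with $Z_0=z/(2\theta)$ gives the first identity $\rE[\expp{-2\beta\theta\lambda W_0}\,|\,Z_0=z/(2\theta)]=\expp{-z\varphi(\lambda)}$. Integrating over $Z_0$ by means of \reff{eq:exp-Z}, applied to the argument $2\theta\varphi(\lambda)$ (which is $>-2\theta$ precisely when $\varphi(\lambda)>-1$, with value $+\infty$ at equality), yields $\rE[\expp{-2\beta\theta\lambda W_0}]=\rE[\expp{-2\theta\varphi(\lambda)Z_0}]=(1+\varphi(\lambda))^{-2}$, the second identity.

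The delicate point is the limit $\varepsilon\downarrow0$: one cannot move it inside $\rE[\expp{-\mu\cl_\varepsilon}\,|\,Z_0]$ by dominated convergence because of the left tail of $\cl_\varepsilon$. The argument above circumvents this by transporting the whole path $(\cl_\varepsilon)_{\varepsilon>0}$ to the Poisson side via \reff{eq:=timer}, where the $L^2$-convergence of compensated sums and the explicit Laplace transform of the limiting compensated Poisson integral are classical, and then matching the two $L^2$-limits, which have the same law for each $\varepsilon$. Everything else — the integration by parts producing the Poisson compensator, the change of variables, and the final integration by parts identifying $\varphi$ — is elementary.
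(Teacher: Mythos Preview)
Your proof is correct and follows essentially the same route as the paper's: transfer to the Poisson side via \reff{eq:=timer}, compute the conditional Laplace transform there using the exponential formula, and reduce the exponent to $\varphi(\lambda)$ by the change of variable $v=\expp{-2\beta\theta t}$ followed by an integration by parts. Your handling of the $\varepsilon\downarrow 0$ passage---matching $L^2$-limits on both sides rather than passing to the limit inside the conditional Laplace transform---is a touch more careful than the paper's direct limit in $K_\varepsilon(\lambda)$, but the argument is otherwise identical.
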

From the proof of below, we
get that the distribution of $W_0$ is infinitely
divisible (conditionally on $Z_0$ or not).
Notice that for $\lambda= n$, we get $\varphi(n)=- n H_n$, where $H_n$ is the
harmonic number.

\begin{proof}
We use notations from Remark \ref{rem:tr-totl}.
According to Remark \ref{rem:tr-totl}, see also \reff{eq:=timer}, and
since  $c(t)=\N[\zeta>t]$, we get that
$(Z_0,(\cl_\varepsilon, \varepsilon>0))$ is distributed as
$(Z_0, (\tilde \cl_\varepsilon, \varepsilon>0))$, with:
\[
\tilde \cl_\varepsilon= \sum_{x_j<Z_0} (\zeta_j -\varepsilon)_+ -Z_0
\N[(\zeta-\varepsilon)_+].
\]
In particular
$W_0$ is distributed as
$\tilde W_0= \lim_{\varepsilon \downarrow  0} \tilde \cl_\varepsilon$.

The exponential formula for Poisson point measure gives for any $\lambda>0$:
\[
\rE\left[\expp{-\lambda \tilde \cl_\varepsilon}|Z_0 =\frac{z}{2\theta}\right]
= \exp \left( -\frac{z}{2\theta} \N\left[1-\expp{-\lambda (\zeta-\varepsilon)_+}\right]
+ \frac{\lambda z}{2\theta} \N[(\zeta-\varepsilon)_+]\right)
= \exp ( -zK_\varepsilon(\lambda))
\]
with
\begin{align*}
   K_\varepsilon(\lambda)
&=\frac{1}{2\theta}\N\left[1- \expp{-\lambda
    (\zeta-\varepsilon)_+} -\lambda   (\zeta-\varepsilon)_+  \right]\\
&= 2\beta\theta\int_\varepsilon^\infty dt \; \frac{\expp{2\beta\theta t}}{(\expp{2\beta\theta t} -1)^2}
\left(1- \expp{-\lambda
    (t-\varepsilon)}- \lambda   (t-\varepsilon) \right)\\
&= 2\beta\theta\expp{2\beta\theta \varepsilon} \int_0^\infty dt \;
\frac{\expp{2\beta\theta  t}}{( \expp{2\beta\theta (t+\varepsilon)} -1)^2}
\left(1-\expp{-\lambda    t}- \lambda t  \right),
\end{align*}
where we used that $\N[d\zeta]_{|\zeta=t}=-c'(t)
\;dt=4\beta\theta^2\frac{\expp{2\beta\theta t}}{(\expp{2\beta\theta t}
  -1)^2}\;  dt$ for the second equality.
Notice that by dominated convergence:
\[
 \lim_{\varepsilon \downarrow
  0} K_\varepsilon(\lambda) = \psi(\lambda)
\]
with
\[
\psi(u)= 2\beta\theta \int_0^\infty dt \; \frac{\expp{2\beta\theta t}}{(\expp{2\beta\theta t} -1)^2}
\left(1- \expp{-\lambda
    t}- \lambda  t \right).
\]
Letting $\varepsilon$ goes down to $0$, we  deduce the Laplace transform
of $W_0$, for $\lambda>0$:
\[
\rE\left[\expp{-2 \beta\theta\lambda W_0}|Z_0 =\frac{z}{2\theta}\right]
=\expp{-z\varphi(\lambda)}
\]
with
\begin{align*}
\varphi(\lambda)=\psi(2\beta\theta\lambda )
&= \int_0^\infty dt \; \frac{\expp{ t}}{(\expp{t} -1)^2}
\left(1- \expp{-\lambda
    t}-\lambda  t \right)\\
&= -2\lambda \int_0^\infty dt \; \frac{1-\expp{-\lambda
    t}}{\expp{ t} -1}\\
&= -\lambda \int_0^1 dv\, \frac{1-v^{\lambda}}{1-v},
\end{align*}
where  we used an  integration by  part in  the third  equality.  Notice
that, conditionally on $Z_0$,  $W_0$ is infinitely divisible with L\'evy
measure $|c'(t)|\, dt$.

We also have:
\begin{align*}
\rE\left[\expp{-2\beta\theta\lambda W_0-\mu Z_0}\right]
&=\rE\left[\expp{-\mu Z_0-2\theta\varphi(\lambda) Z_0}\right]\\
&=\left(1+\varphi(\lambda)+\frac{\mu}{2\theta}\right)^{-2}.
\end{align*}
The result follows by taking $\mu=0$.
\end{proof}

We also give the following result on the fluctuations of $\cl_\varepsilon$.

\begin{prop}
   \label{prop:fluctW}
We have the following convergence in distribution:
\[
(Z_0, \sqrt{\beta /\varepsilon} (\cl_\varepsilon-W_0))
\xrightarrow[\varepsilon\to 0]{\text{(d)}}
(Z_0, \sqrt{2Z_0} G ),
\]
with $G\sim \cn(0,1)$ a standard Gaussian random variable independent of
$Z_0$.
\end{prop}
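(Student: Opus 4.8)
The plan is to use the compensated-Poisson (Lévy--Khintchine) representation of $W_0-\cl_\varepsilon$ that comes from the time-reversal identity, and to identify the Gaussian limit through conditional characteristic functions given $Z_0$.

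First I would record the representation. The proof of Lemma~\ref{laplace-W} shows that, jointly with $Z_0$, $(\cl_\varepsilon,\varepsilon>0)$ has the law of $(\tilde\cl_\varepsilon,\varepsilon>0)$ with $\tilde\cl_\varepsilon=\sum_{x_j<Z_0}(\zeta_j-\varepsilon)_+-Z_0\,\N[(\zeta-\varepsilon)_+]$, where $\sum_j\delta_{x_j,\zeta_j}$ is a Poisson point measure on $(0,+\infty)^2$ with intensity $dx\,|c'(t)|\,dt$ independent of $Z_0$, and $W_0$ has the law of $\tilde W_0:=\lim_{\varepsilon\downarrow0}\tilde\cl_\varepsilon$. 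Using $(\zeta-\eta)_+-(\zeta-\varepsilon)_+=\zeta\wedge\varepsilon-\zeta\wedge\eta$ for $0<\eta<\varepsilon$ and letting $\eta\downarrow0$ in $\tilde\cl_\eta-\tilde\cl_\varepsilon$ — here $S_\eta:=\sum_{x_j<Z_0}(\zeta_j\wedge\eta)\to0$ a.s.\ (it is nonincreasing with $\rE[S_\eta\,|\,Z_0]=Z_0\N[\zeta\wedge\eta]\to0$) and $\N[\zeta\wedge\eta]\to0$ — one gets, jointly with $Z_0$,
\[
W_0-\cl_\varepsilon\ \stackrel{\text{(d)}}{=}\ S_\varepsilon-\rE[S_\varepsilon\,|\,Z_0],
\qquad S_\varepsilon=\sum_{x_j<Z_0}(\zeta_j\wedge\varepsilon).
\]
Conditionally on $Z_0$, the finite variable $S_\varepsilon$ is the integral of the bounded function $t\mapsto t\wedge\varepsilon$ against a Poisson point measure of intensity $Z_0\,|c'(t)|\,dt$, and $\rE[S_\varepsilon\,|\,Z_0]=Z_0\,\N[\zeta\wedge\varepsilon]$.

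Next I would compute the conditional characteristic function. By the exponential formula, for $u\in\R$,
\[
\rE\!\left[\expp{iu\,(S_\varepsilon-\rE[S_\varepsilon|Z_0])}\,\big|\,Z_0\right]
=\exp\!\left(Z_0\int_0^\infty\big(\expp{iu(t\wedge\varepsilon)}-1-iu(t\wedge\varepsilon)\big)\,|c'(t)|\,dt\right).
\]
Take $u=-\xi\sqrt{\beta/\varepsilon}$. Since $0\le t\wedge\varepsilon\le\varepsilon$, we have $\sup_{t}|u(t\wedge\varepsilon)|\le|\xi|\sqrt{\beta\varepsilon}\to0$, so from $\big|\expp{ix}-1-ix+\tfrac{x^2}{2}\big|\le\tfrac{|x|^3}{6}$,
\[
\Big|\int_0^\infty\!\big(\expp{iu(t\wedge\varepsilon)}-1-iu(t\wedge\varepsilon)\big)|c'(t)|\,dt+\frac{u^2}{2}\int_0^\infty(t\wedge\varepsilon)^2|c'(t)|\,dt\Big|\ \le\ \frac{|u|^3}{6}\int_0^\infty(t\wedge\varepsilon)^3|c'(t)|\,dt.
\]
Now $\int_0^\infty(t\wedge\varepsilon)^2|c'(t)|\,dt=\int_0^\varepsilon t^2|c'(t)|\,dt+\varepsilon^2c(\varepsilon)$, and by \reff{eq:defc} both terms are equivalent to $\varepsilon/\beta$ as $\varepsilon\downarrow0$ (since $|c'(t)|\sim_{0+}1/(\beta t^2)$, cf.\ Remark~\ref{rem:AldPop}, and $c(t)\sim_{0+}1/(\beta t)$), so this integral is equivalent to $2\varepsilon/\beta$; similarly $\int_0^\infty(t\wedge\varepsilon)^3|c'(t)|\,dt\le\varepsilon\int_0^\infty(t\wedge\varepsilon)^2|c'(t)|\,dt=O(\varepsilon^2)$. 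Hence $\tfrac{u^2}{2}\int_0^\infty(t\wedge\varepsilon)^2|c'|\to\xi^2$ while $\tfrac{|u|^3}{6}\int_0^\infty(t\wedge\varepsilon)^3|c'|=O(\sqrt\varepsilon)\to0$, so the integral in the exponent converges to $-\xi^2$ and therefore
\[
\rE\!\left[\expp{-i\xi\sqrt{\beta/\varepsilon}\,(S_\varepsilon-\rE[S_\varepsilon|Z_0])}\,\big|\,Z_0\right]\ \longrightarrow\ \expp{-\xi^2Z_0}\qquad\text{a.s.\ as }\varepsilon\downarrow0.
\]

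Finally, for $\eta,\xi\in\R$, the representation above and conditioning on $Z_0$ give
\[
\rE\!\left[\expp{i\eta Z_0+i\xi\sqrt{\beta/\varepsilon}\,(\cl_\varepsilon-W_0)}\right]
=\rE\!\left[\expp{i\eta Z_0}\,\rE\!\big[\expp{-i\xi\sqrt{\beta/\varepsilon}\,(S_\varepsilon-\rE[S_\varepsilon|Z_0])}\,\big|\,Z_0\big]\right],
\]
which, by dominated convergence (the inner expectation being bounded by $1$), tends to $\rE[\expp{i\eta Z_0-\xi^2 Z_0}]$. Since $\rE[\expp{i\xi\sqrt{2Z_0}\,G}\,|\,Z_0]=\expp{-\xi^2Z_0}$ for $G\sim\cn(0,1)$ independent of $Z_0$, this limit equals $\rE[\expp{i\eta Z_0+i\xi\sqrt{2Z_0}\,G}]$, and the Lévy continuity theorem applied to the pair $(Z_0,\sqrt{\beta/\varepsilon}(\cl_\varepsilon-W_0))$ yields the announced convergence in distribution. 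I expect the main obstacle to be not any single estimate but the two reductions above: obtaining the compensated-Poisson representation $W_0-\cl_\varepsilon\stackrel{\text{(d)}}{=}S_\varepsilon-\rE[S_\varepsilon|Z_0]$, where one must handle the fact that the uncompensated quantities $\sum_{x_j<Z_0}\zeta_j$ and $\N[\zeta]$ are infinite and only the truncation $\zeta\wedge\varepsilon$ is integrable; and the asymptotics $\int_0^\infty(t\wedge\varepsilon)^2|c'(t)|\,dt\sim2\varepsilon/\beta$, whose factor $2$ — the equal contributions of $\int_0^\varepsilon t^2|c'|$ and $\varepsilon^2c(\varepsilon)$ — is exactly what produces the variance $2Z_0$ of the Gaussian limit.
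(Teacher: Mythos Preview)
Your overall route is the paper's: compute the conditional characteristic function of $\cl_\varepsilon-W_0$ given $Z_0$ via the Poisson representation from time reversal, expand the L\'evy exponent, and read off the Gaussian limit. The Taylor estimate, the asymptotic $\int_0^\infty(t\wedge\varepsilon)^2|c'(t)|\,dt\sim 2\varepsilon/\beta$, and the final joint-characteristic-function argument are all correct.

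There is, however, a real gap in your derivation of the representation $W_0-\cl_\varepsilon\stackrel{\text{(d)}}{=}S_\varepsilon-\rE[S_\varepsilon\,|\,Z_0]$. You claim $S_\eta\to 0$ a.s.\ and $\N[\zeta\wedge\eta]\to 0$, but both are false: since $c(t)=\N[\zeta>t]\sim 1/(\beta t)$ near $0$, one has $\N[\zeta\wedge\eta]=\int_0^\eta c(s)\,ds=+\infty$ for every $\eta>0$, and correspondingly $S_\eta=+\infty$ a.s. Your closing remark that ``only the truncation $\zeta\wedge\varepsilon$ is integrable'' is thus mistaken; what is finite is $\N[(\zeta\wedge\varepsilon)^2]$, not $\N[\zeta\wedge\varepsilon]$. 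The paper avoids ever writing $S_\varepsilon$ alone: it computes
\[
\rE\big[\expp{i\lambda(\cl_\varepsilon-\cl_\eta)}\,\big|\,Z_0\big]=\exp\!\big(-Z_0\, f_{\varepsilon,\eta}(\lambda)\big),\qquad
f_{\varepsilon,\eta}(\lambda)=\N\!\left[1-\expp{i\lambda((\zeta-\varepsilon)_+-(\zeta-\eta)_+)}+i\lambda\big((\zeta-\varepsilon)_+-(\zeta-\eta)_+\big)\right],
\]
and lets $\eta\downarrow0$ \emph{inside the exponent} by dominated convergence, using $|(\zeta-\eta)_+-(\zeta-\varepsilon)_+|\le\zeta\wedge\varepsilon$ and $\N[(\zeta\wedge\varepsilon)^2]<\infty$; together with $\cl_\eta\to W_0$ in $L^2$ this yields $\rE[\expp{i\lambda(\cl_\varepsilon-W_0)}\,|\,Z_0]=\exp(-Z_0 f_\varepsilon(\lambda))$ with $f_\varepsilon(\lambda)=\N[1-\expp{-i\lambda(\zeta\wedge\varepsilon)}-i\lambda(\zeta\wedge\varepsilon)]$, which is exactly the integral you wrote down. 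From that point your argument goes through unchanged. Alternatively you may appeal directly to the $L^2$ theory of compensated Poisson integrals (square-integrability of $t\wedge\varepsilon$ against $|c'(t)|\,dt$ is what makes the compensated sum well defined), but either way the expression $S_\varepsilon-\rE[S_\varepsilon\,|\,Z_0]$ must be interpreted as a limit, not as a difference of two finite quantities.
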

\begin{proof}
   We keep notations from the proof of Lemma \ref{laplace-W}. Mimicking
   the proof of Lemma \ref{laplace-W}, we get for $\lambda\in \R$,
   $\varepsilon>\eta>0$ :
\[
\rE\left[\expp{i\lambda ( \cl_\varepsilon- \cl_\eta)}|Z_0\right]
= \expp{ -Z_0 f_{\varepsilon,\eta}(\lambda)},
\]
with
\[
f_{\varepsilon,\eta}(\lambda)
=\N\left[1- \expp{i\lambda ((\zeta-\varepsilon)_+ - (\zeta-\eta)_+)}
+i\lambda  ((\zeta-\varepsilon)_+ - (\zeta-\eta)_+)\right].
\]
Notice that $0\leq (x-\eta)_+ -(x-\varepsilon)_+\leq  x \wedge
\varepsilon$ for $x>0$. Since $\N[(\zeta\wedge \varepsilon)^2]$ is
finite, we deduce by dominated convergence that
$\lim_{\eta\rightarrow 0} f_{\varepsilon,
  \eta}(\lambda)=f_\varepsilon(\lambda)$,
with
\begin{align*}
f_\varepsilon(\lambda)
&=
\N\left[1- \expp{-i\lambda (\zeta\wedge\varepsilon)}
-i\lambda  (\zeta\wedge\varepsilon)\right]\\
&=4\beta\theta^2 \int_0^\varepsilon dt\; \frac{\expp{2\beta\theta t}}{(
\expp{2\beta\theta t} -1)^2} ( 1-i\lambda t -\expp{-i\lambda t})
+ \frac{2\theta}{\expp{2\beta\theta \varepsilon} -1} (1-i\lambda
\varepsilon - \expp{-i\lambda \varepsilon}).
\end{align*}
We deduce that:
\[
\rE\left[\expp{i\lambda ( \cl_\varepsilon- W_0)}|Z_0\right]
= \expp{ -Z_0 f_{\varepsilon}(\lambda)}
\]
and thus for $\mu\in \R$:
\[
\rE\left[\exp\left({i\lambda \frac{ \cl_\varepsilon-
      W_0}{\sqrt{\varepsilon}}+ i\mu Z_0}\right)\right]
=\rE\left[ \exp\left({ Z_0(i\mu -
      f_{\varepsilon}(\lambda/\sqrt{\varepsilon})) }\right)\right] .
\]
Dominated convergence yields:
\[
\lim_{\varepsilon\rightarrow 0}
f_{\varepsilon}(\lambda/\sqrt{\varepsilon})
= \frac{\lambda^2}{\beta}\cdot
\]
We deduce that:
\[
\lim_{\varepsilon\rightarrow 0}
\rE\left[\exp\left({i\lambda \frac{ \cl_\varepsilon-
      W_0}{\sqrt{\varepsilon}}+ i\mu Z_0}\right)\right]
=\rE\left[ \expp{ Z_0(i\mu - \frac{\lambda^2}{\beta})
    }\right]
= \rE\left[ \exp\left({ i\lambda \sqrt{\frac{2Z_0}{\beta}} G+
i\mu Z_0}\right)\right].
\]
This gives the result.
\end{proof}

\subsection{Path properties of  the process $W$}

We first give the covariance of the process $W$, whose proof is given in
Section \ref{sec:proof-cov}.

\begin{prop}\label{prop:ewswo}
Let $s\in \R_*$.  We have:
\begin{multline*}
\rE[W_0W_s]=\frac{1}{2\beta^2\theta^2}
\left[\frac{\pi^2}{6}\embs +\ebs \Li_2(\embs)\right]\\
 -2(\ebs-\embs)\int_0^\infty \frac{dr}{\eb{r}-1} \int_s^{s+r} \frac{dq}{\eb{q}-1}\cdot
\end{multline*}
\end{prop}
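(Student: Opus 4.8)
The plan is to compute $\rE[W_0 W_s]$ by passing to the limit $\varepsilon,\eta \downarrow 0$ in $\rE[\cl^0_\varepsilon \cl^s_\eta]$, using the $L^2$ convergence $\cl^0_\varepsilon \to W_0$ and $\cl^s_\eta \to W_s$ from Theorem \ref{cl2conver}. Thus it suffices to evaluate
\[
\rE[\cl^0_\varepsilon \cl^s_\eta]
= \int_\varepsilon^\infty dr \int_\eta^\infty dq\;
  \rE\big[(M_{-r}^0 - c(r)Z_0)(M_{s-q}^s - c(q)Z_s)\big],
\]
and then send $\varepsilon,\eta \to 0$. By stationarity I may replace $M^s_{s-q}$ by $M^0_{-q}$ and $Z_s$ by $Z_0$ only after shifting; more precisely, the pair $(M^0_{-r}, M^s_{s-q})$ involves the ancestors of \emph{two different} populations — one living at time $0$, one living at time $s>0$ — so the relevant object is the joint law of the backward ancestor process of $Z_0$ and of $Z_s$. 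I will write $s+q' = q$ style bookkeeping: the ancestors at time $s-q$ of the population at time $s$ and the ancestors at time $-r$ of the population at time $0$ share a common genealogical history up to some branching level.

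\textbf{Key computation.} The heart of the matter is the mixed second moment $\rE[M^0_{-r} M^s_{s-q}]$ for $r,q>0$, $s>0$. I would decompose according to the time-ordering of the levels $-r$ and $s-q$ on the infinite spine. There are two regimes: (a) $s - q \le -r$, i.e. $q \ge s+r$, in which case every ancestor at level $s-q$ of the population at $s$ that survives to level $-r$ also potentially contributes to $M^0_{-r}$ — here one uses the Markov/branching structure as in the proof of Lemma \ref{lem:MZ} (the decomposition \reff{eq:decomm}) together with the binomial thinning $c(r)/c(\cdot)$; (b) $s-q > -r$, i.e. $q < s+r$, where the level $-r$ lies below $s-q$, and one must further split the contribution of immigrants $Y^i$ with $t_i$ in various windows. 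In each case the expectation reduces, via $\rE[M_{-v}] = c(v)/\theta$ and $\rE[M_{-v} M_{-w}]$ from \reff{eq:EM}–\reff{eq:MrMt} and the fact that a branch surviving from level $a$ to level $b$ does so with probability $c(b-a)/c(\text{something})$ under the appropriate conditioning, to an explicit expression in the functions $c(\cdot)$. Subtracting the four cross terms $\rE[M^0_{-r} c(q) Z_0]$ etc.\ (all computable from Lemma \ref{lem:MZ} and $\rE[Z_0^2] = 3/2\theta^2$, $\rE[Z_0 Z_s]$ from \reff{eq:EZ0Zs}) I expect the integrand to collapse, as in Lemma \ref{lem:ntlp2}, to something like $\frac{c(r\vee(q-s))}{\theta}\,\emb{\cdot}$ plus a genuinely $s$-dependent correction supported on $q < s+r$. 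One then integrates: the part that survives to a dilogarithm will come from the region $q \ge s+r$ and, after the change of variables $y = \emb{t}$ used in Lemma \ref{lem:ntlp2}, will produce the $\frac{1}{2\beta^2\theta^2}\big[\frac{\pi^2}{6}\embs + \ebs \Li_2(\embs)\big]$ term; the double integral $\int_0^\infty \frac{dr}{\eb{r}-1}\int_s^{s+r}\frac{dq}{\eb{q}-1}$ is exactly what one gets from integrating the boundary correction term over the triangular region $\{0 < q - s < r\}$, with the prefactor $-2(\ebs - \embs)$ emerging from the $c$-function algebra.

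\textbf{Main obstacle.} The delicate step is bookkeeping the joint law of the two ancestor processes $(M^0_{-r})_{r>0}$ and $(M^s_{s-q})_{q>0}$ correctly in regime (b), $q < s+r$: here the ancestor at level $s-q$ of the population at $s$ may or may not be an ancestor of the population at $0$, and the immigrants contributing to $M^0_{-r}$ split into those with $t_i < -r$ (already counted), those with $t_i \in (-r, s-q)$, and those with $t_i \in (s-q, 0)$ whose descendants we must check reach level $0$ but the bookkeeping must avoid double-counting against the population at $s$. I would handle this exactly as in \reff{eq:decomm} and the proof of Lemma \ref{lem:mf=mb}, conditioning first on the common part and using the conditional-Poisson/conditional-binomial structure, then taking expectations. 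Once the integrand is in closed form, the remaining work is the change of variables $y=\emb{t}$ and recognizing the dilogarithm — routine but lengthy ("elementary but tedious," in the style of the Lemma \ref{lem:mf=mb} proof). I do not expect convergence issues: finiteness of all integrals near $0$ follows from $c(t)\sim 1/(\beta t)$ and $|c'(t)| \sim 1/(\beta t^2)$ together with the $L^2$ bound in Lemma \ref{lem:ntlp2}, which also justifies interchanging limit and integration by dominated convergence.
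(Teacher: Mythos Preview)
Your approach is essentially the paper's: reduce via the $L^2$ convergence of Theorem \ref{cl2conver} to computing $\rE[\cl_\varepsilon \cl^s_\varepsilon]$, evaluate the mixed moment $\rE[M_{-r}^0\, M_{s-q}^s]$ in the two regimes according to which of $-r$ and $s-q$ is the lower level (this is exactly Lemma \ref{lem:m0ms} in the paper), subtract the cross terms involving $Z_0,Z_s$, and integrate to the dilogarithm via the change of variables $y=\emb{t}$.

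One bookkeeping slip worth flagging: you have the difficulty of the two regimes reversed. In your regime (b), $q<s+r$, the level $-r$ lies \emph{below} $s-q$, so the clean decomposition is to condition on $M_{-r}$ and write $M^s_{s-q}=M^{I[-r,s-q]}+\sum_{j=1}^{M_{-r}}\tilde M^{(s+r),j}_{s+r-q}$ (immigrants landing in $(-r,s-q)$ plus descendants of each of the $M_{-r}$ ancestors); immigrants with $t_i\in(-r,0)$ never contribute to $M^0_{-r}$, so your three-window split there is moot. The genuinely delicate three-part decomposition --- immigrant term on $(s-q,-r)$, descendants of the $M^s_{s-q}$ ancestors, plus a Poisson contribution from $Z_{s-q}\,\N[dY;\zeta<q]$ for the populations born at $s-q$ that die before $s$ --- arises in your regime (a), $q\ge s+r$, and that extra $Z_{s-q}$-term is precisely what produces the double-integral correction $-2(\ebs-\embs)\int_0^\infty\frac{dr}{\eb{r}-1}\int_s^{s+r}\frac{dq}{\eb{q}-1}$ in the final answer.
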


\begin{rem}
\label{rem:cov}
We deduce that there exists some finite positive constant $C$
such that:
$$
\rE[(W_s-W_0)^2]  \sim_{0+} Cs\,  \log(s)^2. $$  This suggests  that the
process $W$ is not continuous. Indeed, recall definition \reff{eq:defcn}
and notice  the process $L^s_\varepsilon$, for  fixed $\varepsilon$, has
jumps at  least at  any time  $\zeta_i-t_i$ for any  $i\in I$  such that
$\zeta_i$, the death time of $Y^i$, is larger than $\varepsilon$. The
same holds for $W$.
\end{rem}

We have the following
result on the existence of a càdlàg version of $W$, whose proof is given
in Section \ref{sec:proofcadlag}.

\begin{theo}
   \label{theo:cadlag}
There exists a c\`adl\`ag $\R$-valued process $W'=(W'_s, s\in \R)$ having the
same finite dimensional marginals as $W$.
\end{theo}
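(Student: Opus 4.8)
The plan is to establish the existence of a c\`adl\`ag version of $W$ by exhibiting an explicit c\`adl\`ag process constructed directly from the Poisson point measure $\cn$ and verifying it has the right finite-dimensional marginals. First I would use the time-reversal identity from Remark \ref{rem:tr-totl} and equation \reff{eq:=timer}: at time $s=0$, $W_0$ is distributed as $\tilde W_0 = \sum_{x_j<Z_0} \zeta_j - Z_0 \N[\zeta]$ (formally), where $\sum_{j} \delta_{x_j,\zeta_j}$ is a Poisson point measure with intensity $dx\,|c'(t)|\,dt$, independent of $Z_0$. But to get the whole \emph{process} $(W_s, s\in\R)$ and not merely the one-dimensional marginal, I would go back to the genealogical picture: recall $L_\varepsilon^s = \int_\varepsilon^\infty M_{s-r}^s\,dr$ and $M_{s-r}^s = \sum_{i\in I} \ind_{\{t_i<s-r\}} R_{s-r-t_i}^{s-t_i}(H^i)$. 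Interchanging sum and integral gives $L_\varepsilon^s = \sum_{i; t_i<s} \int_\varepsilon^{\infty} \ind_{\{r > s - t_i - \text{?}\}}\cdots$; more cleanly, each excursion $Y^i$ (equivalently $H^i$) grafted at time $t_i$ contributes to the genealogical tree of the population at time $s$ a ``sub-tree length'' $\ell_\varepsilon(H^i, s-t_i)$ depending only on $H^i$ and on the elapsed time $a_i := s - t_i > 0$. So $L_\varepsilon^s = \sum_{i; t_i<s} \ell_\varepsilon(H^i, s-t_i)$ and, after compensation, $\cl_\varepsilon^s = \sum_{i;t_i<s} \big(\ell_\varepsilon(H^i,s-t_i) - 2\beta\,\N[\ell_\varepsilon(\cdot, s-t_i)]\text{-density}\big)$, which, letting $\varepsilon\downarrow 0$, should converge to $W_s = \sum_{i; t_i<s} \Phi(H^i, s-t_i) - (\text{compensator})$ for an appropriate functional $\Phi$.

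The key point then is that each summand $s \mapsto \Phi(H^i, s-t_i)$ (for $t_i$ fixed and the excursion $H^i$ fixed) is itself a c\`adl\`ag — in fact piecewise-smooth with a single jump — function of $s$, because the length of the genealogical subtree of a fixed excursion $Y^i$ that survives at least $a$ units of time is a continuous function of $a$ away from the death time $\zeta_i$ of $Y^i$ and jumps down by an amount related to the compensator at $a = \zeta_i$ (as noted in Remark \ref{rem:cov}, the jumps of $W$ sit at the times $\zeta_i - t_i$). Then I would define $W'_s$ to be precisely this sum (with the compensator subtracted), and prove: (a) the sum converges in $L^2$ locally uniformly in $s$, using the $L^2$ bounds of Lemma \ref{lem:ntlp2} together with the Poissonian structure — the variance of the tail of the sum over excursions with $\zeta_i - t_i$ small is controlled by $\rE[(\cl_\eta)^2] = \beta^{-2}\theta^{-2}\Li_2(1-\expp{-2\beta\theta\eta})$ which is $O(\eta)$; (b) the resulting process is c\`adl\`ag, since a locally uniform $L^2$ (hence, along a subsequence, a.s. uniform) limit of finite sums of c\`adl\`ag functions is c\`adl\`ag, provided one checks that only finitely many summands have jumps of size $\geq \delta$ in any compact time window — and this finiteness is exactly a property of the Poisson point measure $\cn$ restricted to a bounded region, since $\N[\zeta > \delta'] = c(\delta') < \infty$; (c) the finite-dimensional marginals of $W'$ agree with those of $W$, which follows because for each fixed $s_1,\dots,s_n$ the vector $(W'_{s_1},\dots,W'_{s_n})$ is an $L^2$-limit of $(\cl_\varepsilon^{s_1},\dots,\cl_\varepsilon^{s_n})$, and Theorem \ref{cl2conver} already identifies these limits with $(W_{s_1},\dots,W_{s_n})$.

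The main obstacle I expect is step (a)–(b): controlling the convergence of the series over excursions \emph{uniformly in $s$} on compact intervals, rather than pointwise in $s$. The per-excursion compensated contribution is a martingale-difference-like quantity as $\varepsilon\downarrow 0$, and one needs a maximal inequality — e.g. applying Doob's $L^2$ inequality to the martingale $\varepsilon \mapsto \cl_\varepsilon^s$ for fixed $s$, then a chaining / covering argument in the $s$-variable using the covariance estimate of Proposition \ref{prop:ewswo} and its consequence $\rE[(W_s - W_0)^2] \sim_{0+} Cs\log(s)^2$. The modulus $s\log(s)^2$ is better than any power $s^{1-\delta}$ is worse than Lipschitz, so a Kolmogorov-type continuity argument does \emph{not} directly yield a continuous version (consistent with the paper's remark that $W$ is genuinely discontinuous), but it is comfortably enough to run a Billingsley-style tightness/c\`adl\`ag criterion: one shows $\rE[|W_{s} - W_{s_1}|\,|W_{s_2} - W_s|] \leq C\,\omega(|s_2-s_1|)$ for $s_1 \leq s \leq s_2$ with $\omega(h) = h\log(1/h)^2 \to 0$, which rules out oscillatory (non-jump) discontinuities and, combined with the explicit Poissonian jump structure above, gives the c\`adl\`ag modification. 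I would set up this estimate by the same second-moment computation as in Lemma \ref{lem:ntlp2}, now for increments of the full process, reducing everything to integrals of $c$ and products $\rE[(M_{s-r}^s - c(r)Z_s)(M_{s'-r'}^{s'} - c(r')Z_{s'})]$, which are computable from \reff{eq:EM}, \reff{eq:EZ0Zs} and Lemma \ref{lem:MZ}.
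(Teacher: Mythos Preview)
Your two sketched routes are both different from the paper's proof, and the one you label ``comfortably enough'' has a genuine gap.

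\medskip

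\textbf{The Billingsley step does not go through with second moments alone.} You propose to verify the c\`adl\`ag criterion via
\[
\rE\big[\,|W_{s}-W_{s_1}|\,|W_{s_2}-W_{s}|\,\big]\;\le\;C\,\omega(|s_2-s_1|),\qquad \omega(h)=h\,\log(1/h)^2.
\]
But the Billingsley criterion the paper invokes (their condition (ii), cf.\ \cite[Theorem~3.16]{b:cpm}) requires a bound of the form $C\lambda^{-4\gamma}(s+t)^{1+\delta}$ with some $\delta>0$; in moment form this means one needs the right-hand side to be $O(|s_2-s_1|^{1+\delta})$. The function $h\log(1/h)^2$ is \emph{not} $O(h^{1+\delta})$ for any $\delta>0$, so your Cauchy--Schwarz bound, which is sharp given only $\rE[(W_s-W_0)^2]\sim Cs\log(s)^2$, is exactly one logarithmic order short. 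This is precisely why the paper works harder: it splits $W_s-W_0=\sum_{i=1}^4 A_i(s,h)$ with an auxiliary level $h\gg s+t$, and gains the missing power in two different ways. For the ``ancestor mismatch'' piece $A_1$, the paper exploits the \emph{minimum} in $|A_1(s,h)|\wedge|A_1(-t,h)|$: the event $\{A_1(s,h)\neq 0,\ A_1(-t,h)\neq 0\}$ forces $M_{-h}\neq M_{-h}^s$ \emph{and} $M_{-h}\neq M_{-h}^{-t}$, and the binomial thinning structure (Corollary~\ref{cor:binR}) turns this joint event into a product, yielding $\rP\le C\,st/h^4\le C(s+t)^2/h^4$. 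For $A_2,A_3,A_4$ the paper uses fourth and $2p$-th moments, not second moments. Without one of these two devices (joint-event factorisation, or higher moments), the exponent stays at $1$ and the criterion fails.

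\medskip

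\textbf{On your first route (explicit Poisson construction).} This is a reasonable idea and is genuinely different from the paper, but your outline does not close the main gap you yourself flag: you assert that the per-excursion compensated contributions sum to a c\`adl\`ag limit because the tail variance is $O(\eta)$. That is \emph{pointwise} control in $s$; it does not give locally uniform convergence, and ``$L^2$ locally uniform $\Rightarrow$ a.s.\ uniform along a subsequence'' is false without a further maximal inequality in the $s$-variable. Any such maximal inequality will again run into the same exponent obstacle as above unless you feed in more than second moments or exploit the two-sided increment structure.
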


\subsection{Proof of Proposition \ref{prop:ewswo}}
\label{sec:proof-cov}
By Theorem \ref{cl2conver}, we have:
$$
\rE[W_0W_s]=\lim_{\varepsilon\to 0+} \rE[\cl_\varepsilon\cl^s_\varepsilon].
$$
We turn to the calculation of $\rE[\cl_\varepsilon\cl^s_\varepsilon]$
with $\varepsilon$  small enough. We have:
\[
\rE[\cl_\varepsilon\cl^s_\varepsilon]
=\rE\left[\left(L_\varepsilon-Z_0\int_\varepsilon^\infty c(r) \;
    dr\right)\left(L_\varepsilon^s-
Z_s\int_\varepsilon^\infty c(r) \; dr\right)\right]
=B_1-B_2-B_3+B_4,
\]
with $B_1=\rE[L_\varepsilon L_\varepsilon^s]$,
\[
B_2=\rE[L_\varepsilon Z_s]\int_\varepsilon^\infty c(r) \; dr, \quad
B_3=\rE[L_\varepsilon^s Z_0]\int_\varepsilon^\infty c(r) \; dr,
\quad B_4=\left(\int_\varepsilon^\infty c(r) \; dr\right)^2\rE[Z_0 Z_s].
\]

We first compute $B_4$. Using \reff{eq:EZ0Zs} and \reff{eq:int-c}
we get:
\[
B_4=\frac{2+\embs}{2\beta^2\theta^2}\log\left(1- \emb{\varepsilon} \right)^2.
\]

We compute $B_2$. For $-r<0<s$, we have, using Proposition \ref{prop:ZfromM}:
\begin{align*}
\rE[M_{-r}Z_s]&=\rE[M_{-r}\rE[Z_s|\sigma(M_u, u\leq -r)]]\\
&=\rE\left[M_{-r}\left(\sum_{i=1}^{M_{-r}}\tilde Y^i_{s+r}+\sum_{-r<t_i<s}Y^i_{s-t_i}\right)\right]\\
&=\rE[M_{-r}^2]\N[Y_{s+r}|\zeta>r]+\rE[M_{-r}] 2\beta \int_{-r}^s \N[Y_{s-t}]\;dt.
\end{align*}
Since
\[
\N[Y_{s+r}|\zeta>r]=\frac{\N[Y_{s+r}\ind_{\{\zeta>r\}}]}{\N[\zeta>r]}
=\frac{\emb{(s+r)}}{c(r)},
\]
and by \reff{eq:EM} we have:
\[
\rE[M_{-r}Z_s]=\frac{2+\embs}{\theta(\eb{r}-1)}\cdot
\]
Hence, we have:
\[
B_2=\rE[L_\varepsilon Z_s]\int_\varepsilon^\infty c(r) \; dr
=\frac{2+\embs}{2\beta^2\theta^2}\left(\log\left(1- \emb{\varepsilon} \right)\right)^2,
\]
that is, $B_2=B_4$.

For $B_3$, we first compute $\rE[Z_0M_{s-q}^s]$.
By stationary, we have
$\rE[Z_0M_{s-q}^s]=\rE[Z_{-s}M_{-q}]$.
For $\varepsilon<q<s$, using  \reff{eq:EZ0Zs}, we  get:
\[
\rE[Z_{-s}M_{-q}]=c(q)\rE[Z_{-q}Z_{-s}]
=\frac{2+\eb{(r-s)}}{\theta(\eb{r}-1)}\cdot
\]
For $q>s$, a decomposition similar to the one used to compute $B_2$
gives:
\[
\rE[Z_{-s}M_{-q}]=\frac{\eb{s}+2}{\theta(\eb{q}-1)}
-\frac{(\eb {s} - 1)^2}{\theta(\eb{q} - 1)^2}\eb{(q - s)}\cdot
\]
This gives:
\begin{align*}
B_3
&=\int_\varepsilon^\infty c(r) \; dr
\int_\varepsilon^\infty dq\;\rE[Z_0M_{s-q}^s]\\
&=\int_\varepsilon^\infty \frac{c(r)}{\theta}dr\left[\int_\varepsilon^s dq \frac{\eb{(q-s)}+2}{(\eb{q}-1)}  + \int_{s}^\infty dq \left[\frac{\eb{s}+2}{(\eb{q}-1)}
-\frac{(\eb {s} - 1)^2}{(\eb{q} - 1)^2}\eb{(q - s)}\right]\right].
\end{align*}

For $B_1$, the integrand is computed in Lemma \ref{lem:m0ms}.
We have:
\begin{multline*}
B_1=2\int_\varepsilon^\infty dr\int_\varepsilon^{s+r} dq \frac{\eb{(q-s)}+2}{(\eb{r}-1)(\eb{q}-1)}\\
+2\int_\varepsilon^\infty dr\int^\infty_{s+r} dq \left[\frac{\eb{(r+s)}+2}{(\eb{r}-1)(\eb{q}-1)}
-\frac{\eb{q}(1-\embs)(\eb{(r+s)}-1)}{(\eb{r}-1)(\eb{q}-1)^2}\right].
\end{multline*}

Since $B_2=B_4$, this gives:
\begin{align*}
\rE[\cl_\varepsilon\cl^s_\varepsilon]=
B_1-B_3&=2\embs\int_\varepsilon^\infty dr \int_s^{s+r} dq \frac{\eb{q}}{(\eb{r}-1)(\eb{q}-1)}\\
&\hspace{1cm}
+2\ebs\int_\varepsilon^\infty dr \int_{s+r}^\infty dq
\frac{\eb{r}}{(\eb{r}-1)(\eb{q}-1)}\\
&\hspace{1cm}
-2\ebs\int_\varepsilon^\infty dr
\int_s^\infty dq \frac{1}{(\eb{q}-1)(\eb{r}-1)}\cdot
\end{align*}
Basic calculations yield:
\begin{multline*}
\rE[W_0W_s]=\frac{1}{2\beta^2\theta^2}
\left[\frac{\pi^2}{6}\embs +\ebs \Li_2(\embs)\right]\\
 -2(\ebs-\embs)\int_0^\infty dr \int_s^{s+r} \frac{dq}{(\eb{r}-1)(\eb{q}-1)}\cdot
\end{multline*}
The proof is then complete.

\subsection{Proof of Theorem \ref{theo:cadlag}}
\label{sec:proofcadlag}

   According to  Billingsley \cite[Theorem 3.16]{b:cpm} and thanks to the
   stationary property,  it is enough to check
   the following two conditions:
\begin{enumerate}
   \item[(i)] Right continuity in probability: for all $\lambda>0$:
\begin{equation}
   \label{eq:cond1}
\lim_{h \downarrow 0}
     \rP(|W_h -W_0|>\lambda)=0.
 \end{equation}
\item[(ii)] Control of the  jumps: there exists $\gamma>0$, $\delta>0$ such
  that for some constant $C>0$  and all $\lambda>0$, $s,t\in (0,1/8)$:
\begin{equation}
   \label{eq:cond2}
\rP(|W_{-t} -W_0| \wedge |W_s -W_0|\geq 6\lambda)
\leq  C\lambda ^{-4\gamma} (s+t)^{1+\delta}.
 \end{equation}
\end{enumerate}

Notice that Proposition \ref{prop:ewswo} (see also Remark \ref{rem:cov})
implies the $L^2$-continuity of
$W$. This in turn implies \reff{eq:cond1} and thus (i) is satisfied.

We shall now focus on (ii) and \reff{eq:cond2}.
In this Section $C$ denotes any finite positive constants which may
vary from line to line.
For notational convenience, we
shall write for $\varepsilon>0$, $s\in \R$:
\[
\int_0^\varepsilon dr\; (M_{s-r}^s - c(r) Z_s)= W_s - \cl_\varepsilon^s.
\]

We define for $h>|u|>0$:
\begin{align*}
   A_1(u,h)&=-\int_h^\infty (M_{-r} -M^u_{-r})dr,
\\
A_2(u,h)&=Z_0 \int_{h}^\infty c(r)\,
dr-Z_u \int_{h+u}^\infty  c(r) \, dr, \\
A_3(u,h)&=-\int_0^h (M_{-r} -c(r) Z_0)\; dr,\\
A_4(u,h)&=\int_0^{h+u} (M_{u-r}^u -c(r) Z_u)\; dr.
\end{align*}
For $s>0$, $t>0$ and $h>s+t$, we have:
\begin{equation}
   \label{eq:Ws-W0}
W_s-W_0=\sum_{i=1}^4 A_i(s,h)\quad \text{and}\quad
W_0-W_{-t}=-\sum_{i=1}^4 A_i(-t,h).
\end{equation}

In a first step we give upper bounds for the probability of $A_i$ to be
large in the following Lemmas.
\begin{lem}
   \label{lem:A1}
   There exists  a finite  constant $C_1$  such that for  all $s,  t \in
   (0,1/8)$, $h>2(s+t)>0$ and $\lambda>0$, we have:
\[
\rP(|A_1(s,h)|\wedge |A_1(-t,h)|>\lambda)\leq  C_1  \frac{(s+t)^2}{h^4}\cdot
\]
\end{lem}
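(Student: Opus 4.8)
The quantity $A_1(u,h) = -\int_h^\infty (M_{-r} - M^u_{-r})\, dr$ measures the discrepancy between the number of ancestors of the population at time $0$ and the number of ancestors of the population at time $u$, cut off below level $h$. The natural strategy is to bound $\rP(|A_1(s,h)| \wedge |A_1(-t,h)| > \lambda)$ by $\lambda^{-4}\, \rE[A_1(s,h)^2\, A_1(-t,h)^2]$ via Markov's inequality (a fourth-moment bound being what is eventually needed for Billingsley's criterion \reff{eq:cond2}, though here a second-moment-type estimate on each factor, combined with independence-type structure, may suffice). So the first step is to identify, for $u = s > 0$, which ancestors of $Z_0$ at a level $-r < -h$ are \emph{not} ancestors of $Z_s$, and vice versa: these are exactly the sub-populations born from the immortal spine (or from one of the $Y^i$) on the time interval $(-r, \cdot)$ that survive to time $0$ but whose contribution to the two populations differs only through births occurring in the window of width $|u|$ near the present. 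Thus $M_{-r} - M^s_{-r}$ should be expressible, via the reconstruction of Proposition \ref{prop:ZfromM} and the Williams-type decomposition used in Lemma \ref{lem:mf=mb}, as a sum of Poisson contributions whose intensity is $O(s)$ uniformly in $r > h$.

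Concretely, I would write $M_{-r} = M^s_{-r} + (\text{ancestors at } -r \text{ of } Z_0 \text{ that die before time } s) - (\text{extra ancestors created in } (0,s) \text{ that survive to } s \text{ but trace back to level } -r)$, using the branching property to make the two correction terms conditionally Poisson. The key computation is then an estimate of the form
\[
\rE\big[(M_{-r} - M^s_{-r})^2\big] \le C\, \frac{s}{r^2}
\quad\text{and}\quad
\rE\big[|M_{-r} - M^s_{-r}|\big] \le C\, \frac{s}{r^2}
\]
for $r > h > 2s$, obtained from \reff{eq:EM}, \reff{eq:nyt}, \reff{eq:NeYzeta}, the asymptotics $c(r) \sim 1/(\beta r)$ and $|c'(r)| \sim 1/(\beta r^2)$ as $r \to 0+$ (here $r$ is bounded below by $h$, but the relevant smallness is $s/r \le 1/2$), together with the bound $\rE[Z_0^2] = 3/(2\theta^2)$ from \reff{eq:momentz}. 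Integrating in $r$ over $(h,\infty)$ gives $\rE[A_1(s,h)^2] \le C s^2/h^3$ or, after squaring and using Cauchy--Schwarz on the double integral, the cleaner $\rE[A_1(s,h)^2] \le C (s/h)^2 \cdot h \cdot h^{-1} = C s^2 h^{-?}$; the exact exponent has to be tracked, but the target $(s+t)^2/h^4$ dictates that one factor of $A_1(s,h)^2$ must contribute $s^2/h^{?}$ and the pair together $h^{-4}$.

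The main obstacle I expect is handling the \emph{joint} law of $A_1(s,h)$ and $A_1(-t,h)$: these are not independent, since both are functionals of the same ancestral structure below level $-h$. The clean way around this is to observe that $A_1(s,h)$ depends only on the population born (from the spine or the $Y^i$) \emph{after} time $0$ — i.e. on the piece of the Poisson point measure $\cn$ over $(0,s)$ — together with the ancestors $M_{-r}$ themselves, while $A_1(-t,h)$ depends on the piece over $(-t, 0)$. Conditionally on $(M_t, t \le -h)$ (equivalently on the relevant sigma-field below level $-h$), these two pieces of $\cn$ are independent, so $\rE[A_1(s,h)^2 A_1(-t,h)^2] = \rE\big[\rE[A_1(s,h)^2 \mid \cdot]\, \rE[A_1(-t,h)^2 \mid \cdot]\big]$, and each conditional expectation is bounded by $C s^2/h^{?}$ and $C t^2/h^{?}$ respectively by the single-variable estimate above, giving $\rE[A_1(s,h)^2 A_1(-t,h)^2] \le C s^2 t^2 / h^{?} \le C (s+t)^4/h^{?}$. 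Then Markov's inequality at order four yields $\rP(|A_1(s,h)| \wedge |A_1(-t,h)| > \lambda) \le \lambda^{-4}\, C (s+t)^4/h^{?}$; since $s, t < 1/8$ and $h > 2(s+t)$ one trades surplus powers of $(s+t)$ for the required $h^{-4}$ to reach the stated bound $C_1 (s+t)^2/h^4$. Verifying the bookkeeping of exponents — making sure the moment estimates are strong enough that no power of $h$ is lost and the final exponent on $(s+t)$ is exactly $2$ — is the delicate part; everything else is a routine application of the exponential formula for Poisson point measures and the explicit formulas \reff{eq:EM}–\reff{eq:yrys0}.
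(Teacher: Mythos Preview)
Your approach has a genuine gap: the bound stated in the Lemma is \emph{independent of $\lambda$}, so any Markov/Chebyshev argument of the form
\[
\rP(|A_1(s,h)|\wedge|A_1(-t,h)|>\lambda)\le \lambda^{-4}\,\rE\big[A_1(s,h)^2A_1(-t,h)^2\big]
\]
cannot possibly yield it --- the right-hand side blows up as $\lambda\downarrow 0$, whereas the target $C_1(s+t)^2/h^4$ does not. No amount of bookkeeping on the exponents of $s,t,h$ will repair this; the moment bound simply lives in the wrong family of estimates. (Your conditional-independence claim is also doubtful: $M_{-r}-M_{-r}^s$ records which lineages alive at time $0$ die out by time $s$, while $M_{-r}^{-t}-M_{-r}$ records which lineages alive at time $-t$ die out by time $0$; both are functionals of the full tree and are not obviously independent given $(M_v,\,v\le -h)$.)

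The paper's argument bypasses moments entirely by a discrete-event reduction. Since $u\mapsto M_{-r}^u$ is non-increasing for $u>-r$, one has $M_{-r}^s\le M_{-r}\le M_{-r}^{-t}$ for all $r\ge h$; moreover, if $M_{-h}=M_{-h}^u$ then the \emph{sets} of ancestors coincide at level $-h$ and hence at every level $-r\le -h$, forcing $A_1(u,h)=0$. Thus
\[
\rP(|A_1(s,h)|\wedge|A_1(-t,h)|>\lambda)\le \rP\big(M_{-h}\neq M_{-h}^s,\ M_{-h}\neq M_{-h}^{-t}\big),
\]
which is already $\lambda$-free. By stationarity and the binomial structure of $(M_0^r)_{r>0}$ (Corollary~\ref{cor:binR} and \reff{eq:lapm}), $\rP(M_0^r=M_0^t)=(1+(c(t)-c(r))/(2\theta))^{-2}$ for $r>t>0$; an inclusion--exclusion using the monotonicity $M_{-h}^s\le M_{-h}\le M_{-h}^{-t}$ then gives
\[
\rP\big(M_{-h}\neq M_{-h}^s,\ M_{-h}\neq M_{-h}^{-t}\big)
=1-\frac{1}{(1+x)^2}-\frac{1}{(1+y)^2}+\frac{1}{(1+x+y)^2}\le 6xy,
\]
with $x=(c(h-t)-c(h))/(2\theta)$ and $y=(c(h)-c(h+s))/(2\theta)$. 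The elementary estimates $x\le Ct/h^2$, $y\le Cs/h^2$ (valid for $h>2(s+t)$, $s,t\in(0,1/8)$) finish the proof. The idea you are missing is precisely this first step: recognise that $A_1(u,h)\neq 0$ is a coalescence event at level $-h$, not a quantity to be controlled in $L^p$.
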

\begin{proof}
Notice that $A_1(u,h)\neq 0$ implies  $|M_{-h} - M_{-h}^u|\geq
1$. Therefore, we have:
\begin{multline}
   \label{eq:majoA1A1}
   \rP(|A_1(s,h)|\wedge |A_1(-t,h)|>\lambda)\\
\begin{aligned}
&\leq
\rP( M_{-h} - M_{-h}^s\neq 0,\,  M_{-h} - M_{-h}^{-t} \neq 0)\\
&=1 - \rP( M_{-h} = M_{-h}^s) - \rP(  M_{-h} = M_{-h}^{-t} )
+\rP( M_{-h} = M_{-h}^s,\,  M_{-h} = M_{-h}^{-t} )\\
&=1 - \rP( M_{-h} = M_{-h}^s) - \rP(  M_{-h} = M_{-h}^{-t} )
+\rP(M_{-h}^s= M_{-h}^{-t} ),
\end{aligned}
\end{multline}
where we used for the last equality that the sequence $(M_{-h}^u, u>
-h)$ is non-increasing.

Let $r> t>0$. According to representation \reff{eq:repMr}, we have that $M_0^r$  is,   conditionally  on  $M_0^t$,  binomial  with
parameter $\left(M_0^t,\frac{c(r)}{c(t)}\right)$.  This implies:
\[
\rP(M_0^r=M_0^t)=\rE\left[\left(\frac{c(r)}{c(t)}\right)^{M_0^t}
\right] = \left(1+ \frac{c(t) -c(r)}{2\theta}\right)^{-2},
\]
where we used \reff{eq:lapm}
for the last equality. By stationarity, we deduce from
\reff{eq:majoA1A1} that:
\begin{align*}
 \rP(|A_1(s,h)|\wedge |A_1(-t,h)|>\lambda)
&   \leq 1 - \inv{(1+x)^2}
- \inv{(1+y)^2} + \inv{(1+x+y)^2} \\
& = \int_0^x dv \int_0^y  dz \, \frac{6}{(1+v+z)^4}\\
& \leq  6 xy,
\end{align*}
with
\[
x=\frac{c(h-t) -c(h)}{2\theta}\quad\text{and}\quad  y=\frac{c(h) -c(h+s)}{2\theta}\cdot
\]
Notice that:
\[
(1- \expp{-2\beta \theta h}) (\expp{2\beta \theta(h+s)} -1)\geq
\expp{2\beta \theta h} +\expp{-2\beta \theta h} -2 \geq
(2\beta \theta)^2 h^2.
\]
Since for $s\in (0,1/4)$, we have $\expp{ 2\beta \theta s} -1 \leq  Cs$,
we deduce that $y\leq Cs /h^2$. Similarly, and using $h-t\geq h/2$, we
also get $x\leq  Ct /h^2$. This implies:
\[
\rP(|A_1(s,h)|\wedge |A_1(-t,h)|>\lambda)\leq C \frac{st}{h^4} \leq  C
\frac{(s+t)^2}{h^4}\cdot
\]
\end{proof}

\begin{lem}
   \label{lem:A2}
 There exists a finite constant $C_2$ such that for all
$h>2|u|>0$, with $u\in [-1/8, 1/8]$, $\lambda>0$, we have:
\[
\rP(|A_2(u,h)|>\lambda)\leq  C_2 \frac{u^2}{\lambda^{4}h^4}\cdot
\]
\end{lem}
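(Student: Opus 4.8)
The plan is to control the fourth moment of $A_2(u,h)$ and then apply Markov's inequality, $\rP(|A_2(u,h)|>\lambda)\le\lambda^{-4}\rE[A_2(u,h)^4]$, so that the Lemma reduces to the bound $\rE[A_2(u,h)^4]\le C\,u^2/h^4$. Write $a(h)=\int_h^\infty c(r)\,dr=-\beta^{-1}\log(1-\expp{-2\beta\theta h})$, the last equality by \reff{eq:int-c}, so that $A_2(u,h)=Z_0\,a(h)-Z_u\,a(h+u)$. Two elementary deterministic estimates do the geometric work. First, $a(h)\le C/h$ for every $h>0$: as $h\downarrow0$ one has $a(h)\sim\beta^{-1}\log(1/h)=o(1/h)$, while $a(h)$ decays exponentially as $h\to\infty$, so $\sup_{h>0}h\,a(h)<\infty$. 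Second, since $c$ is non-increasing and $h>2|u|$, so that $h-|u|>h/2$,
\[
|a(h)-a(h+u)|=\Big|\int_h^{h+u}c(r)\,dr\Big|\le|u|\,c(h/2)=\frac{2\theta|u|}{\expp{\beta\theta h}-1}\le\frac{2|u|}{\beta h}\cdot
\]

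Next I would use the decomposition
\[
A_2(u,h)=(Z_0-Z_u)\,a(h)+Z_u\,\big(a(h)-a(h+u)\big),
\]
which, together with $(x+y)^4\le8(x^4+y^4)$, gives
\[
\rE\big[A_2(u,h)^4\big]\le8\,a(h)^4\,\rE\big[(Z_0-Z_u)^4\big]+8\,|a(h)-a(h+u)|^4\,\rE\big[Z_u^4\big].
\]
By \reff{eq:exp-Z} the random variable $Z_0$ has a Gamma law, hence finite moments of all orders, and by stationarity $\rE[Z_u^4]=\rE[Z_0^4]<\infty$; combined with the estimate on $|a(h)-a(h+u)|$ this makes the second term at most $C\,u^4/h^4\le C\,u^2/h^4$, since $|u|\le1$. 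For the first term, $a(h)^4\le C/h^4$, so the whole estimate reduces to proving
\[
\rE\big[(Z_0-Z_u)^4\big]\le C\,u^2\qquad\text{for }|u|\le1/8;
\]
by stationarity this quantity depends only on $|u|$, so we may take $u>0$.

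This last bound is the heart of the matter, and it is exactly here that the \emph{continuity} of the paths of $Z$ is used: for a branching population with jumps one would get only $O(|u|)$ in place of $O(u^2)$, and the Lemma as stated would fail. It is classical that the stationary process $Z$ is a Feller diffusion with immigration, i.e. the (CIR-type) solution of $dZ_t=(2\beta-2\beta\theta Z_t)\,dt+\sqrt{2\beta Z_t}\,dB_t$ for a standard Brownian motion $B$ (see \cite{cd:spsmrcatsbp}). Thus $Z_u-Z_0=\int_0^u(2\beta-2\beta\theta Z_r)\,dr+\int_0^u\sqrt{2\beta Z_r}\,dB_r$, and by the Burkholder--Davis--Gundy inequality and Cauchy--Schwarz,
\[
\rE\Big[\Big(\int_0^u\sqrt{2\beta Z_r}\,dB_r\Big)^4\Big]\le C\,\rE\Big[\Big(\int_0^u Z_r\,dr\Big)^2\Big]\le C\,u\int_0^u\rE[Z_r^2]\,dr=C\,u^2,
\]
using $\rE[Z_r^2]=3/(2\theta^2)$ from \reff{eq:momentz}, while Jensen's inequality gives $\rE[(\int_0^u(2\beta-2\beta\theta Z_r)\,dr)^4]\le C\,u^4$; adding these yields $\rE[(Z_0-Z_u)^4]\le C\,u^2$. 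Plugging back, $\rE[A_2(u,h)^4]\le C\,u^2/h^4$, which is the claim. One can avoid stochastic calculus altogether and obtain the same bound by computing the joint moments of $(Z_0,Z_u)$ up to order four directly from the branching property, along the lines of the derivation of \reff{eq:EZ0Zs}; this is routine but longer. The only genuine obstacle in the argument is securing the square $u^2$, rather than the first power $|u|$, in the estimate on $\rE[(Z_0-Z_u)^4]$ — that is, really exploiting that the quadratic branching mechanism produces a diffusion with continuous paths.
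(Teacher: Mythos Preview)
Your proof is correct and follows essentially the same route as the paper: the same algebraic splitting of $A_2(u,h)$ into a $(Z_0-Z_u)$-piece times an integral of $c$ and a $Z$-piece times the difference of integrals, the same deterministic bounds $a(h)\le C/h$ and $|a(h)-a(h+u)|\le C|u|/h$, and the same key input $\rE[(Z_0-Z_u)^4]\le Cu^2$ coming from the fact that $Z$ is a Feller diffusion, followed by a fourth-moment Chebyshev/Markov bound. The only cosmetic differences are that the paper bounds $\rP(|A_{2,1}|>\lambda/2)$ and $\rP(|A_{2,2}|>\lambda/2)$ separately rather than the fourth moment of the sum, and that you spell out the BDG argument for $\rE[(Z_0-Z_u)^4]\le Cu^2$ whereas the paper simply cites the diffusion property of $Z$ from \cite{cd:spsmrcatsbp}.
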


\begin{proof}
We write $A_2(u,h)=A_{2,1} + A_{2,2}$
with
\[
A_{2,1}= (Z_0-Z_u) \int _{u+h}^{+\infty } c(r)\, dr =
\frac{Z_u -Z_0}{\beta} \log(1-\expp{-2\beta\theta(h+u)})
\quad\text{and}\quad
A_{2,2}= Z_0 \int_h^{h+u} \!\! c(r)\, dr.
\]
We have:
\begin{equation}
   \label{eq:inegA2}
\rP(|A_2(u,h)|>\lambda)\leq \rP(|A_{2,1}|>\lambda/2)+
\rP(|A_{2,2}|>\lambda/2).
\end{equation}

Tchebychev's inequality gives:
\[
\rP(|A_{2,1}|>\lambda/2)\leq \frac{2^4}{(\lambda\beta)^{4}} \log(1-\expp{-2\beta\theta(h+u)})^{4}
\rE\left[|Z_u-Z_0|^{4}\right].
\]
Since $Z$ is a Feller diffusion, see Section 7.1 in
\cite{cd:spsmrcatsbp}, we have $\rE\left[|Z_u-Z_0|^{4}\right] \leq C
u^2$.
For $x>0$, we have $0\leq  -\log(1-\expp{-x})\leq  1/x$. Using that
$h>2|u|$, we get:
\begin{equation}
   \label{eq:inegA21}
\rP(|A_{2,1}|>\lambda/2)\leq C \frac{u^2}{\lambda^4(h+u)^4} \leq  C
\frac{u^2}{\lambda^4 h^4}.
\end{equation}

Since $c$ is decreasing and $h>2|u|$, we have:
\[
\int_h^{h+u} \!\! c(r)\, dr\leq  |u| c(h-|u|)\leq C \frac{|u|}{h}\leq C
\frac{\sqrt{|u|}}{h}\cdot
\]
Then, using Tchebychev's inequality, we get:
\begin{equation}
   \label{eq:inegA22}
\rP(|A_{2,1}|>\lambda/2)\leq C \frac{u^2}{\lambda^{4} h^4} \cdot
\end{equation}

Then use \reff{eq:inegA2} with \reff{eq:inegA21} and \reff{eq:inegA22}
to conclude.
\end{proof}

\begin{lem}
   \label{lem:A3}
 Let $p\in \N^*$. There exists a finite constant $C_3$ such that for all  $h>0$ and $\lambda>0$, we have:
\[
\rP(|A_3(u,h)|>\lambda)\leq  C_3\frac{h^p}{\lambda^{2p}}\cdot
\]
\end{lem}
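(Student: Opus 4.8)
Notice first that $A_3(u,h)=-\int_0^h(M_{-r}-c(r)Z_0)\,dr$ does not depend on $u$, and, unwinding \reff{eq:defcl} and \reff{defL} (for $\varepsilon<h$ one has $\cl_\varepsilon-\cl_h=\int_\varepsilon^h(M_{-r}-c(r)Z_0)\,dr$, then let $\varepsilon\to0$), that $A_3(u,h)=\cl_h-W_0$. Hence by Markov's inequality applied with the exponent $2p$,
\[
\rP(|A_3(u,h)|>\lambda)\le \lambda^{-2p}\,\rE\big[(\cl_h-W_0)^{2p}\big],
\]
so it is enough to prove $\rE[(\cl_h-W_0)^{2p}]\le C h^p$ for some constant $C$ depending only on $p$ (and $\beta,\theta$).

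\textbf{Poissonian representation.} To estimate this moment I would use the time-reversal representation of Remark \ref{rem:tr-totl}, see \reff{eq:=timer}: jointly with $Z_0$, the family $(\cl_\varepsilon,\varepsilon>0)$ (hence also $W_0$ and $\cl_h$) has the law of its tilde-version built from a Poisson point measure $\sum_j\delta_{x_j,\zeta_j}$ on $(0,\infty)^2$ with intensity $dx\,|c'(t)|\,dt$, independent of $Z_0$. Using $(\zeta-\varepsilon)_+-(\zeta-h)_+=(\zeta\wedge h-\varepsilon)_+$ and letting $\varepsilon\to0$, one gets
\[
\cl_h-W_0 \stackrel{\text{(d)}}{=} -\Big(\sum_{x_j<Z_0}(\zeta_j\wedge h)-Z_0\,\N[\zeta\wedge h]\Big),
\]
which, although each of the two terms is infinite (recall $\N[\zeta\wedge h]=\int_0^h c(s)\,ds=\infty$), makes sense as a compensated (centered) Poisson integral since $\N[(\zeta\wedge h)^2]=\int_0^h 2t\,c(t)\,dt<\infty$; this is exactly the $L^2$ convergence already recorded in Theorem \ref{cl2conver}. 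Thus, conditionally on $Z_0=z/(2\theta)$, $\cl_h-W_0$ is distributed as $-X$, where $X$ is the compensated integral of $t\mapsto t\wedge h$ against a Poisson random measure of intensity $\nu(dt)=\tfrac{z}{2\theta}|c'(t)|\,dt=\tfrac{z}{2\theta}\N[d\zeta]$.

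\textbf{Moment estimate.} Then I would invoke the standard moment formula for compensated Poisson integrals: $X$ has finite moments of all orders, with cumulants $\kappa_1(X)=0$ and $\kappa_k(X)=\nu\big((\cdot\wedge h)^k\big)$ for $k\ge2$, so that $\rE[X^{2p}]$ is a finite linear combination, with combinatorial coefficients, of products $\prod_{i=1}^m\kappa_{k_i}(X)$ over partitions $k_1+\dots+k_m=2p$ with all $k_i\ge2$. Using $\N[\zeta>t]=c(t)$ together with the elementary bound $c(t)=2\theta/(\expp{2\beta\theta t}-1)\le 1/(\beta t)$, one obtains for $k\ge2$
\[
\kappa_k(X)=\frac{z}{2\theta}\,\N[(\zeta\wedge h)^k]=\frac{z}{2\theta}\int_0^h k\,t^{k-1}c(t)\,dt\le C_k\,z\,h^{k-1},
\]
and also $\kappa_k(X)\le \tfrac{z}{2\theta}\N[\zeta^k]\le \tilde C_k\,z$ since $\N[\zeta^k]<\infty$ for $k\ge2$. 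For $h\le1$ each partition term is bounded by $C\,z^m h^{2p-m}\le C\,z^m h^p$ because $m\le p$ (all parts are $\ge2$), whence $\rE[X^{2p}\mid Z_0]\le C_p\,h^p(1+Z_0^p)$; for $h>1$ the second bound gives $\rE[X^{2p}\mid Z_0]\le C_p(1+Z_0^p)\le C_p(1+Z_0^p)h^p$. Taking expectations and using $\rE[Z_0^p]<\infty$ (as $Z_0$ is a sum of two exponential variables, see \reff{eq:exp-Z}) yields $\rE[(\cl_h-W_0)^{2p}]\le C h^p$ for all $h>0$, and the claim follows.

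\textbf{Remarks on the difficulty.} The cumulant/partition bookkeeping and the integral estimates are routine; the only point needing care is that, since $\N[\zeta\wedge h]=\infty$, $\cl_h-W_0$ must be handled as a genuinely compensated Poisson integral rather than a difference of finite quantities, which however requires no new input beyond Theorem \ref{cl2conver}. The sole (mild) obstacle is keeping the constants uniform in $h$, dealt with by using the $h^{k-1}$ bound for $h\le1$ and the $\N[\zeta^k]$ bound for $h>1$.
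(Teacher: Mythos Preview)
Your proof is correct, but it takes a genuinely different route from the paper's.

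The paper's argument is shorter and more elementary: it applies Minkowski's integral inequality to pull the $L^{2p}$ norm inside the time integral,
\[
\rE\!\left[A_3(u,h)^{2p}\right]^{1/2p}\le \int_0^h \rE\!\left[(M_{-r}-c(r)Z_0)^{2p}\right]^{1/2p}dr,
\]
then uses Lemma~\ref{lem:MZ} (so $M_{-r}$ is conditionally Poisson with mean $c(r)Z_0$) together with the centered Poisson moment bound $\rE[(X-m)^{2p}]\le C(m^p+m)$ to get an integrand $\le C(\sqrt{c(r)}+c(r)^{1/2p})$, whose integral over $(0,h)$ is $\le C\sqrt{h}$. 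This avoids any global Poisson representation, cumulant bookkeeping, or case distinction in $h$.

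Your approach instead identifies $A_3(u,h)=\cl_h-W_0$ and uses the time-reversal representation \reff{eq:=timer} to realize it, conditionally on $Z_0$, as a compensated Poisson integral of $\zeta\wedge h$, then controls moments via the cumulant--partition formula. This is heavier machinery (you need Theorem~\ref{cl2conver} and Remark~\ref{rem:tr-totl}, plus the cumulant argument and the split $h\le1$ versus $h>1$) but it gives more structural insight: it makes explicit that the object is a L\'evy-type increment with cumulants $\propto Z_0\,\N[(\zeta\wedge h)^k]$, which is exactly the mechanism behind Proposition~\ref{prop:fluctW}. Both methods ultimately rest on the same conditional-Poisson fact from Lemma~\ref{lem:MZ}; the paper exploits it pointwise in $r$, while you exploit it globally.
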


\begin{proof}
Using  $\rE\left[\prod_{k=1}^{2p}|X_k|\right]\leq  \prod _{k=1}^{2p}
\rE \left[X_k^{2p}\right]^{1/2p}$, we get:
\[
\rE\left[A_3(u,h)^{2p}\right]
\leq
\left(\int_0^h
 dr\,
\rE \left[(M_{-r} -c(r) Z_0)^{2p}\right]^{1/2p}
\right)^{2p}.
\]
For a Poisson random variable $X$ with mean $m$, we have:
\[
\rE\left[(X-m)^{2p}\right] \leq  C' (m^p+m),
\]
where the constant $C'$ doesn't depend on $m$.
Thanks to Lemma \ref{lem:MZ}, we have that $M_{-r}$ is, conditionally on
$Z_0$, a Poisson random variable with mean $c(r) Z_0$. This implies that:
\[
\rE\left[(M_{-r} -c(r) Z_0)^{2p}\right]^{1/2p}\leq C (\sqrt{c(r)}+c(r)^{1/2p}) .
\]
We deduce that:
\[
\int_0^h
 dr\,
\rE \left[(M_{-r} -c(r) Z_0)^{2p}\right]^{1/2p}\leq  C (\sqrt{h} \wedge
1) \leq  C \sqrt{h}.
\]
Therefore, we have $\rE\left[A_3(u,h)^{2p}\right]\leq  Ch^p$ and we
conclude using Tchebychev's inequality.
\end{proof}

\begin{lem}
   \label{lem:A4}
 Let $p\in \N^*$. There exists a finite constant $C_4$ such that for all
 $h>2|u|$ and $\lambda>0$, we have:
\[
\rP(|A_4(u,h)|>\lambda)\leq  C_4\frac{h^p}{\lambda^{2p}}\cdot
\]
\end{lem}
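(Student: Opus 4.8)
The plan is to reduce Lemma~\ref{lem:A4} to Lemma~\ref{lem:A3} using the stationarity of the construction. Recall that the Poisson point measure $\cn$ in \reff{eq:defcn} has a law invariant under time-shifts, and that $M_{u-r}^u$ is defined in \reff{def:mrt} purely in terms of the atoms $(t_i,Y^i)$ with $t_i<u$, i.e. of the shifted point measure. Hence for every fixed $u\in\R$ the pair $\left(Z_u,(M_{u-r}^u,r>0)\right)$ has the same distribution as $\left(Z_0,(M_{-r},r>0)\right)$. Consequently the random variable
\[
A_4(u,h)=\int_0^{h+u}\left(M_{u-r}^u-c(r)Z_u\right)dr
\]
has the same distribution as $\int_0^{h+u}\left(M_{-r}-c(r)Z_0\right)dr=-A_3(u,h+u)$, where the first argument of $A_3$ is irrelevant. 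Taking absolute values, $|A_4(u,h)|$ and $|A_3(u,h+u)|$ have the same law.

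It then suffices to invoke Lemma~\ref{lem:A3} at time-horizon $h+u$: for every $p\in\N^*$,
\[
\rP\left(|A_4(u,h)|>\lambda\right)=\rP\left(|A_3(u,h+u)|>\lambda\right)\leq C_3\,\frac{(h+u)^p}{\lambda^{2p}}\cdot
\]
Since $h>2|u|$ we have $h+u>h/2>0$ and $h+u\leq h+|u|<3h/2$, so $(h+u)^p<(3/2)^p h^p$, and the claim follows with $C_4=(3/2)^pC_3$.

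I do not expect any genuine obstacle here; the only point requiring a word of justification is the joint stationarity identity $\left(Z_u,(M_{u-r}^u,r>0)\right)\stackrel{(\mathrm{d})}{=}\left(Z_0,(M_{-r},r>0)\right)$, which is immediate from the shift-invariance of $\cn$. Alternatively, if one prefers to avoid this shortcut, the bound can be obtained by copying the proof of Lemma~\ref{lem:A3} verbatim: bound $\rE[A_4(u,h)^{2p}]$ by $\bigl(\int_0^{h+u}\rE[(M_{u-r}^u-c(r)Z_u)^{2p}]^{1/2p}\,dr\bigr)^{2p}$, use that conditionally on $Z_u$ the variable $M_{u-r}^u$ is Poisson with mean $c(r)Z_u$ (by stationarity and Lemma~\ref{lem:MZ}) together with $\rE[(X-m)^{2p}]\leq C'(m^p+m)$ for $X\sim$ Poisson$(m)$, and conclude via Tchebychev's inequality after noting $\int_0^{h+u}(\sqrt{c(r)}+c(r)^{1/2p})\,dr\leq C\sqrt{h+u}\leq C\sqrt{h}$.
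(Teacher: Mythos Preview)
Your proof is correct and follows exactly the paper's approach: the paper simply notes that by stationarity $A_4(u,h)$ is distributed as $-A_3(u,h+u)$ and invokes Lemma~\ref{lem:A3}. You have spelled out the stationarity argument and the bound $(h+u)^p\leq(3/2)^p h^p$ that the paper leaves implicit, but the route is identical.
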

\begin{proof}
By stationarity, we have that  $A_4(u, h)$ is distributed as
$-A_3(u,h+u)$. Then use Lemma \ref{lem:A3} to conclude.
\end{proof}

We  complete   the  proof   of  Theorem  \ref{theo:cadlag}   by  proving
\reff{eq:cond2}.  Using:
\[
\{ |x+y| \wedge |x'+y'|>6\lambda\}
\subset
\{|x|\wedge |x'|>3\lambda\} \bigcup \{|y|> 3\lambda\} \bigcup \{|y'|>3\lambda\} ,
\]
we get:
\begin{multline}
   \label{eq:majoPAB}
\rP(|W_{-t} -W_0| \wedge |W_s -W_0|\geq 6 \lambda)
\\\leq  \rP(|A_1(s,h)|\wedge |A_1(-t,h)|>3\lambda)
+ \sum_{i=2}^4 \rP(|A_i(s,h)|>\lambda)
+ \sum_{i=2}^4 \rP(|A_i(-t,h)|>\lambda).
\end{multline}

 Let  $\delta\in   (0,1/3)$,  $2>\gamma >
3(1+\delta)/2$ and $p\in \N^*$ such that $2p/(p+4)>  \gamma$. Notice
that $(1+\delta)/2\gamma< 1/3$.
Set:
\[
x=\frac{\lambda^{4\gamma}}{(s+t)^{1+\delta}} \quad \text{and}\quad
h=(s+t)^{(1+\delta)/2\gamma} \, x^{1/4}.
\]

If $x<1$, then we have that \reff{eq:cond2} holds trivially with
$C=1$. So we shall assume that $x\geq 1$. For $s,t\in (0, 1/8)$,
we have:
\[
h\geq (s+t)^{(1+\delta)/2\gamma}\geq (s+t)^{1/3}> 2 (s+t).
\]
So hypothesis of the previous Lemmas are satisfied for $s,t\in (0,
1/8)$. Using $\gamma-(1+\delta)>0$,
 Lemma \ref{lem:A1} implies:
\[
\rP(|A_1(s,h)|\wedge |A_1(-t,h)|>3 \lambda)
\leq  C_1 \frac{(s+t)^2}{h^4}
=  C_1 \frac{(s+t)^{\frac{2}{\gamma}(\gamma-(1+\delta))}}{x} \leq \frac{C_1}{x}\cdot
\]
For $u\in \{-t, s\}$, using
$2\gamma- 3(1+\delta)>0$, Lemma \ref{lem:A2} implies:
\[
\rP(|A_2(u,h)|> \lambda)
\leq  C_2 \frac{u^2}{\lambda^4 h^4} \leq   C_2 \frac{(s+t)^2}{\lambda^4
  h^4}
= C_2\frac{(s+t)^{2 - 3(1+\delta)/\gamma} }{x^{1+1/\gamma}}
\leq  \frac{C_2}{x}\cdot
\]
For $u\in \{-t, s\}$, using  $p(2-\gamma) \geq 8p/(p+4) >4\gamma$
together with Lemma \ref{lem:A3} resp. Lemma \ref{lem:A4}, we get:
\[
\rP(|A_3(u,h)|> \lambda)
\leq  C_3 \frac{h^p}{\lambda^{2p}}
=  \frac{C_3}{x^{\frac{p}{4\gamma}(2 -\gamma)} }
\leq  \frac{C_3}{x} \cdot
\]
resp. for $u\in \{-t, s\}$:
\[
\rP(|A_4(u,h)|> \lambda)
\leq  C_4 \frac{h^p}{\lambda^{2p}}
\leq  \frac{C_4}{x} \cdot
\]
We deduce that for $s,t\in (0, 1/8)$, and $x\geq 1$, we have:
\[
\rP(|W_{-t} -W_0| \wedge |W_s -W_0|\geq 6 \lambda)
\leq  \frac{C}{x}\cdot
\]
This ends the proof of \reff{eq:cond2} and thus (ii).

\section{Appendix}

\subsection{Functionals of the number of ancestors for the process $Y$}
\label{sec:app-R}

We have  the following results.

\begin{lem}
   \label{lem:Rab0}
Let $\lambda,  v, q\in (0,+\infty )$. We have:
\begin{equation}
   \label{eq:nexpR}
\N\left[1- \expp{-\lambda R_v^{v+q}}\right]= u(c(q)(1-\expp{-\lambda}),v),
\end{equation}
\begin{equation}
\label{eq:nrabh}
\N[R_v^{v+q}]=c(q) \expp{-2\beta \theta v}, \quad \text{and} \quad
\N[R_v^{v+q}|\zeta>v+q]=\frac{c(q)}{c(v+q)}  \expp{-2\beta \theta v}.
\end{equation}
\end{lem}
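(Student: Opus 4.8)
The plan is to reduce everything to the Poisson structure of the excursions of $H$ above a fixed level, recalled in Section~\ref{sec:genY}. First I would condition on $(Y_r, r\in[0,v])$: since $\sum_{k\in\ck_v}\delta_{H_k}(dH)$ is then, under $\N$, a Poisson point measure with intensity $Y_v\,\N[dH]$ and $\N[\max(H)\ge q]=c(q)$, the variable $R_v^{v+q}$ is, conditionally on $Y_v$, a Poisson random variable with parameter $c(q)\,Y_v$. In particular $R_v^{v+q}=0$ on $\{Y_v=0\}$, so that $\N$-a.e.\ one has $\{R_v^{v+q}\ge 1\}\subset\{Y_v>0\}=\{\zeta>v\}$ and $\N[R_v^{v+q}\ge1]\le c(v)<\infty$; this finiteness is what makes the quantities below meaningful and justifies the Fubini-type exchanges under the $\sigma$-finite measure $\N$.

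For \reff{eq:nexpR}, I would use the Poisson generating function conditionally on $Y_v$ to write
\[
\N\bigl[1-\expp{-\lambda R_v^{v+q}}\bigr]
=\N\Bigl[1-\expp{-c(q)(1-\expp{-\lambda})\,Y_v}\Bigr],
\]
and then read off the right-hand side as $u\bigl(c(q)(1-\expp{-\lambda}),v\bigr)$ directly from definition \reff{eq:defu}. The mean $\N[R_v^{v+q}]=c(q)\,\emb{v}$ in \reff{eq:nrabh} then follows either by differentiating \reff{eq:nexpR} in $\lambda$ at $\lambda=0$, or more directly by taking the $\N$-expectation of the conditional mean, $\N[R_v^{v+q}]=c(q)\,\N[Y_v]$, and invoking \reff{eq:nyt}.

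For the conditional mean in \reff{eq:nrabh}, the point is that any excursion of $H$ above level $v$ reaching level $v+q$ forces $\max(H)\ge v+q$, i.e.\ $\zeta\ge v+q$; hence $R_v^{v+q}=0$ on $\{\zeta\le v+q\}$ and $R_v^{v+q}\ind_{\{\zeta>v+q\}}=R_v^{v+q}$ $\N$-a.e. Using $\N[\zeta>v+q]=c(v+q)$ from \reff{eq:defc} together with the value of $\N[R_v^{v+q}]$ just obtained, this gives
\[
\N[R_v^{v+q}\mid\zeta>v+q]
=\frac{\N\bigl[R_v^{v+q}\bigr]}{c(v+q)}
=\frac{c(q)}{c(v+q)}\,\emb{v}.
\]
I do not expect a genuine obstacle here: the only delicate point is the handling of the conditional laws and the interchanges of integrals under the infinite measure $\N$, all controlled by the bound $\N[R_v^{v+q}\ge1]\le c(v)$ noted in the first step.
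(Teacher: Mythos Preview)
Your proof is correct and follows essentially the same route as the paper: condition on $Y_v$ to see that $R_v^{v+q}$ is Poisson with mean $c(q)Y_v$, use the Poisson generating function together with \reff{eq:defu} to get \reff{eq:nexpR}, and derive \reff{eq:nrabh} from $\N[R_v^{v+q}]=c(q)\,\N[Y_v]$, \reff{eq:nyt}, \reff{eq:defc}, and the observation that $R_v^{v+q}$ vanishes on $\{\zeta\le v+q\}$. Your added remarks on integrability under the $\sigma$-finite measure $\N$ are a welcome bit of care that the paper leaves implicit.
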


\begin{proof}
Since $R_v^{v+q}$ is, conditionally on $Y_v$, a Poisson random variable
with parameter
$c(q)Y_v$, we get thanks to \reff{eq:defu}:
\[
\N\left[1- \expp{-\lambda R_v^{v+q}}\right]=
\N\left[1- \expp{-c(q) (1- \expp{-\lambda}) Y_v}\right]=
u(c(q)(1-\expp{-\lambda}),v).
\]
   Equalities \reff{eq:nrabh} are a consequence of \eqref{eq:nyt}
   and \eqref{eq:defc} and the equality
   $\N[R_v^{v+q}]=\N[Y_v]\N[\zeta\geq q]$.
\end{proof}

We shall need later on other closed formulas for the joint distribution of the
number of ancestors at different time. We first give (in a slightly more
general statement) the conditional distribution of $R_v^{v+q+s}$ knowing
$R_v^{v+q}$.

Let $v,q,s\in (0,+\infty )$. Notice that an ancestor at time $v$ of the population
at time $v+q$ is also an ancestor of the population at time $v+q+s$ with
probability $c(q+s)/c(q)$ and this happens independently of the other
ancestors of the population at times before $v+q$. We deduce the
following Corollary.

\begin{cor}
   \label{cor:binR}
   Let $v,q,s\in (0,+\infty )$.  Conditionally on $(R_u^h; \, u\in
   (0,v], h\in (u,v+q])$,  the   random variable   $R_v^{v+q+s}$  has under  $\N$  a  binomial
   distribution with parameter $(R_v^{v+q}, c(q+s)/c(q))$.
\end{cor}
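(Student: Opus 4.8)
The plan is to realize the stated conditional distribution by Poisson thinning of the excursions of $H$ above level $v$. Write $\mathcal G:=\sigma\bigl(R_u^h;\ u\in(0,v],\ h\in(u,v+q]\bigr)$ and $p:=c(q+s)/c(q)$, and note that $R_v^{v+q}$ is one of the generators of $\mathcal G$ (take $u=v$, $h=v+q$), hence is $\mathcal G$-measurable. By the tower property it is therefore enough to prove that $R_v^{v+q+s}$ is, conditionally on a suitable $\sigma$-field $\mathcal G^+$ containing $\mathcal G$, binomial with parameters $(R_v^{v+q},p)$: for bounded $f$ one then gets $\N[f(R_v^{v+q+s})|\mathcal G]=\N\bigl[\N[f(R_v^{v+q+s})|\mathcal G^+]|\mathcal G\bigr]=\N[g(R_v^{v+q})|\mathcal G]=g(R_v^{v+q})$, where $g(n)$ is the mean of $f$ under $\mathrm{Bin}(n,p)$, which is the assertion. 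The $\sigma$-field I would use is $\mathcal G^+:=\mathcal F_v\vee\mathcal H$, where $\mathcal F_v:=\sigma(H(t)\wedge v;\,t\ge0)$ is generated by the truncation of $H$ below level $v$, and $\mathcal H$ records, for each excursion $H_k$ ($k\in\ck_v$) of $H$ above level $v$, its location (the excursion interval, which is $\mathcal F_v$-measurable up to $\N$-null sets) together with its full shape if $\max(H_k)<q$ and only the flag $\{\max(H_k)\ge q\}$ otherwise.

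The first step is the inclusion $\mathcal G\subseteq\mathcal G^+$. Generators $R_u^h$ with $h\le v$ are $\mathcal F_v$-measurable. For $u\le v<h\le v+q$, an excursion of $H$ above level $u$ reaches the absolute level $h$ if and only if it reaches level $v$ — an $\mathcal F_v$-measurable event — and at least one of the excursions $H_k$ above level $v$ nested inside it has $\max(H_k)\ge h-v$; because $h-v\le q$, such a nested excursion is detected either by its flag $\{\max(H_k)\ge q\}$ (then $\max(H_k)\ge q\ge h-v$) or, if it is short, from its recorded shape. Since the nesting of the above-$v$ excursions inside the above-$u$ ones is encoded in $H(\cdot)\wedge v$, summing the corresponding indicators over the excursions above level $u$ exhibits $R_u^h$ as a $\mathcal G^+$-measurable function. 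Making this excursion-decomposition bookkeeping fully rigorous is the one genuinely delicate point of the argument, and the hypothesis $h\le v+q$ is exactly what allows one to discard the shapes of the tall excursions above level $q$ while keeping $\mathcal G^+$ small enough for the second step.

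The second step identifies the conditional law of $R_v^{v+q+s}$ given $\mathcal G^+$. By the branching property of the height process at level $v$ — the property recalled in Section~\ref{sec:genY}, used with the conditioning strengthened from $(Y_r,\,r\in[0,v])$ to the full truncated path, which is classical — conditionally on $\mathcal F_v$ the point measure $\sum_{k\in\ck_v}\delta_{H_k}$ is Poisson with intensity $Y_v\,\N[dH]$; by the restriction property of Poisson measures, conditionally on $\mathcal F_v$ its part carried by $\{\max(H)\ge q\}$ is an independent Poisson measure with intensity $Y_v\,\N[dH;\,\max(H)\ge q]=Y_v\,c(q)\,\N[dH|\max(H)\ge q]$, where we used $\N[\max(H)\ge r]=c(r)$. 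The conditioning in $\mathcal G^+$ beyond $\mathcal F_v$ consists of the short excursions (independent of the above, hence irrelevant) and of the locations — equivalently, the number $n:=R_v^{v+q}$ — of the tall ones, of whose shapes nothing else is retained; given that their number is $n$, these excursions are i.i.d.\ with law $\N[dH|\max(H)\ge q]$. As every $k$ with $\max(H_k)\ge q+s$ also has $\max(H_k)\ge q$, the variable $R_v^{v+q+s}$ is exactly the number of these $n$ excursions with $\max(H_k)\ge q+s$; hence, conditionally on $\mathcal G^+$, it is binomial with parameters $\bigl(n,\,\N[\max(H)\ge q+s|\max(H)\ge q]\bigr)=\bigl(R_v^{v+q},\,p\bigr)$. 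Combined with the reduction of the first paragraph, this proves the corollary; as indicated, the main obstacle is the measurability inclusion of the second paragraph, the remaining ingredients (branching at a level, Poisson thinning, and the normalization of $\N$) being standard.
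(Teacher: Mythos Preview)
Your proof is correct and is a careful, rigorous elaboration of exactly the idea the paper uses: the paper's entire argument is the one sentence preceding the corollary (each ancestor at level $v$ of the population at level $v+q$ is, independently of the others and of the ancestors at earlier times, also an ancestor at level $v+q+s$ with probability $c(q+s)/c(q)$), and your Poisson-thinning construction with the enlarged $\sigma$-field $\mathcal G^+$ makes precisely this independence statement precise. The paper leaves the measurability bookkeeping implicit, so your second paragraph supplies what the paper omits rather than taking a different route.
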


We recall the decomposition of $H$ before  and after  $T_b=\inf\{t;
H(t)=b\}$   under   $\N$ in  order to give the
conditional distribution of $R_{v+r}^{v+q}$ knowing $R_v^{v+q}$.

On   $\{T_b<+\infty   \}$, let
$((\alpha^\text{(g)}_i,   \beta^\text{(g)}_i),   i\in  I^\text{(g)}   )$
(resp.  $((\alpha^\text{(d)}_i,  \beta^\text{(d)}_i), i\in  I^\text{(d)}
)$) be the  excursion intervals  of $H$ above  its minimum backward  on the
left  of   $T_b$  (resp.  forward   on  the  right  of   $T_b$).  Define
$H^\text{(g)}_i$ for $i\in I^\text{(g)}$ as follows:
\[
H^\text{(g)}_i(t)=    H((t+\alpha_i^\text{(g)})\wedge    \beta_i^\text{(g)})  -
H(\alpha_i^\text{(g)}), \quad t\geq 0,
\]
and $H^\text{(d)}_i$ similarly for $i\in I^\text{(d)}$. It is well
known, see \cite{dlg:rtlpsbp} or \cite{ad:wdlcrtseppnm},
that under $\N[\cdot |\, T_b<+\infty ]$ the measures:
\[
\sum_{i\in I^\text{(g)}}
\delta_{(H(\alpha^\text{(g)}_i),\, H^\text{(g)}_i)} (dt, dH)
\quad\text{and}\quad
\sum_{i\in I^\text{(d)}}
\delta_{(H(\alpha^\text{(d)}_i),\, H^\text{(d)}_i)}(dt, dH)
\]
are independent Poisson point measures with respective intensity:
\[
\ind_{[0,b]}(t) \, \beta dt\N[dH; \max(H)< b-t]
\quad\text{and}\quad
\ind_{[0,b]}(t) \, \beta dt\N[dH].
\]

Let $v,q\in (0,+\infty )$.  By considering the $R_v^{v+q}$ excursions of
$H$ above level $v$ which reach level $v+q$ and the previous
representation for each of those excursions (with $b=q$), we easily deduce the
following result.

\begin{prop}
   \label{prop:Y|R}
   Let $v,q \in (0,+\infty  )$. Conditionally on $(R_v^{v+q}, Y_v)$, the
   process $(Y_{t+v}, t\in [0,q])$ is distributed under $\N$ as $(\tilde
   Y_t, t\in [0,q])$ with:
\[
\tilde Y_t= Y'_t + \sum_{t_i^\text{(g)}\leq t} Y^{\text{(g)},
  i}_{t-t_i^\text{(g)} }+  \sum_{t_i^\text{(d)}\leq t} Y^{\text{(d)},
  i}_{t-t_i^\text{(d)} }
\]
where $Y'$ is distributed according to $\P_{Y_v}(\cdot | \zeta<q)$,  $\sum_{i\in I^\text{(g)}}
\delta_{(t_i^\text{(g)}, Y^{\text{(g)},i})}$ and $\sum_{i\in I^\text{(d)}}
\delta_{(t_i^\text{(d)}, Y^{\text{(d)},i})}$
are independent Poisson point measures independent of $Y'$ with respective
intensity:
\[
\ind_{[0,q]}(t)\, \beta R_v^{v+q}  \N[dY; \zeta<q-t]
\,dt
\quad  \text{and}\quad
\ind_{[0,q]}(t)\, \beta R_v^{v+q}  \N[dY]
\,dt.
\]
\end{prop}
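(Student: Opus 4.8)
The plan is to combine the Poissonian structure of the excursions of $H$ above level $v$, recalled in Section~\ref{sec:genY}, with the decomposition of $H$ before and after $T_b$ stated just above, applied with $b=q$ to each excursion that reaches level $v+q$.

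First I would condition on $\sigma(Y_r,\,r\in[0,v])$, so that $\sum_{k\in\ck_v}\delta_{H_k}(dH)$ is a Poisson point measure with intensity $Y_v\,\N[dH]$ and $R_v^{v+q}=\sum_{k\in\ck_v}\ind_{\{\max(H_k)\geq q\}}$ counts those excursions reaching level $q$. By the restriction (colouring) property of Poisson point measures, conditionally on $(Y_v,R_v^{v+q})$ the excursions $H_k$ with $\max(H_k)\geq q$ are, after translation so as to start at level $0$, i.i.d. with law $\N[dH\,|\,\max(H)\geq q]$ and exactly $R_v^{v+q}$ in number, while the excursions with $\max(H_k)<q$ form, independently, a Poisson point measure with intensity $Y_v\,\N[dH;\max(H)<q]$. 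Using that the level local time $\ell^{\bullet}(H)$ is additive under the excursion decomposition of $H$ above level $v$, I would then write $(Y_{v+t},\,t\in[0,q])$, with $Y_{v+t}=\ell^{v+t}(H)$, as the sum of the contributions $(\ell^{t}(H_k),\,t\in[0,q])$ over all these excursions.

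Next I would treat the two families separately. For the non-reaching excursions, the branching property --- a continuous-state branching process started at $Y_v$ is the superposition over a Poisson point measure with intensity $Y_v\,\N[dY]$ of its excursions, and $\{\zeta<q\}$ is precisely the event that none of these excursions reaches level $q$ --- shows that their superposed contribution is distributed as $(Y_t,\,t\geq0)$ under $\P_{Y_v}(\cdot\,|\,\zeta<q)$, independent of the reaching part; this produces the term $Y'$. For a single reaching excursion, translated to start at level $0$, one has $\max(H)\geq q$, i.e. $T_q<\infty$, and I would apply the decomposition before and after $T_q$ recalled above with $b=q$: its sub-excursions above the running minimum split into a ``left'' Poisson point measure with intensity $\ind_{[0,q]}(t)\,\beta\,dt\,\N[dH;\max(H)<q-t]$ and a ``right'' Poisson point measure with intensity $\ind_{[0,q]}(t)\,\beta\,dt\,\N[dH]$, the two being independent. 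Again by additivity of the level local time (the running minimum carrying no extra mass) and the identification $Y\equiv\ell_{\zeta}(H)$, under which $\max(H)$ corresponds to the lifetime $\zeta$, the contribution of such an excursion to $(Y_{v+t},\,t\in[0,q])$ would then read $\sum_i Y^{(\mathrm{g}),i}_{t-t_i^{(\mathrm{g})}}\ind_{\{t_i^{(\mathrm{g})}\leq t\}}+\sum_i Y^{(\mathrm{d}),i}_{t-t_i^{(\mathrm{d})}}\ind_{\{t_i^{(\mathrm{d})}\leq t\}}$, the constraint $\max(H)<q-t$ turning into $\zeta<q-t$ consistently with $\N[\zeta>r]=\N[\max(H)\geq r]=c(r)$; see \reff{eq:defc}.

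Finally I would superpose over the $R_v^{v+q}$ reaching excursions: the ``left'' families add up to a Poisson point measure with intensity $\ind_{[0,q]}(t)\,\beta\,R_v^{v+q}\,\N[dY;\zeta<q-t]\,dt$, the ``right'' ones to a Poisson point measure with intensity $\ind_{[0,q]}(t)\,\beta\,R_v^{v+q}\,\N[dY]\,dt$, mutually independent and independent of $Y'$, and the sum of the three contributions is exactly the process $\tilde Y$ of the statement. I expect the only real work to be in the bookkeeping of the conditioning and independence --- the Poisson colouring step and the fact that conditioning a Poisson point measure on the number of its atoms falling in a region decouples that region from its complement --- together with the justification that the level local time $\ell^{\bullet}(H)$ is additive along the spine decomposition, so that the spine carries no mass, and the passage between the height-process constraints on $\max(H)$ and the lifetime constraints on $Y\equiv\ell_{\zeta}(H)$.
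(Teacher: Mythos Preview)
Your proposal is correct and follows exactly the approach the paper sketches in the sentence preceding the proposition: decompose the excursions of $H$ above level $v$ into those reaching level $v+q$ and those not, identify the latter with $Y'$ under $\P_{Y_v}(\cdot\,|\,\zeta<q)$, and apply the Williams-type decomposition before and after $T_q$ (with $b=q$) to each of the $R_v^{v+q}$ reaching excursions. The paper gives no further detail beyond that one sentence, so your write-up is a faithful expansion of the intended argument.
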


We deduce the following Corollary on the conditional distribution of
$R_{v+r}^{v+q}$ knowing $R_v^{v+q}$.

\begin{cor}
   \label{cor:R|R}
Let $\lambda, v,q, r \in (0,+\infty )$ with $q>r$. We have:
\[
\N\left[\exp \left( -\lambda R_{v+r}^{v+q} \right)\big| R_v^{v+q}\right]= h(\lambda)^{R_v^{v+q}},
\]
with
\[
h(\lambda)=\expp{-\lambda} \left(1 - \frac{u\left(c(q-r)(1-\expp{-\lambda}), r\right)}{c(r)}\right).
\]
\end{cor}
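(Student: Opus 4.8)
The plan is to read the conditional law of $R_{v+r}^{v+q}$ given $R_v^{v+q}$ directly off of Proposition \ref{prop:Y|R}, which describes the population between levels $v$ and $v+q$ conditionally on $(R_v^{v+q},Y_v)$; at the level of the height process this is the excursion decomposition before and after $T_q$ recalled above, so that the $R$-functionals of the grafted excursions are meaningful.

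The first (and main) step is to check that, conditionally on $(R_v^{v+q},Y_v)$,
\[
R_{v+r}^{v+q}\ \stackrel{\text{(d)}}{=}\ R_v^{v+q}
+\sum_{i:\ t_i^{\text{(d)}}<r} R_{\,r-t_i^{\text{(d)}}}^{\,q-t_i^{\text{(d)}}}\bigl(H^{\text{(d)},i}\bigr),
\]
where $\sum_i\delta_{(t_i^{\text{(d)}},H^{\text{(d)},i})}$ is a Poisson point measure with intensity $\ind_{[0,q]}(t)\,\beta R_v^{v+q}\,\N[dH]\,dt$. Indeed, an excursion of $H$ above level $v+r$ reaching level $v+q$ lies inside one of the $R_v^{v+q}$ excursions above level $v$ reaching $v+q$; inside such an excursion, decomposed as in Proposition \ref{prop:Y|R}, this sub-excursion is either the crossing of the distinguished path from level $v$ to level $v+q$ --- giving the term $R_v^{v+q}$, one unit per surviving lineage --- or it sits inside an excursion grafted to the right at some level $t<r$, in which case it is counted by $R_{r-t}^{q-t}$ of that grafted excursion. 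The excursions grafted to the left and the part distributed as $\P_{Y_v}(\cdot\,|\,\zeta<q)$ become extinct before level $v+q$ and contribute nothing, while right grafts at levels $t\ge r$ belong to the excursion above $v+r$ that carries the backbone and so add nothing new.

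Next, the exponential formula for Poisson point measures together with \reff{eq:nexpR}, which gives $\N[1-\expp{-\lambda R_{r-t}^{q-t}}]=u\bigl(c(q-r)(1-\expp{-\lambda}),r-t\bigr)$, yields, after the change of variable $w=r-t$,
\[
\N\bigl[\expp{-\lambda R_{v+r}^{v+q}}\,\big|\,R_v^{v+q},Y_v\bigr]
=\expp{-\lambda R_v^{v+q}}\,
\exp\Bigl(-\beta R_v^{v+q}\int_0^r u\bigl(c(q-r)(1-\expp{-\lambda}),w\bigr)\,dw\Bigr).
\]
By \reff{eq:int-u} the integral equals $\beta^{-1}\log\bigl(1+\mu(1-\expp{-2\beta\theta r})/(2\theta)\bigr)$ with $\mu=c(q-r)(1-\expp{-\lambda})$, so the conditional Laplace transform is $\bigl(\expp{-\lambda}\bigl(1+\mu(1-\expp{-2\beta\theta r})/(2\theta)\bigr)^{-1}\bigr)^{R_v^{v+q}}$; since this does not depend on $Y_v$, it is also $\N[\expp{-\lambda R_{v+r}^{v+q}}\,|\,R_v^{v+q}]$.

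It then remains to identify $\expp{-\lambda}\bigl(1+\mu(1-\expp{-2\beta\theta r})/(2\theta)\bigr)^{-1}$ with $h(\lambda)$, i.e.\ to verify $\bigl(1+\mu(1-\expp{-2\beta\theta r})/(2\theta)\bigr)^{-1}=1-u(\mu,r)/c(r)$; using $u(\mu,r)=2\theta\mu/\bigl((2\theta+\mu)\expp{2\beta\theta r}-\mu\bigr)$ from \reff{eq:defu} and $c(r)=2\theta/(\expp{2\beta\theta r}-1)$ from \reff{eq:defc}, both sides simplify to $2\theta\expp{2\beta\theta r}/\bigl((2\theta+\mu)\expp{2\beta\theta r}-\mu\bigr)$. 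The hard part is the genealogical bookkeeping of the first step --- recognising that each surviving lineage contributes exactly one ancestor at level $v+r$ and that the only further contributions come from right-grafted subtrees below level $v+r$; once that decomposition is in hand, the rest is a direct application of \reff{eq:nexpR}, \reff{eq:int-u} and a one-line algebraic check.
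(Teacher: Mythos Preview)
Your proof is correct and follows essentially the same approach as the paper: both use Proposition~\ref{prop:Y|R} to decompose $R_{v+r}^{v+q}$ into the $R_v^{v+q}$ backbone crossings plus the contributions of right-grafted excursions below level $r$, then apply \reff{eq:nexpR} and \reff{eq:int-u}. Your write-up is in fact slightly more careful than the paper's, since you spell out the genealogical bookkeeping explicitly and condition first on $(R_v^{v+q},Y_v)$ before observing that the answer is independent of $Y_v$.
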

\begin{proof}
 We have:
\begin{align*}
\N\left[\exp \left( -\lambda R_{v+r}^{v+q} \right)\big| R_v^{v+q}\right]
&=
\exp \left(-\lambda R_v^{v+q} - \beta R_v^{v+q} \int_0^r ds\,
  \N\left[1-\expp{-\lambda R_s^{s+q-r}} \right] \right)\\
&=
\exp \left(-\lambda R_v^{v+q} - \beta R_v^{v+q} \int_0^r ds\,
 u(c(q-r)(1-\expp{-\lambda}), s)\right)\\
&=    \expp{-\lambda R_v^{v+q}} \left(1+c(q-r)(1-\expp{-\lambda}) \frac{1-
    \expp{-2 \beta \theta r}}{2\theta}  \right)^{ - R_v^{v+q}} \\
&= h(\lambda)^{  R_v^{v+q}},
\end{align*}
where we used  Proposition \ref{prop:Y|R} for the first equality,
\reff{eq:nexpR} for the second and \reff{eq:int-u} for the third and
elementary computation for the last.
\end{proof}

We give the following elementary results which are  used  in Section \ref{sec:timereversal}.

\begin{lem}
   \label{lem:Rab}
Let $\lambda, \mu, v, q, s\in (0,+\infty )$. We have with $\kappa_1=\left(1-\expp{-\lambda}
\right)c(q)+\expp{-\lambda}c(q+s)$:
\begin{equation}
   \label{eq:nrabz1}
\N\left[\left(1-\expp{-\lambda R_v^{v+q}}\right)\ind_{\{\zeta<v+q+s\}}\right]
= u(\kappa_1, v)-c(v+q+s);
\end{equation}
for $0<v'<q$, with $\kappa_2=
\left(1-\expp{-\mu}
\right) c(q-v')+ \expp{-\mu} \kappa_1$:
\begin{equation}
   \label{eq:nrabz1.5}
\N\left[\left(1-\expp{-\lambda R_v^{v+q}- \mu
      R_v^{v+q-v'}}\right)\ind_{\{\zeta<v+q+s\}}\right]
= u(\kappa_2, v)-c(v+q+s);
\end{equation}
for $0<v'<v$ with $\kappa_3=(1-\expp{-\mu})c(q+v')+\expp{-\mu} u(\kappa_1, v')$:
\begin{equation}
\label{eq:nrabz2}
 \N\left[\left(1-\expp{-\lambda R_{v}^{v+q}-\mu
       R_{v-v'}^{v+q}}\right)\ind_{\{\zeta<v+q+s\}}\right]=u(\kappa_3,
 v-v')-c(v+q+s);
\end{equation}
 and
\begin{equation}
   \label{eq:nrabz}
\N\left[R_v^{v+q}\ind_{\{\zeta<v+q+s\}}\right]=\left(c(q) -c(q+s)\right)\expp{2\beta \theta v}
\left(\frac{c(v+q+s)}{c(q+s)}\right)^2.
\end{equation}
\end{lem}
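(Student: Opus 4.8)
\emph{Overall approach.} All four identities come from a single device, applied at a suitable level $\ell$: conditionally on $(Y_r,\,0\le r\le\ell)$ the excursions $(H_k)$ of $H$ above level $\ell$ form (as recalled in Section~\ref{sec:genY}) a Poisson point measure with intensity $Y_\ell\,\N[dH]$; under the identification of $Y$ with $(\ell_\zeta^r(H))$ one has $\zeta=\max(H)$ and $\N[\max(H)\ge r]=c(r)$, so on $\{Y_\ell>0\}$ one has $\zeta=\ell+\sup_k\max(H_k)$ and $\{\zeta<\ell+a\}=\bigcap_k\{\max(H_k)<a\}$, while the relevant ancestor counts are sums of functionals of the individual $H_k$. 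The integrand then factorizes over $k$, the exponential formula gives a conditional expectation of the shape $\expp{-\alpha_1 Y_\ell}-\expp{-\alpha_2 Y_\ell}$, and integrating against $\N$ using $\N[1-\expp{-\alpha Y_\ell}]=u(\alpha,\ell)$ (see \eqref{eq:defu}) together with $u(c(t),\ell)=c(\ell+t)$ (see \eqref{eq:uc=c}) yields the formulas; the $-c(v+q+s)$ terms are exactly $-u(c(q+s),\cdot)$. The one delicate point is to keep the two exponentials together at every step: $\N[\expp{-\alpha Y_\ell}]=+\infty$ on its own, since $\{Y_\ell=0\}$ has infinite $\N$-mass.

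\emph{Identities \eqref{eq:nrabz1} and \eqref{eq:nrabz1.5}.} Take $\ell=v$. On $\{Y_v>0\}$ one has $R_v^{v+q}=\sum_k\ind_{\{\max(H_k)\ge q\}}$, $R_v^{v+q-v'}=\sum_k\ind_{\{\max(H_k)\ge q-v'\}}$ and $\{\zeta<v+q+s\}=\bigcap_k\{\max(H_k)<q+s\}$, while on $\{Y_v=0\}$ the integrand vanishes. Hence the integrand of \eqref{eq:nrabz1} is $\prod_k\ind_{\{\max(H_k)<q+s\}}-\prod_k f(H_k)$ with $f(H)=\expp{-\lambda\ind_{\{\max(H)\ge q\}}}\ind_{\{\max(H)<q+s\}}$; the exponential formula gives conditional expectation $\expp{-c(q+s)Y_v}-\expp{-\kappa_1 Y_v}$, where $\kappa_1=\N[1-f(H)]$ is computed by splitting on $\max(H)$ (three ranges cut by $q<q+s$), giving $\kappa_1=(1-\expp{-\lambda})c(q)+\expp{-\lambda}c(q+s)$. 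Integrating against $\N$ gives $u(\kappa_1,v)-u(c(q+s),v)=u(\kappa_1,v)-c(v+q+s)$. For \eqref{eq:nrabz1.5} repeat with $g(H)=\expp{-\lambda\ind_{\{\max(H)\ge q\}}-\mu\ind_{\{\max(H)\ge q-v'\}}}\ind_{\{\max(H)<q+s\}}$ and $\kappa_2=\N[1-g(H)]=(1-\expp{-\mu})c(q-v')+\expp{-\mu}\kappa_1$ (now four ranges, cut by $q-v'<q<q+s$).

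\emph{Identity \eqref{eq:nrabz2}.} Here take $\ell=v-v'$ and use the excursions $(H'_k)$ of $H$ above level $v-v'$. Each excursion of $H$ above level $v$ lies inside a unique $H'_k$ and corresponds there to an excursion of $H'_k$ above relative level $v'$, so $R_v^{v+q}=\sum_k R^{v'+q}_{v'}(H'_k)$, while $R_{v-v'}^{v+q}=\sum_k\ind_{\{\max(H'_k)\ge q+v'\}}$ and $\{\zeta<v+q+s\}=\bigcap_k\{\max(H'_k)<q+v'+s\}$. The exponential formula gives conditional expectation $\expp{-c(q+v'+s)Y_{v-v'}}-\expp{-\kappa Y_{v-v'}}$ with $\kappa=\N[1-\phi(H')]$, $\phi(H')=\expp{-\lambda R^{v'+q}_{v'}(H')-\mu\ind_{\{\max(H')\ge q+v'\}}}\ind_{\{\max(H')<q+v'+s\}}$; integrating against $\N$ produces $u(\kappa,v-v')-c(v+q+s)$, so it remains to identify $\kappa=\kappa_3$. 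Since $\{\max(H')\ge q+v'\}=\{R^{v'+q}_{v'}(H')\ge1\}$, one checks the pointwise identity $1-\phi(H')=\ind_{\{R^{v'+q}_{v'}(H')\ge1\}}-\expp{-\mu}\expp{-\lambda R^{v'+q}_{v'}(H')}\ind_{\{q+v'\le\max(H')<q+v'+s\}}$, and the three $\N$-integrals are evaluated with $\N[\max(H')\ge r]=c(r)$, with \eqref{eq:nexpR} for $\N[1-\expp{-\lambda R^{v'+q}_{v'}}]$, and with \eqref{eq:nrabz1} applied with $v'$ in the role of $v$ (for $\N[(1-\expp{-\lambda R^{v'+q}_{v'}})\ind_{\{\zeta<v'+q+s\}}]$); this yields $\kappa=(1-\expp{-\mu})c(q+v')+\expp{-\mu}u(\kappa_1,v')=\kappa_3$.

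\emph{Identity \eqref{eq:nrabz} and the main obstacle.} Differentiate \eqref{eq:nrabz1} in $\lambda$ at $\lambda=0$: since $\lambda^{-1}(1-\expp{-\lambda R_v^{v+q}})\uparrow R_v^{v+q}$ as $\lambda\downarrow0$, monotone convergence gives $\N[R_v^{v+q}\ind_{\{\zeta<v+q+s\}}]=\partial_\lambda u(\kappa_1,v)\big|_{\lambda=0}$. With $\kappa_1\big|_{\lambda=0}=c(q+s)$, $\partial_\lambda\kappa_1\big|_{\lambda=0}=c(q)-c(q+s)$ and $\partial_\alpha u(\alpha,v)=\N[Y_v\expp{-\alpha Y_v}]$ (differentiation under $\N$, legitimate since $\N[Y_v]=\emb{v}<\infty$ by \eqref{eq:nyt}), this equals $(c(q)-c(q+s))\,\N[Y_v\expp{-c(q+s)Y_v}]$, and \eqref{eq:yrys0} (with $s\to v$, $t\to q+s$) identifies the last factor as $\eb{v}\big(c(v+q+s)/c(q+s)\big)^2$. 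No hard estimate is needed; the only genuine obstacle is the bookkeeping stressed in the first paragraph — never integrating a lone $\expp{-\alpha Y_\ell}$ against $\N$ — together with, in \eqref{eq:nrabz2}, the correct reduction of the inner intensity $\N[1-\phi(H')]$ to \eqref{eq:nexpR} and \eqref{eq:nrabz1}.
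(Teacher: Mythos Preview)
Your proof is correct. The overall strategy---condition on $Y_\ell$ at a well-chosen level $\ell$, use the Poisson structure of the excursions above $\ell$ together with the exponential formula, then integrate using \eqref{eq:defu} and \eqref{eq:uc=c}---is exactly the right device, and your care about never integrating a lone $\expp{-\alpha Y_\ell}$ under $\N$ is well placed.

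The paper's proof uses the same underlying Poisson structure, but packages it through the prepared Corollaries~\ref{cor:binR} and~\ref{cor:R|R}. For \eqref{eq:nrabz1} it writes $\ind_{\{\zeta<v+q+s\}}=\ind_{\{R_v^{v+q+s}=0\}}$, invokes Corollary~\ref{cor:binR} to replace this by $(1-c(q+s)/c(q))^{R_v^{v+q}}$, and then applies \eqref{eq:nexpR}; for \eqref{eq:nrabz2} it additionally uses Corollary~\ref{cor:R|R} (which rests on the Williams-type decomposition of Proposition~\ref{prop:Y|R}) to condition out $R_v^{v+q}$ given $R_{v-v'}^{v+q}$, and then identifies the resulting constant $h_2(0)$ indirectly, by matching against \eqref{eq:nrabz1}. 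Your route bypasses Corollary~\ref{cor:R|R} altogether: working at level $\ell=v-v'$ and reducing the inner intensity $\N[1-\phi(H')]$ directly to \eqref{eq:nexpR} and the already-proved \eqref{eq:nrabz1} (at level $v'$) gives $\kappa_3$ in one step. This is more self-contained and avoids the roundabout identification of $h_2(0)$. The paper's packaging has the advantage of reusing corollaries stated for other purposes; yours has the advantage of a single uniform device. For \eqref{eq:nrabz} both proofs differentiate \eqref{eq:nrabz1} at $\lambda=0$.
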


\begin{proof}
We have:
\begin{align*}
\N\left[\left(1-\expp{-\lambda R_v^{v+q}}\right)\ind_{\{\zeta<v+q+s\}}\right]
&= \N\left[\left(1-\expp{-\lambda R_v^{v+q}}\right)\left(1-
    \frac{c(q+s)}{c(q)} \right)^{R_v^{v+q}} \right]\\
&= u\left(c(q)  \left(1- \expp{-\lambda} \left(
1-    \frac{c(q+s)}{c(q)}
\right)\right), v \right) - u\left(c(q+s), v\right),
\end{align*}
where we used   $\{\zeta<v+q+s\}=\{
R_v^{v+q+s}=0\}$ and Corollary \ref{cor:binR} for the first equality and
\reff{eq:nexpR} twice for the second.
Then use \reff{eq:uc=c}
to get \reff{eq:nrabz1}.

The proof of \reff{eq:nrabz1.5} relies on the same type of arguments and
is left to the reader.

Taking the derivative with respect to $\lambda$
at $\lambda=0$ in \reff{eq:nrabz1} gives \reff{eq:nrabz}.

We prove \reff{eq:nrabz2}.
Set $\expp{-\tilde \lambda}=\expp{-\lambda} \left(1-
    \frac{c(q+s)}{c(q)} \right)=1- \frac{\kappa_1}{c(q)}$. We have:
\begin{align*}
   \N\left[\left(1-\expp{-\lambda R_{v}^{v+q}-\mu
       R_{v-v'}^{v+q}}\right)\ind_{\{\zeta<v+q+s\}}\right]
&=  \N\left[\left(1-\expp{-\lambda R_{v}^{v+q}-\mu
       R_{v-v'}^{v+q}}\right)\left(1-
    \frac{c(q+s)}{c(q)} \right)^{R_v^{v+q}} \right]\\
&= \N\left[1-\expp{-\tilde \lambda R_{v}^{v+q}-\mu
       R_{v-v'}^{v+q}} \right] - u(c(q+s),v) \\
&=\N\left[1-(h_1(\lambda)\expp{-\mu})^{R_{v-v'}^{v+q}}\right] - c(v+q+s) ,
\end{align*}
where we used  $\{\zeta<v+q+s\}=\{
R_v^{v+q+s}=0\}$ and  Corollary \ref{cor:binR} for the first equality,
\reff{eq:nexpR}  for the second, \reff{eq:uc=c}
and Corollary
\ref{cor:R|R} for the third with:
\[
h_1(\lambda)=\expp{-\tilde \lambda} \left(1- \frac{u(c(q)(1-\expp{-\tilde
        \lambda}),v')}{c(v')}\right)
=\left(1-\frac{\kappa_1}{c(q)}\right)
\left(1-\frac{u(\kappa_1,v')}{c(v')}\right).
\]
Then use \reff{eq:nexpR} to get $
\N\left[1-(h_1(\lambda)\expp{-\mu})^{R_{v-v'}^{v+q}}\right]=u(h_2(\mu),v-v')$
with
\[
h_2(\mu)=c(q+v')\left(1-\expp{-\mu}h_1(\lambda) \right).
\]
Taking $\mu=0$, we get using \reff{eq:nexpR} and \reff{eq:uc=c}:
\begin{align*}
u(h_2(0),v-v')&=\N\left[\left(1-\expp{-\lambda
      R_{v}^{v+q}}\right)\ind_{\{\zeta<v+q+s\}}\right]+ c(v+q+s)\\
&= u(\kappa_1,v)\\
&=u(u(\kappa_1, v'), v-v').
\end{align*}
We deduce that $h_2(0)=u(\kappa_1, v')$ and since
\[
h_2(\mu)=c(q+v')(1-\expp{-\mu}) + \expp{-\mu} h_2(0),
\]
we get    \reff{eq:nrabz2}.
\end{proof}

\subsection{Moments of the number of ancestors for the process $Z$}
\label{sec:MomZ}

We easily get the following result using Proposition \ref{prop:M-mart}.
\begin{cor}
   \label{cor:EMtMr}
Let $r\geq t>0$. We have:
\begin{equation}
   \label{eq:MrdMt}
\rE\left[M_{-t}|M_{-r}\right]= \frac{c(t)}{\theta} \left(1-
  \expp{-2\beta\theta(r-t)}\right) + \frac{c(t)}{c(r)}
\expp{-2\beta\theta(r-t)} M_{-r}
\end{equation}
and
\begin{equation}
   \label{eq:MrMt}
\rE\left[M_{-t}M_{-r}\right]= \frac{c(r)}{\theta}\left(1+
  \frac{3}{2}\frac{c(t)}{\theta}\right)= 2\frac{\expp{2\beta\theta
       t}+2}{(\expp{2\beta\theta t} -1)(\expp{2\beta\theta r} -1)} \cdot
\end{equation}
\end{cor}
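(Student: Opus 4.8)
The plan is to deduce \eqref{eq:MrdMt} from the martingale $\tilde M$ of Proposition \ref{prop:M-mart}, by converting the martingale identity into a linear ODE, and then to obtain \eqref{eq:MrMt} by conditioning on $M_{-r}$. Fix $r\ge t>0$; since the case $r=t$ is trivial we assume $r>t$. As $(M_u,u<0)$ is Markov, $\rE[M_{-t}\mid M_{-r}]=\rE[M_{-t}\mid\mathcal F_{-r}]$ for its natural filtration $\mathcal F$. Writing $M_u=\tilde M_u+\beta\int_{-\infty}^u c(|v|)(M_v+2)\,dv$ and using that $\tilde M$ is an $\mathcal F$-martingale together with $\rE[M_u]=c(|u|)/\theta<\infty$ from \eqref{eq:EM} (so Fubini applies), we get
\[
\rE[M_{-t}\mid\mathcal F_{-r}]
=M_{-r}+\beta\int_{-r}^{-t}c(|v|)\bigl(\rE[M_v\mid\mathcal F_{-r}]+2\bigr)\,dv .
\]
By the Markov property $\rE[M_v\mid\mathcal F_{-r}]$ depends on $M_{-r}$ only; setting $\phi(a)=\rE[M_{-a}\mid M_{-r}]$ for $a\in[t,r]$ and changing variables this reads $\phi(t)=M_{-r}+\beta\int_t^r c(a)(\phi(a)+2)\,da$, so $\phi$ solves $\phi'(a)=-\beta c(a)(\phi(a)+2)$ on $[t,r]$ with terminal value $\phi(r)=M_{-r}$.

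To solve it, note that by \eqref{eq:intc} the function $H(a)=-\log(1-\expp{-2\beta\theta a})$ satisfies $H'(a)=-\beta c(a)$ and $\expp{-H(a)}=1-\expp{-2\beta\theta a}$, whence $\tfrac{d}{da}\bigl[\expp{-H(a)}(\phi(a)+2)\bigr]=\expp{-H(a)}\bigl(\phi'(a)+\beta c(a)(\phi(a)+2)\bigr)=0$. Evaluating the constant at $a=r$ gives
\[
\phi(t)+2=\expp{H(t)-H(r)}(M_{-r}+2)
=\frac{1-\expp{-2\beta\theta r}}{1-\expp{-2\beta\theta t}}\,(M_{-r}+2).
\]
Since $\dfrac{1-\expp{-2\beta\theta r}}{1-\expp{-2\beta\theta t}}=\dfrac{c(t)}{c(r)}\expp{-2\beta\theta(r-t)}$ and $2\Bigl(\dfrac{1-\expp{-2\beta\theta r}}{1-\expp{-2\beta\theta t}}-1\Bigr)=\dfrac{c(t)}{\theta}\bigl(1-\expp{-2\beta\theta(r-t)}\bigr)$ (both immediate from $c(a)=2\theta/(\expp{2\beta\theta a}-1)$), the last display is exactly \eqref{eq:MrdMt}.

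For \eqref{eq:MrMt} I would use the tower property: $\rE[M_{-t}M_{-r}]=\rE\bigl[M_{-r}\,\rE[M_{-t}\mid M_{-r}]\bigr]$; plugging in \eqref{eq:MrdMt} and the moments $\rE[M_{-r}]=c(r)/\theta$ and $\rE[M_{-r}^2]=\tfrac{c(r)}{\theta}\bigl(1+\tfrac32\tfrac{c(r)}{\theta}\bigr)$ from \eqref{eq:EM}, and collecting terms with the elementary identity $1+c(a)/2\theta=(1-\expp{-2\beta\theta a})^{-1}$, one gets $\rE[M_{-t}M_{-r}]=\tfrac{c(r)}{\theta}\bigl(1+\tfrac32\tfrac{c(t)}{\theta}\bigr)$; the explicit right-hand side of \eqref{eq:MrMt} then comes from $c(a)/\theta=2/(\expp{2\beta\theta a}-1)$.

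No step is really delicate: the only points needing care are the sign and endpoint bookkeeping in the ODE (time runs towards the present, $-r<-t<0$) and the Fubini justification, both covered by the first-moment bound \eqref{eq:EM}. One could also bypass the ODE: the decomposition \eqref{eq:decomm} of $M_{-t}$ given $M_{-r}$ yields directly $\rE[M_{-t}\mid M_{-r}]=\rE[M^{I[-r,-t]}]+M_{-r}\,\N[R_{r-t}^{r}\mid\zeta>r]$, where $\N[R_{r-t}^{r}\mid\zeta>r]=\tfrac{c(t)}{c(r)}\expp{-2\beta\theta(r-t)}$ and $\rE[M^{I[-r,-t]}]=2\beta\int_0^{r-t}\N[R_s^{s+t}]\,ds=\tfrac{c(t)}{\theta}(1-\expp{-2\beta\theta(r-t)})$ by \eqref{eq:nrabh}; and \eqref{eq:MrMt} also drops out of Lemma \ref{lem:MZ} combined with \eqref{eq:repMr}, since then $\rE[M_{-t}M_{-r}]=\rE[M_0^tM_0^r]=\tfrac{c(r)}{c(t)}\rE[(M_0^t)^2]=\tfrac{c(r)}{c(t)}\rE[M_{-t}^2]$.
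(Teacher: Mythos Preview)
Your proof is correct and follows essentially the same route as the paper: both derive from Proposition~\ref{prop:M-mart} the linear ODE $\phi'(a)=-\beta c(a)(\phi(a)+2)$ with terminal value $M_{-r}$, solve it, and then obtain \eqref{eq:MrMt} by multiplying \eqref{eq:MrdMt} by $M_{-r}$ and taking expectations via \eqref{eq:EM}. The only cosmetic difference is that the paper first subtracts the unconditional mean $h(t)=c(t)/\theta$ to reduce to the homogeneous equation $(g-h)'=-\beta c\,(g-h)$, whereas you solve the inhomogeneous equation directly with the integrating factor $\expp{-H(a)}=1-\expp{-2\beta\theta a}$; the outcome is the same. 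Your closing remarks on the alternative derivations via \eqref{eq:decomm} and via Lemma~\ref{lem:MZ} with \eqref{eq:repMr} are also correct shortcuts.
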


\begin{proof}
   Let $g(t)=\rE\left[M_{-t}|M_{-r}\right]$ and
   $h(t)=\rE\left[M_{-t}\right]=c(t)/\theta$. Using Proposition
   \ref{prop:M-mart}, we get that for $r\geq t>0$:
\[
g(t)=M_{-r}+\int^r_t \beta c(s) (g(s)+2)\; ds
\quad\text{and}\quad
h(t)=h(r)+\int^r_t \beta c(s) (h(s)+2)\; ds .
\]
This implies that $g'(t)-h'(t)=-\beta c(t)(g(t)-h(t))$ and thus:
\[
g(t)-h(t)=\left(M_{-r}-\rE\left[M_{-r}\right]\right)\expp{\beta\int_t^r
  c(s)\; ds}.
\]
Then use \reff{eq:intc} and \reff{eq:EM} to get \reff{eq:MrdMt}.

Taking the expectation in \reff{eq:MrdMt} and using the second part of
\reff{eq:EM}, we get:
\begin{align*}
\rE\left[M_{-t}M_{-r}\right]
&=\frac{c(t)c(r)}{\theta^2} \left(1-
  \expp{2\beta\theta (t-r)}\right)+ \frac{c(t)c(r)}{\theta^2}
\expp{2\beta\theta (t-r)} \left(\frac{\theta}{c(r)} + \frac{3}{2}\right)\\
&=    \inv{2} \frac{c(t)c(r)}{\theta^2}
\left(\expp{2\beta\theta t }+2\right)\\
&= \frac{c(r)}{\theta}\left(1+ \frac{3}{2}\frac{c(t)}{\theta}\right).
\end{align*}
\end{proof}

The following  Lemma  generalizes  \reff{eq:MrMt},
and is  used  in the proof of Proposition \ref{prop:ewswo}.
\begin{lem}\label{lem:m0ms}
Let $r>0$, $s>0$ and $q>0$. For $s+r\geq q$, we have:
\begin{equation}
   \label{eq:MrMsq}
\rE [M_{-r} M_{s-q}^s]=\frac{c(r)}{\theta} \frac{c(q)}{\theta}
\left(\frac{\theta}{c(q-s)}  +\frac{3}{2}\right).
\end{equation}
For $q\geq s+r$, we have:
\begin{equation}
   \label{eq:MrMsq+}
\rE [M_{-r} M_{s-q}^s]=\frac{c(r)}{\theta} \frac{c(q)}{\theta}
\left(\frac{\theta}{c(q-s)}\frac{c(q) }{c(r+s)}  +\frac{3}{2}\right).
\end{equation}
\end{lem}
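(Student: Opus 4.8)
The plan is to compute $\rE[M_{-r}M_{s-q}^s]$ by conditioning on the number of ancestors at the \emph{lower} of the two relevant levels $-r$ and $s-q$ and then expanding the resulting product; the conditioning variable differs according to the sign of $(s-q)-(-r)=s+r-q$, so the two cases of the statement are treated separately.

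\emph{Case $s+r\ge q$ (so $-r\le s-q$).} I would condition on $M_{-r}$. Since $0\in(-r,s)$, every excursion of the genealogy that survives up to time $s$ already survives up to time $0$, so every ancestor at time $s-q$ of the population living at time $s$ coming from an immigration before time $-r$ factors through one of the $M_{-r}$ ancestors at time $-r$ of the population living at time $0$; the remaining such ancestors come from immigration over $(-r,s-q)$. By the same argument that yields \reff{eq:decomm} (using the branching property, Proposition \ref{prop:Y|R} and \reff{eq:repMr}) one obtains
\[
M_{s-q}^s=\sum_{j=1}^{M_{-r}}R_{r+s-q}^{r+s}(\widehat Y^{\,j})+M^{(\mathrm{imm})},
\]
where, conditionally on $M_{-r}$, the $\widehat Y^{\,j}$ are i.i.d.\ $\N[dY\mid\zeta>r]$ and $M^{(\mathrm{imm})}$ is independent of $M_{-r}$ with $\rE[M^{(\mathrm{imm})}]=2\beta\int_{-r}^{s-q}\N[R_{s-q-v}^{s-v}]\,dv$. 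As $R_{r+s-q}^{r+s}\ge1$ forces $\zeta>r+s>r$, one has $\N[R_{r+s-q}^{r+s}\mid\zeta>r]=\N[R_{r+s-q}^{r+s}]/c(r)$, and \reff{eq:nrabh} together with the explicit form \reff{eq:defc} of $c$ turns this into an affine identity $\rE[M_{s-q}^s\mid M_{-r}]=\alpha+\gamma M_{-r}$ with $\alpha,\gamma$ explicit. Multiplying by $M_{-r}$, taking expectations, and inserting the first two moments of $M_{-r}$ from \reff{eq:EM}, an elementary simplification gives \reff{eq:MrMsq}.

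\emph{Case $q\ge s+r$ (so $s-q\le-r$).} Now $s-q$ lies below $-r$, so I would instead condition on $N:=M_{s-q}^{-r}$, the number of ancestors at time $s-q$ of the population living at time $-r$, which by stationarity and \reff{eq:M=zeta} is, given $\sigma(Z_v:v\le s-q)$, Poisson with parameter $c(q-r-s)Z_{s-q}$. Both $M_{-r}$ and $M_{s-q}^s$ can be read off the $N$ sub-populations $\widehat Y^{\,1},\dots,\widehat Y^{\,N}$ attached to these ancestors (conditionally on $N$, i.i.d.\ $\N[dY\mid\zeta>q-r-s]$): because $-r<0<s$ all lie above time $s-q$,
\[
M_{s-q}^s=\sum_{j=1}^{N}\ind_{\{\zeta(\widehat Y^{\,j})\ge q\}},\qquad
M_{-r}=\sum_{j=1}^{N}R_{q-r-s}^{q-s}(\widehat Y^{\,j})+M^{(\mathrm{imm})},
\]
with $M^{(\mathrm{imm})}$ the contribution to $M_{-r}$ of the immigration over $(s-q,-r)$, independent of the $\widehat Y^{\,j}$ and of $M_{s-q}^s$, and $\rE[M^{(\mathrm{imm})}]=2\beta\int_{s-q}^{-r}\N[R_{-r-v}^{-v}]\,dv$. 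Expanding $M_{-r}M_{s-q}^s$ conditionally on $N$ into off-diagonal ($j\ne k$) and diagonal ($j=k$) parts, the off-diagonal part uses only independence and the marginals $\N[R_{q-r-s}^{q-s}]$ and $\N[\zeta\ge q]=c(q)$ from \reff{eq:nrabh} and \reff{eq:defc}, while the diagonal part needs the cross-functional $\N[R_{q-r-s}^{q-s}\ind_{\{\zeta\ge q\}}]$; I would get the latter by writing $\ind_{\{\zeta\ge q\}}=1-\ind_{\{\zeta<q\}}$ and applying \reff{eq:nrabh} and \reff{eq:nrabz} of Lemma \ref{lem:Rab} with its parameters $q$ and $s$ equal to the present $r$ and $s$ and its $v$ equal to $q-r-s$. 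Taking expectations in $N$ using $\rE[N]$ and $\rE[N^2]$ from \reff{eq:momentz} and simplifying with \reff{eq:defc} then yields \reff{eq:MrMsq+}. The two formulas coincide at $q=s+r$, which covers the boundary case, and letting $s\to0$ one recovers \reff{eq:MrMt} as a sanity check.

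The main obstacle is the combinatorial bookkeeping in the second case: one has to verify that $N=M_{s-q}^{-r}$ is the correct quantity to condition on, that a single sub-population $\widehat Y^{\,j}$ contributes exactly $R_{q-r-s}^{q-s}(\widehat Y^{\,j})$ to $M_{-r}$ and exactly $\ind_{\{\zeta(\widehat Y^{\,j})\ge q\}}$ to $M_{s-q}^s$, that the sole extra term in $M_{-r}$ is the genuinely independent immigration contribution, and to match the level and span parameters of the cross-functional against \reff{eq:nrabz} without slippage; the ensuing algebraic simplifications, in both cases, are tedious but routine.
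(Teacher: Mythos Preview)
Your proposal is correct. In the case $s+r\ge q$ you follow exactly the paper's argument: condition on $M_{-r}$, decompose $M_{s-q}^s$ as the sum of an immigration term over $(-r,s-q)$ and of contributions $R_{r+s-q}^{r+s}$ from each of the $M_{-r}$ surviving sub-populations, and then use \reff{eq:nrabh} and \reff{eq:EM}.

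In the case $q\ge s+r$ your route differs from the paper's. The paper conditions on the pair $(M_{s-q}^s,Z_{s-q})$ and decomposes $M_{-r}$ into \emph{three} parts: the immigration over $(s-q,-r)$; the contribution of the $M_{s-q}^s$ sub-populations that survive to time $s$ (each distributed as $R_{q-s-r}^{q-s}$ under $\N[\,\cdot\mid\zeta>q]$); and the Poissonian contribution of the remaining mass $Z_{s-q}$ (intensity $Z_{s-q}\,\N[dY;\zeta<q]$). Multiplying by $M_{s-q}^s$ then requires the second moment of $M_{s-q}^s$ together with the cross-moment $\rE[M_{s-q}^s Z_{s-q}]$. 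You instead condition on the single variable $N=M_{s-q}^{-r}$ and express \emph{both} $M_{-r}$ and $M_{s-q}^s$ as functionals of the same i.i.d.\ family $(\widehat Y^{\,j})_{j\le N}$ with law $\N[\,\cdot\mid\zeta>q-s-r]$, so that the product splits into diagonal and off-diagonal parts; the only moments of $N$ needed are $\rE[N]$ and $\rE[N(N-1)]$, available from \reff{eq:EM} by stationarity. Both approaches ultimately rest on the same cross-functional \reff{eq:nrabz}, and the algebra is of comparable length; your version is slightly cleaner in that it conditions on a single scalar rather than on two and avoids the Poisson-point-measure description of the residual population.
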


\begin{proof}
First, we consider the case $s+r\geq q$. Given $M_{-r}$,
$M^s_{s-q}$ can be decomposed in two parts:
\[
M_{s-q}^s = M^{I[-r, s-q]} + \sum_{j = 1}^{M_{-r}}\tilde{M}_{s+r-q}^{(s+r), j},
\]
where $M^{I[-r, s-q]}$ is the number of ancestor coming from the
immortal individual  on the interval $(-r, s-q)$, and $\tilde
M_{s+r-q}^{(s+r), j}$
represents the number of ancestor
  generated by one of the ancestors at time $-r$. More precisely, we have
\begin{enumerate}
\item[(i)] $M^{I[-r,s -q]}$ is the  number of ancestors at time $s-q$ of
  the  population  living  at  time  $s$  corresponding  to  all  the
  populations  $Y^i$ (see  definition \reff{eq:defcn})  with immigration
  time  $t_i$  belonging  to  $(-r,s  -q)$  and  $M^{I[-r,  s  -q]}$  is
  independent of $M_{-r}$.
\item[(ii)] $(\tilde M_{s+r-q}^{(s+r), j}, j\in \N^*)$ are independent,
  independent of $M_{-r}$ and are  distributed as
  $R_{s+r-q}^{s+r}$ under $\N[dY|\zeta>r]$.
\end{enumerate}
We deduce that:
\[
\rE[M_{s-q}^s|M_{-r}]= 2\beta \int_0^{s+r-q} \N[R_t^{t+q}]\, dt + M_{-r}
\N[R_{s+r-q}^{s+r} |\zeta>r].
\]
Using \reff{eq:nrabh} and \reff{eq:MrMt}, elementary computations give:
\[
\rE [M_{-r} M_{s-q}^s]=\rE\left[M_{-r}\rE[M_{s-q}^s|M_{-r}]\right]
=\frac{2(\eb{(q-s)}+2)}{(\eb{q}-1)(\eb{r}-1)}\cdot
\]
This gives \reff{eq:MrMsq}.

Second, we  consider the case  $q\geq s+r$.  Given $(  M_\ell^s, Z_\ell,
\ell\leq s-q)$,  the number of  ancestors $M_{-r}$ can be  decomposed in
three parts:
\[
M_{-r}=M^{I[s -q, -r]}+ \sum_{j=1}^{M^s_{s-q}} \tilde M_{q-s-r}^{(q-s),
  j}+ \sum_{i'} R_{q-s-r}^{(q-s),i'}(\hat Y^{i'}),
\]
where $M^{I[s -q, -r]}$ is the number of ancestor coming from the
immigration on the interval $( s-q,-r)$,  $\tilde M_{q-s-r}^{(q-s), j}$
represents the number of ancestor
  generated by one of the $M^s_{s-q}$ ancestors at time $s-q$, and
  $\hat Y^{i'}$ is a population generated from one of the individuals
  at time $s-q$ (among the population of size $Z_{s-q}$) which dies
  before  time $s$ (that is with lifetime less than $q$). More precisely, we have
\begin{enumerate}
   \item[(i)] $M^{I[s -q, -r]}$ is the  number of ancestors at time $-q$ of
  the  population  living  at  time  $0$  corresponding  to    all  the
  populations  $Y^i$ (see  definition \reff{eq:defcn})  with immigration
  time  $t_i$  belonging  to  $(s  -q, -r)$  and  $M^{I[  s  -q, -r]}$  is
  independent of $(  M_\ell^s, Z_\ell,
\ell\leq s-q)$.
\item[(ii)]  $(\tilde M_{q-s-r}^{(q-s), j}, j\in \N^*)$ are independent,
  independent of $(  M_\ell^s, Z_\ell,
\ell\leq s-q)$ and are  distributed as
  $R_{q-s-r}^{q-s}$ under $\N[dY|\zeta>q]$.
\item[(iii)] Conditionally on $(  M_\ell^s, Z_\ell,
\ell\leq s-q)$, $\sum_{i'} \delta_{\hat Y^{i'}}$ is a Poisson point
measure with intensity $Z_{s-q} \N[dY, \zeta< q]$.
\end{enumerate}
We deduce that:
\[
\rE [M_{-r}|\sigma( M_\ell^s, Z_\ell, \ell\leq s-q)]
= \rE[M^{I[s -q, -r]}] + M_{s-q}^s \N[R_{q-s-r}^{q-s}|\zeta>q]+
Z_{s-q} \N\left[R_{q-s-r}^{q-s}\ind_{\{ \zeta< q\}}\right] .
\]
We have:
\[
\rE[M^{I[s -q, -r]}]=2 \beta \int_{s-q}^{-r} dv\; \N[R_{-r-v}^{-v}]=
\frac{c(r)}{\theta}(1-\emb{(q-s-r)}).
\]
Using \reff{eq:nrabz}, we get:
\[
\N\left[R_{q-s-r}^{q-s}\ind_{\{\zeta< q\}}\right]
=(c(r)-c(r+s))
\frac{\psi(c(q))}{\psi(c(s+r))},
\]
as well as:
\begin{align*}
\N[R_{q-s-r}^{q-s}|\zeta>q]
&=c(q)^{-1}\N[R_{q-s-r}^{q-s}\ind_{\{\zeta> q\}}]\\
&=\frac{c(r)}{c(q)}\emb{(q-s-r)}-\frac{c(r)-c(r+s)}{c(q)}
\frac{\psi(c(q))}{\psi(c(s+r))}\cdot
\end{align*}
Then elementary computation yields:
\begin{align*}
\rE [M_{-r} M_{s-q}^s]
&=\rE \left[M_{s-q}^s
\rE [M_{-r}|\sigma( M_\ell^s, Z_\ell, \ell\leq s-q)]\right]\\
&=\frac{2(\eb{(r+s)}+2)}{(\eb{r}-1)(\eb{q}-1)}
-\frac{c(r)-c(r+s)}{\theta}\frac{\psi(c(q))}{\psi(c(s+r))}\cdot
\end{align*}
Then it is straightforward to get the desired result.
\end{proof}

\end{document}